\numberwithin{equation}{section}
\newtheorem{theorem}{Theorem}[section]
\newtheorem{proposition}[theorem]{Proposition}
\newtheorem{lemma}[theorem]{Lemma}
\newtheorem{corollary}[theorem]{Corollary}
\theoremstyle{definition}
\newtheorem{assumption}[theorem]{Assumption}
\theoremstyle{remark}
\newtheorem{remark}[theorem]{Remark}
\newcommand{\tref}[2][]
	{\ifx#1""{Theorem~\textup{\ref{#2}}}%
	\else{Theorems~\textup{\ref{#1}} and~\textup{\ref{#2}}}\fi}
\newcommand{\trefs}[4][]
	{\ifx#1""{Theorems~\textup{\ref{#2}},~\textup{\ref{#3}} and~\textup{\ref{#4}}}%
	\else{Theorems~\textup{\ref{#1}},~\textup{\ref{#2}},~\textup{\ref{#3}} and~\textup{\ref{#4}}}\fi}
\newcommand{\pref}[2][]
	{\ifx#1""{Proposition~\textup{\ref{#2}}}%
	\else{Propositions~\textup{\ref{#1}} and~\textup{\ref{#2}}}\fi}
\newcommand{\prefs}[4][]
	{\ifx#1""{Propositions~\textup{\ref{#2}},~\textup{\ref{#3}} and~\textup{\ref{#4}}}%
	\else{Propositions~\textup{\ref{#1}},~\textup{\ref{#2}},~\textup{\ref{#3}} and~\textup{\ref{#4}}}\fi}
\newcommand{\cref}[2][]
	{\ifx#1""{Corollary~\textup{\ref{#2}}}%
	\else{Corollaries~\textup{\ref{#1}} and~\textup{\ref{#2}}}\fi}
\newcommand{\lref}[2][]
	{\ifx#1""{Lemma~\textup{\ref{#2}}}%
	\else{Lemmas~\textup{\ref{#1}} and~\textup{\ref{#2}}}\fi}
\newcommand{\lrefs}[4][]
	{\ifx#1""{Lemma~\textup{\ref{#2}}, \textup{\ref{#3}} and~\textup{\ref{#4}}}%
	\else{Lemmas~\textup{\ref{#1}}, \textup{\ref{#2}}, \textup{\ref{#3}} and \textup{\ref{#4}}}\fi}
\newcommand{\rref}[2][]
	{\ifx#1""{Remark~\textup{\ref{#2}}}%
	\else{Remark~\textup{\ref{#1}} and~\textup{\ref{#2}}}\fi}
\newcommand{\iref}[2][]
	{\ifx#1""{\textup{(\ref{#2})}}%
	\else{\textup{(\ref{#1})}~and~\textup{(\ref{#2})}}\fi}
\newcommand{\aref}[2][]
	{\ifx#1""{Assumption~\textup{\ref{#2}}}%
	\else{Assumption~\textup{\ref{#1}} and~\textup{\ref{#2}}}\fi}
\newcommand{\secref}[2][]
	{\ifx#1""{Section~\textup{\ref{#2}}}%
	\else{Section~\textup{\ref{#1}} and~\textup{\ref{#2}}}\fi}
\newcommand{\RealNum}{\mathbf{R}}
\newcommand{\NaturalNum}{\mathbf{N}}
\newcommand{\Sym}[2]{\mathrm{Sym}_{#1}(#2)}
\newcommand{\Mat}[2]{\mathrm{Mat}_{#1}(#2)}
\newcommand{\idMat}{I}
\newcommand{\probSp}{\Omega}
\newcommand{\sigmaField}{\mathcal{F}}
\newcommand{\prob}{\boldsymbol{P}}
\newcommand{\expect}{\boldsymbol{E}}
\newcommand{\CM}{\mathfrak{H}}
\newcommand{\SobSp}[3][]{\mathcal{D}^{{#2},{#3}}}
\newcommand{\indicator}[1]{\mathtt{1}_{#1}}
\begin{document}

\title{Malliavin Calculus for Non-colliding Particle Systems}

\author[N. Naganuma]{Nobuaki Naganuma}
\address[N. N.]{Graduate School of Engineering Science, Osaka University, Toyonaka, Osaka 560-8531, Japan}
\email{naganuma@sigmath.es.osaka-u.ac.jp}

\author[D. Taguchi]{Dai Taguchi}
\address[D. T.]{Graduate School of Engineering Science, Osaka University, Toyonaka, Osaka 560-8531, Japan}
\email{dai.taguchi.dai@gmail.com}

\subjclass[2010]{Primary: 60H07, Secondary: 82C22, 65C30}
\keywords{Dyson Brownian motion, Hyperbolic particle system, Non-colliding particle system, Malliavin calculus, Non-degeneracy}

\date{December 26th, 2018}

\begin{abstract}
In this paper, we use Malliavin calculus to show the existence and continuity of density functions of $d$-dimensional non-colliding particle systems such as hyperbolic particle systems and Dyson Brownian motion with smooth drift. For this purpose, we apply results proved by Florit and Nualart (1995) and Naganuma (2013) on locally non-degenerate Wiener functionals.
\end{abstract}

\maketitle

\section{Introduction}
\subsection{Background}

In the theory of stochastic differential equations (SDEs),
the existence and regularity/smoothness of density functions
with respect to the Lebesgue measure of solutions of SDEs
is a major research topic for which there are many results and methodologies of study.
Let us comment on
the approach from parabolic partial differential equations (PDEs)
and the approach from stochastic analysis.

Regarding the approach from parabolic PDEs,
a fundamental solution of a PDE is known to exist
if its coefficients are bounded and H\"older continuous
and if its diffusion coefficient is uniformly elliptic
(see Friedman \cite{Friedman1964}).
The fundamental solution is a density function of a solution
to the corresponding SDE by the Feynman--Kac formula.
The idea of the proof is based on Levi's parametrix method (perturbation of the drift),
which has been extended to a solution of an SDE with an $L^p$-valued drift coefficient (Portenko \cite{Portenko1990})
and a path-dependent drift coefficient (Kusuoka \cite{Kusuoka2017} and Makhlouf \cite{Makhlouf2016}).
The parametrix method leads to the differentiability (resp.\ H\"older continuity)
of the density function with respect to the initial variable (resp.\ terminal variable).

On the other hand, as an approach from stochastic analysis,
Malliavin calculus is a powerful tool,
and it is well-known that, under the H\"ormander condition,
a solution of an SDE with infinitely differentiable coefficients
has a smooth density function.
Because there exists a criterion that
non-degenerate Wiener functionals in the Malliavin sense
admit smooth density functions with respect to the Lebesgue measure,
we obtain the result by showing that the solution is non-degenerate.
For general theory of Malliavin calculus and applications for solutions of SDEs,
see \cite{MatsumotoTaniguchi2017,Shigekawa2004,Nualart2006,IkedaWatanabe1989}.
However, the criterion cannot be applied to solutions of SDEs with singular coefficients.
A squared Bessel process is a typical example of such SDEs;
its diffusion coefficient is singular at the origin although the coefficient is locally smooth.
Naganuma \cite{Naganuma2013} proposed an approach to access the squared Bessel process. He refined the notion of the local non-degeneracy of Wiener functionals introduced by Florit and Nualart \cite{FloritNualart1995}
and showed that solutions of squared-Bessel-type SDEs (and therefore of Bessel-type SDEs)
admit continuous density functions (see \cite[Theorem~2.2]{Naganuma2013}).
Note that inverse moments of the processes play a crucial role in the argument.
As another approach, De Marco \cite{DeMarco2011} showed the local existence of smooth density functions
of solutions of SDEs if their coefficients are locally smooth.

In this paper, we consider non-colliding particle systems of Dyson type and show
that it admits a continuous density function.
A typical example of such a system is the $\beta$-Dyson Brownian motion,
which describes the dynamics of non-colliding Brownian particles.
More precisely, for $d\geq 2$ and $T>0$,
the $d$-dimensional Dyson Brownian motion
$X=\{X(t)=(X_1(t),\dots,X_d(t))^\top\}_{0\leq t\leq T}$
with a parameter $\beta\geq 1$ is defined by a unique solution of an SDE
\begin{align}\label{eq_DysonBM}
	\left\{
		\begin{aligned}
			dX_i(t)
			&=
				\frac{\beta}{2}
				\sum_{k;k\neq i}
					\frac{dt}{X_i(t)-X_k(t)}
				+
				dW_i(t),
			\quad
			i=1,\dots,d,\\
			X(0)
			&=
				\bar{x},
		\end{aligned}
	\right.
\end{align}
where $\bar{x}$ is a deterministic initial condition belonging to
$
	\Delta_d
	=
	\{
		(x_1,\dots,x_d)^\top\in\RealNum^d;
		x_1<\dots<x_d
	\}
$
and
$W=\{W(t)=(W_1(t),\dots,W_d(t))^\top\}_{0\leq t\leq T}$
is a $d$-dimensional standard Brownian motion
on the canonical probability space $(\probSp,\sigmaField,\prob)$
with a filtration $\{\sigmaField(t)\}_{0\leq t\leq T}$ satisfying the usual conditions.
The parameter $\beta$ is called the inverse temperature.
It is known that the process $X$ with $\beta=1,2,4$ is obtained as
an eigenvalue process of some matrix-valued Brownian motion
(\cite{Dyson1962a}, \cite{AndersonGuionnetZeitouni2010}, \cite{Katori2015}, \cite{Mehta2004}).
Further, $X$ with $\beta=2$ is obtained as a standard Brownian motion with the non-colliding condition
or Doob's $h$-transform of an absorbing Brownian motion in $\Delta_d$
(\cite{Grabiner1999}, \cite{Biane1995}).
Therefore, this process is studied using various methods.

The existence of density functions of Dyson Brownian motion has been studied in various ways.
Because the Dyson Brownian motion with parameter $\beta=1,2,4$
is obtained as an eigenvalue process of a matrix-valued Brownian motion,
we see the existence of density functions and explicit forms
(see \cite[Theorem~2.5.2]{AndersonGuionnetZeitouni2010}, \cite[Theorem~3.3.1]{Mehta2004}).
For $\beta=2$, we can also derive the density function in the context of Karlin-McGregor formula, Brownian motion with the non-colliding condition and Doob's $h$-transform
(see \cite[Theorem~3.3 and Equation~(3.32)]{Katori2015}).
Meanwhile, as a general framework for an analytical approach to Dyson Brownian motion, a (radial) Dunkl process has been studied.
The Dunkl process is a c\`adl\`ag Markov process with martingale property and its infinitesimal generator is the Dunkl Laplacian, which is a differential operator with a root system in $\RealNum^d$
(for more details, see \cite{GraczykRoslerYor2008}).
Moreover, the semigroup density of the Dunkl process exists and can be express by the normalized spherical Bessel functions (see \cite{Rosler1998}).
The radial part of Dunkl process is a solution of some stochastic differential equation
(see \cite[Corollary 6.6]{GraczykRoslerYor2008}) valued in the fundamental Weyl chamber of associated root system and its semigroup density also exists and have some representation (and therefore a Dyson Brownian motion has a density for all $\beta \geq 1$ because it is a radial Dunkl process with the root system type $A$).
It is worth noting that in the theory of Dunkl process, the diffusion coefficient $\sigma$ must be the identity matrix.

The aim of the present paper is to apply Malliavin calculus
to non-colliding particle systems such as hyperbolic particle systems (see \cite{CepaLepingle2001})
and Dyson Brownian motion with smooth drift.
We prove that under some moment condition (see \aref{ass_1515645743} below),
solutions of non-colliding particle systems
admit continuous density functions with respect to the Lebesgue measure.
We use the result by Florit and Nualart \cite{FloritNualart1995}
and Naganuma \cite{Naganuma2013} to deal with
the singularity of the drift coefficient $(x_i-x_k)^{-1}$.
As with Bessel-type processes,
the inverse moment of $X_i(t)-X_k(t)$ plays an important role in the proof.
We show the integrability by using the Girsanov transformation for non-colliding particle systems,
which was inspired by Yor \cite{Yor1980a}
(this approach can also be found in \cite{Chybiryakov2006}).

\subsection{Main Result}\label{sec_MainResults}
We treat an extension of \eqref{eq_DysonBM} and consider the existence
and continuity of the density function of its solution at time $t>0$.

We consider a constant diffusion coefficient
$\sigma=(\sigma_{ij})_{1\leq i,j\leq d}$ that is an invertible matrix.
Next, we introduce a drift coefficient $f=(f_1,\dots,f_d)^\top \colon\Delta_d\to\RealNum^d$
consisting of a singular part $a$ and a smooth part $b$ as follows.
Let $\alpha=(\alpha_{ik})_{1\leq i,j\leq d}$ be
a symmetric matrix with non-negative components.
For $i>j$ (resp.,\ $i<j$), we set $I_{ij}=(0,\infty)$ (resp.,\ $I_{ij}=(-\infty,0)$).
We define $\phi_{ij}\colon I_{ij}\to I_{ij}$ by $\phi_{ij}(\xi)=\alpha_{ij}/\xi$ for every $i\neq j$
and $a=(a_1,\dots,a_d)^\top\colon\Delta_d\to\RealNum^d$ by
\begin{align}\label{eq_1516869956}
	a_i(x)
	=
		\sum_{k;k\neq i}
			\phi_{ik}(x_i-x_k)
	=
		\sum_{k;k\neq i}
			\frac{\alpha_{ik}}{x_i-x_k}.
\end{align}
Let $b=(b_1,\dots,b_d)^\top\colon\RealNum^d\to\RealNum^d$ be a smooth function
which has bounded derivatives of all orders ($b$ itself need not be bounded).
We set $f_i=a_i+b_i$.

For such coefficients $f$ and $\sigma$,
we consider a solution $X=\{X(t)\}_{0 \leq t \leq T}$ to an SDE
\begin{align}\label{eq_NonColPartSys}
	\left\{
		\begin{aligned}
			dX_i(t)
			&=
				f_i(X(t))\,
				dt
				+
				\sum_{k=1}^d
					\sigma_{ik}\,
					dW_k(t),
			\quad
			i=1,\dots,d,\\
			X(0)
			&=
				\bar{x}
				\in \Delta_d
		\end{aligned}
	\right.
\end{align}
as an extension of \eqref{eq_DysonBM}.
\begin{assumption}
	\phantomsection
	\label{ass_1515645743}
	Let $0<T<\infty$ be fixed.
	\begin{enumerate}
		\item	\label{item_1515735667}
				Existence and uniqueness of a strong solution $X$ to SDE \eqref{eq_NonColPartSys}
				such that $\prob(X(t)\in\Delta_d \text{\ for all\ } 0\leq t\leq T)=1$.
		\item	\label{item_1515740401}
				For some $q>d$, it holds that
				\begin{align}\label{ass_inv_moment}
					\max_{\substack{1\leq i,j\leq d\\i\neq j}}
					\sup_{0\leq t\leq T}
						\expect[|X_i(t)-X_j(t)|^{-6q}]
					<
						\infty.
				\end{align}
	\end{enumerate}
\end{assumption}

The following is our main theorem.
\begin{theorem}\label{thm_1512205183}
	Let $0<t\leq T$.
	Under \aref{ass_1515645743},
	the solution $X(t)$ admits a continuous density function
	with respect to the Lebesgue measure.
\end{theorem}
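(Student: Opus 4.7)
The strategy is to apply the criterion of \cite[Theorem~2.2]{Naganuma2013} on continuous densities of locally non-degenerate Wiener functionals to $X(t)$. The obstacle to direct use of classical Malliavin theory is that the singular drift $a_i(x)=\sum_{k\neq i}\alpha_{ik}/(x_i-x_k)$ and its derivatives $\partial_j a_i(x)=-\alpha_{ij}/(x_i-x_j)^2$ (for $i\neq j$) blow up on the boundary of $\Delta_d$; neither $f$ nor $\nabla f$ is globally Lipschitz or bounded. \aref{ass_1515645743}(\ref{item_1515740401}) is designed precisely to control these singularities in a pathwise $L^p$ sense.

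For each $\varepsilon>0$ I localize on the open set $U_{2\varepsilon}=\{x\in\Delta_d:\min_{i<j}(x_j-x_i)>2\varepsilon\}$, on which $f$ and all derivatives are bounded. Using a smooth cutoff, construct a globally defined smooth vector field $f^\varepsilon\colon\RealNum^d\to\RealNum^d$, with bounded derivatives of all orders, such that $f^\varepsilon\equiv f$ on $U_{2\varepsilon}$. Let $X^\varepsilon$ solve the SDE with drift $f^\varepsilon$, diffusion $\sigma$ and initial condition $\bar x$, driven by the same $W$. Since $\sigma$ is invertible and $f^\varepsilon$ is smooth with bounded derivatives, classical Malliavin calculus gives that $X^\varepsilon(t)$ is infinitely Malliavin differentiable, with Malliavin covariance matrix whose inverse has moments of all orders. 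By pathwise uniqueness for the modified SDE, $X$ and $X^\varepsilon$ coincide up to the first exit time of $X$ from $U_{2\varepsilon}$; in particular they coincide throughout $[0,t]$ on the event
\[
	\Omega_\varepsilon
	=
	\{\inf_{0\leq s\leq t}\min_{i<j}(X_j(s)-X_i(s))>2\varepsilon\}.
\]
By \aref{ass_1515645743}(\ref{item_1515735667}) and path continuity of $X$, $\prob(\Omega_\varepsilon)\uparrow 1$ as $\varepsilon\downarrow 0$.

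To conclude, I invoke \cite[Theorem~2.2]{Naganuma2013} with localizing family $\{\Omega_\varepsilon\}$. Beyond the identification $X=X^\varepsilon$ on $\Omega_\varepsilon$, this requires suitable moment bounds for $DX^\varepsilon(t)$ and for $(\det\gamma_{X^\varepsilon(t)})^{-1}$ on $\Omega_\varepsilon$, matching the integrability supplied by \aref{ass_1515645743}(\ref{item_1515740401}). Since $\sigma$ is constant and invertible, $D_s X^\varepsilon(t)=J^\varepsilon(t)J^\varepsilon(s)^{-1}\sigma$, where $dJ^\varepsilon(t)=\nabla f^\varepsilon(X^\varepsilon(t))J^\varepsilon(t)\,dt$; on $\Omega_\varepsilon$ this coincides with the corresponding flow $J$ for the original drift $f$. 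Gronwall's inequality controls $\|J(t)J(s)^{-1}\|$ exponentially in $\int_0^t\|\nabla f(X(r))\|\,dr$, and the H\"older--Fubini estimate
\[
	\expect\!\Bigl[\Bigl(\int_0^t |\partial_j a_i(X(r))|\,dr\Bigr)^p\Bigr]
	\leq
	t^{p-1}\int_0^t \expect\bigl[|X_i(r)-X_j(r)|^{-2p}\bigr]\,dr
\]
is finite for $p\leq 3q$ by \aref{ass_1515645743}(\ref{item_1515740401}). Combined with the invertibility of $\sigma$, these estimates deliver enough positive and negative moments of $\gamma_{X^\varepsilon(t)}$ on $\Omega_\varepsilon$ to meet the hypotheses of the criterion.

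The main obstacle is the book-keeping in the last step: higher-order Malliavin derivatives of $X^\varepsilon(t)$ generate higher powers of $(X_i-X_j)^{-1}$, so the negative moment bound on $\det\gamma_{X^\varepsilon(t)}$ must be propagated through a cascade of H\"older inequalities. The exponent $6q$ in \aref{ass_1515645743}(\ref{item_1515740401}) is calibrated so that after these nested estimates the residual integrability index still exceeds $d$; this threshold $q>d$ is exactly what the Sobolev embedding underlying \cite[Theorem~2.2]{Naganuma2013} requires in order to upgrade the existence of a density to \emph{continuity}.
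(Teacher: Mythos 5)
Your localization idea — introducing a smoothed drift $f^{\varepsilon}$ agreeing with $f$ off a neighbourhood of $\partial\Delta_d$ and matching $X$ to $X^{\varepsilon}$ on $\{\min_{i<j}(X_j-X_i)>2\varepsilon\}$ — is indeed the starting point of the paper (it corresponds to \lref{lem_1515739775} and \secref{sec_MalliavinDiff}). But from there your argument has a genuine gap: you never construct the object that \pref{prop_1511420148} actually demands, namely an $\CM^d$-valued functional $U\in\SobSp{1}{q}(\CM^d)$ with $(DX_j(t),U_k)_{\CM}=\delta_{jk}$ a.s. Instead you assert bounds on positive and negative moments of the Malliavin covariance matrix of $X^{\varepsilon}(t)$. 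That is a different criterion (the classical global one), and it is precisely what the paper is trying to avoid, because it does not go through easily here. The covariance matrix of $X(t)$ is $\int_0^t Y(t)Z(s)\sigma\sigma^{\top}Z(s)^{\top}Y(t)^{\top}\,ds$, and while \lref{lem_1516245889} bounds $|Y(t)Z(s)|$ from above (because the eigenvalues of $a'$ are $\leq 0$), the \emph{inverse} $Y(s)Z(t)$ obeys an ODE whose generator is $-f'(X(\cdot))$, whose positive eigenvalues ($\sim (X_i-X_j)^{-2}$) are unbounded near $\partial\Delta_d$. So a lower bound on $\det\gamma_{X^{\varepsilon}(t)}$ with controllable negative moments is exactly the kind of estimate that is \emph{not} available without further work; your proposal tacitly assumes it.

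The paper circumvents this by a specific algebraic construction. It defines $u_{nk}$ in \eqref{eq_1515996735} so that an integration by parts yields the deterministic-looking identity $\langle DX_i(t),u_j\rangle_{\CM}=\gamma_{ij}$ with $\gamma=\idMat-e^{-(M+1)t}Y(t)$, which is \emph{not} the Malliavin covariance matrix. The point of this $\gamma$ is that its eigenvalues are $1-e^{-(M+1)t}\lambda_i$ with $|\lambda_i|\leq e^{Mt}$, giving the pathwise deterministic bound $|\det\gamma|\geq(1-e^{-t})^d$ in \lref{lem_1511946573} — no inverse moment condition is needed at all for the invertibility step. Then $U_j=\sum_k u_k(\gamma^{-1})_{kj}$ does the job. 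One more inaccuracy: no higher-order Malliavin derivatives appear; the exponent $6q$ in \eqref{ass_inv_moment} arises from $|\phi''(\xi)|\lesssim|\xi|^{-3}$, which after a Young inequality in the first-order estimate of $D\mathcal{Y}^{(\epsilon)}$ produces $(X_i-X_j)^{-6}$ inside a Gronwall bound, whose $q$-th moment then requires \aref{ass_1515645743}~\iref{item_1515740401}. Your "cascade of H\"older inequalities over higher derivatives" is not what drives the choice of exponent.
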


We prove this theorem by showing that $X(t)$ is locally non-degenerate
in the sense of \cite{FloritNualart1995} and \cite{Naganuma2013}.
For details, see \secref{Sec_Malliavin}.
Note that we do not assume that the diffusion matrix $\sigma$ is identity matrix.
For examples of such SDEs, see \cref{cor_1} below.

By similar arguments, we might show the smoothness under stronger assumption for $q$ in  \aref{ass_1515645743} (\ref{item_1515740401}).
It is known that a Dyson Brownian motion can be considered with $X(0)=\bar{x} \in \overline{\Delta_d}$.
However, for proving \tref{thm_1512205183}, we need the inverse moment condition \eqref{ass_inv_moment} thus the initial value $X(0)=\bar{x}$ is not in the boundary of $\Delta_d$.

Next, we propose and prove a criterion of \aref{ass_1515645743}.
For this purpose, we need additional assumptions on $f=a+b$ and $\sigma$ as follows.
\begin{assumption}\phantomsection
	\label{ass_1516937057}
	\begin{enumerate}
		\item	\label{item_1516957962}
				\begin{enumerate}
					\item	$\sigma=\idMat$, where $\idMat$ is the identity matrix.
					\item	$\alpha_{ik}=\alpha$ in \eqref{eq_1516869956}.
					\item \label{item_c}	$b=\mu+c$, where
							$\mu=(\mu_1,\dots,\mu_d)^\top$ and $c=(c_1,\dots,c_d)^\top\colon\RealNum^d\to\RealNum^d$
							are smooth functions such that
							\begin{itemize}
								\item	$\mu_i$ depends only on the $i$th argument,
										that is, $\mu_i(x)=\tilde{\mu_i}(x_i)$
										for some one-variable function $\tilde{\mu}_i$.
								\item	all derivatives of $\mu_i$ are bounded ($\mu_i$ itself need not be bounded).
								\item	$\mu_k(x)\geq \mu_l(x)$
										for any $x=(x_1,\dots,x_d)\in\Delta_d$ and $k>l$.
								\item	$c_i$ is bounded together with all its derivatives.
							\end{itemize}
				\end{enumerate}
		\item	\label{item_1516937490}
				$\alpha>6d+1/2$.
	\end{enumerate}
\end{assumption}
For example, if $\mu_i(x)=\tilde{\mu}_ix_i$
for a constant $\tilde{\mu}=(\tilde{\mu}_1,\dots,\tilde{\mu}_d)^\top$
with $\tilde{\mu}_k\geq \tilde{\mu}_l$ for $k>l$,
then $\mu$ satisfies all the conditions.
Then, we obtain the following.
\begin{theorem}\label{thm_1516938726}
	Let $0<t\leq T$.
	Under \aref{ass_1516937057},
	the solution $X$ exists and $X(t)$ admits a continuous density function
	with respect to the Lebesgue measure.
\end{theorem}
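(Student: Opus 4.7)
The plan is to deduce both items of \aref{ass_1515645743} from \aref{ass_1516937057} and then conclude by \tref{thm_1512205183}. The key tool will be a Girsanov transformation that simplifies the drift, combined with an It\^o analysis of the adjacent gaps in the simplified process.

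For \iref{item_1515735667}, I would construct the non-colliding strong solution in two stages. First, classical radial Dunkl theory, referred to in the introduction, produces on an auxiliary probability space a strong solution $Y$ of the pure Dyson-type SDE $dY_i = a_i(Y)\,dt + d\tilde{W}_i$ with $Y(t)\in\Delta_d$ almost surely for every $0\leq t\leq T$; this is available because $\sigma=\idMat$ and $\alpha>1/2$. Second, I would incorporate $b=\mu+c$ by a Girsanov change of measure. Since $b$ has only linear growth, Novikov's condition is not immediate, and I would handle it by localizing at the stopping times $\tau_n=\inf\{s:|Y(s)|\geq n\}$: verify that the candidate Radon--Nikodym density is a true martingale on each stopped interval and pass to the limit using standard moment estimates for $Y$. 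Equivalence of the two measures then transports the non-collision property to $X:=Y$.

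The heart of the argument is \iref{item_1515740401}. I would first apply Girsanov to remove the bounded drift $c$ (Novikov is trivial since $c$ and all its derivatives are bounded), so that under an equivalent measure $\tilde{\prob}$ the process satisfies
$$
	dX_i = a_i(X)\,dt + \mu_i(X)\,dt + d\tilde{W}_i.
$$
For an adjacent gap $Z:=X_{i+1}-X_i$, an It\^o computation yields
$$
	dZ = \Bigl[\frac{2\alpha}{Z} + \Psi(X) + \tilde{\mu}_{i+1}(X_{i+1}) - \tilde{\mu}_i(X_i)\Bigr]dt + d\tilde{B},
$$
where $\tilde{B}$ is a Brownian motion of quadratic variation $2t$, the $\mu$-difference is non-negative by the monotonicity hypothesis, and
$$
	\Psi(X) = -\alpha Z \sum_{k\neq i,i+1}\frac{1}{(X_{i+1}-X_k)(X_i-X_k)}
$$
is non-positive because each product $(X_{i+1}-X_k)(X_i-X_k)$ is positive for $k\notin\{i,i+1\}$ by the ordering. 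I would then apply It\^o to $Z^{-r}$ with $r$ slightly larger than $6d$, take expectation, and derive an inequality of the schematic form
$$
	\frac{d}{dt}\tilde{\expect}[Z(t)^{-r}]
	\leq r(r+1-2\alpha)\,\tilde{\expect}[Z(t)^{-r-2}]
	+ r\alpha\,\tilde{\expect}\Bigl[Z(t)^{-r}\sum_{k\neq i,i+1}\frac{1}{(X_{i+1}-X_k)(X_i-X_k)}\Bigr].
$$
The first term is dissipative whenever $r<2\alpha-1$, and the margin $\alpha>6d+1/2$ is precisely what allows $r$ to exceed $6d$; the positive cross term is controlled by Young's inequality and a simultaneous bootstrap on the joint inverse moments of all adjacent gaps. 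A H\"older step then transfers the bound back to $\prob$ without loss, since the Girsanov density used above has all moments. The non-adjacent case $j-i>1$ reduces to the adjacent one via $|X_j-X_i|\geq|X_{k+1}-X_k|$ for any $i\leq k<j$.

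The main technical obstacle will be closing the inverse-moment bootstrap simultaneously for all $d-1$ adjacent gaps, because the It\^o drift of $Z_{i,i+1}^{-r}$ couples to the inverses of the other gaps through $\Psi$. Handling this requires a vector-valued Gr\"onwall-type argument for $(\tilde{\expect}[Z_k(t)^{-r}])_{k=1,\dots,d-1}$, exploiting both the large-$\alpha$ dissipation and the additional dissipation from the monotonicity of $\mu$. A secondary hurdle is the Girsanov step in the existence argument, where the only linear growth of $b$ forces the localization scheme described above.
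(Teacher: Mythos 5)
Your reduction of the theorem to \aref{ass_1515645743} and then to \tref{thm_1512205183} is the right frame, but the way you propose to verify the two items diverges from the paper and, at the crucial point, has a genuine gap. For \aref{ass_1515645743}~\iref{item_1515735667}, note first that a Girsanov argument only yields a \emph{weak} solution of the target SDE; since the assumption demands a strong solution with uniqueness, you also need pathwise uniqueness (the paper proves it in \lref{lem_1517027748} from the monotonicity of $a$ and the Lipschitz property of $b$, and then invokes Yamada--Watanabe implicitly), which your plan omits; moreover your localization scheme for the linear-growth drift $\mu$ is avoidable, because the paper takes as base process the solution $X^{1/2,\mu}$ (with $\mu$ already included, available from Rogers--Shi, C\'epa--L\'epingle, Graczyk--Ma{\l}ecki) and uses Girsanov only to raise $\alpha$ from $1/2$ and to add the \emph{bounded} drift $c$, for which Novikov is immediate.

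The serious problem is \aref{ass_1515645743}~\iref{item_1515740401}. Your It\^o/Gr\"onwall scheme on $Z^{-r}$ for each adjacent gap is not merely technically delicate: the cross term $r\alpha\,\expect[Z_i^{-r}\sum_{k\neq i,i+1}((X_{i+1}-X_k)(X_i-X_k))^{-1}]$ is of the \emph{same} order $r+2$ as the dissipative term once you apply Young's inequality (each factor $((X_{i+1}-X_k)(X_i-X_k))^{-1}$ is only controlled by the inverse square of some other adjacent gap), so after summing over the $d-1$ gaps the best available coefficient balance reads, up to constants, $2\alpha-r-1>\alpha(d-2)$; this is impossible for $d\geq4$ no matter how large $\alpha$ is, and is already incompatible with the stated threshold $\alpha>6d+1/2$ for $d=3$ since $r$ must exceed $6d$. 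This loss is structural: a gap-by-gap argument discards the cancellation encoded in the identity $\sum_i u_{i1}(x)^2=\sum_i u_{i2}(x)$ for $u_{i1}=\partial_i\log h$, which is exactly what the paper exploits. Its route (\lref{lem_1517027501}, \lref{lem_existence}, \lref{lem_1516251703}, \pref{prop_1516937693}) is a change of measure from $X^{\alpha,\mu}$ to $X^{1/2,\mu}$ whose density is computed explicitly as $h(X^{1/2,\mu}(t))^{\alpha-1/2}h(\bar{x})^{-(\alpha-1/2)}$ times an exponential that is at most one (here the monotonicity of $\mu$ is used), so that $\expect[|X_i^{\alpha,\mu}(t)-X_k^{\alpha,\mu}(t)|^{-q}]$ becomes an expectation in which the singular factor $|X_i^{1/2,\mu}(t)-X_k^{1/2,\mu}(t)|^{-q}$ is cancelled by the factor $(X_k^{1/2,\mu}(t)-X_i^{1/2,\mu}(t))^{\alpha-1/2}$ from $h^{\alpha-1/2}$ whenever $q\leq\alpha-1/2$, leaving only positive moments controlled by \lref{lem_1516244752}; a final H\"older step removes $c$. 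If you insist on the direct It\^o approach you should expect a much stronger requirement on $\alpha$ (of order $d^2$, compare \cref{cor_1} via \cite{NgoTagichi2017}), not $\alpha>6d+1/2$; to prove the theorem as stated you need the Girsanov cancellation or an equally strong use of the logarithmic potential $\log h$.
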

We comment on \tref{thm_1516938726}.
We show in two steps that \aref{ass_1516937057} implies \aref{ass_1515645743}~\iref{item_1515735667}.
First, we consider the case $c=0$ and $\alpha\geq 1/2$, in which the
results of \cite[Lemma~1]{RogersShi1993}, \cite[Theorem~3.1]{CepaLepingle1997},
and \cite[Theorem~2.2, Corollary~6.2]{GraczykMalecki2014}
ensure that \eqref{eq_NonColPartSys} satisfies \aref{ass_1515645743}~\iref{item_1515735667}.
For general $c$ and $\alpha\geq 1/2$, see \pref{prop_1516938218}.
Note that we can show \pref{prop_1516938218} using only
the existence and uniqueness of solutions of \eqref{eq_NonColPartSys} with $c=0$ and $\alpha=1/2$.
Next, we consider \aref{ass_1515645743}~\iref{item_1515740401}.
To ensure the condition holds, we need \aref{ass_1516937057}~\iref{item_1516937490};
see \pref{prop_1516937693}.
The proofs of \pref[prop_1516938218]{prop_1516937693}
are based on the Girsanov transformation.

We obtain the following for non-identity diffusion matrix $\sigma$, which is not considered in the theory of Dunkl process.
\begin{corollary}\label{cor_1}
Let $0<t\leq T$ and define $\sigma_{d}^2:=\max_{i=1,\ldots,d}\sum_{k=1}^{d}\sigma_{ik}^2$.
We assume $\alpha_{ik}=\alpha >(6d+1)d \sigma_{d}^2/3$ in \eqref{eq_1516869956} and $b=\mu$ satisfies \aref{ass_1516937057} (c).
Then the solution $X$ exists and $X(t)$ admits a continuous density function
with respect to the Lebesgue measure.
\end{corollary}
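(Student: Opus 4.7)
My plan is to verify \aref{ass_1515645743} under the corollary's hypotheses and then invoke \tref{thm_1512205183}, following the pattern of \tref{thm_1516938726} with modifications to accommodate the non-identity diffusion $\sigma$. For \aref{ass_1515645743}~\iref{item_1515735667}, I would first apply a Girsanov transformation to absorb the smooth drift $\mu$; since $\mu$ has bounded derivatives and hence grows at most linearly, the Novikov condition holds on the non-colliding event and the resulting density is an $L^p$-martingale for every $p \geq 1$. After this change of measure the SDE becomes $dX_i(t) = a_i(X(t))\,dt + \sum_k \sigma_{ik}\,d\tilde{W}_k(t)$, whose existence and non-collision in $\Delta_d$ follow by a direct extension of the arguments in \pref{prop_1516938218} to general invertible $\sigma$; because the corollary's bound $\alpha > (6d+1)d\sigma_d^2/3$ is far stronger than what is required to prevent collisions (roughly $\alpha \gtrsim \sigma_d^2$, via a pairwise Bessel-type comparison on $X_i - X_j$), no difficulty arises here.

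For \aref{ass_1515645743}~\iref{item_1515740401}, I would introduce the product Lyapunov function $F(x) = \prod_{i<j}(x_j - x_i)^{-\gamma}$ for a parameter $\gamma$ to be chosen. A direct computation gives $\partial_{x_m}\log F = -\gamma\, a_m(x)/\alpha$, so that Itô's formula expresses the action of the infinitesimal generator $\mathcal{L}$ on $F$ as a linear combination of $\sum_m a_m^2$, the quadratic form $a^{\top}(\sigma\sigma^{\top})a$, and the pairwise sum $\sum_{m<k}(x_k - x_m)^{-2}$. Using the classical identity $\sum_m a_m^2 = 2\alpha^2 \sum_{m<k}(x_k - x_m)^{-2}$ (which comes from the three-term cancellation $\sum_{\mathrm{cyc}}[(x_m - x_k)(x_m - x_l)]^{-1} = 0$), the bound $\tau_{mk}^2 := \sum_l(\sigma_{ml} - \sigma_{kl})^2 \leq 4\sigma_d^2$, and the trace estimate $a^{\top}(\sigma\sigma^{\top})a \leq d\sigma_d^2\,|a|^2$, together with the monotonicity property of $\mu$ in \aref{ass_1516937057}~\iref{item_c} (which renders the $\mu$-contribution non-positive), I expect to obtain $\mathcal{L}F \leq 0$ once $\alpha$ exceeds a specific multiple of $\gamma d\sigma_d^2$. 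This gives $\expect[F(X(t))] \leq F(\bar{x})$ uniformly in $t\in[0,T]$, and a Hölder step combined with the polynomial moment bounds on $X$ itself then passes from the product estimate to the pairwise bound $\expect[|X_i(t) - X_j(t)|^{-6q}] < \infty$ for some $q > d$. With both parts of \aref{ass_1515645743} in hand, \tref{thm_1512205183} supplies the continuous density.

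The main obstacle is recovering the sharp constant $(6d+1)d/3$ stated in the corollary. The identity-diffusion case of \tref{thm_1516938726} benefits from the clean relations $\tau_{mk}^2 = 2$ and $a^{\top}(\sigma\sigma^{\top})a = |a|^2$, whereas for general $\sigma$ each of these coefficients must be estimated through $\sigma_d^2$ together with the spectral structure of $\sigma\sigma^{\top}$. Obtaining the stated threshold amounts to balancing the trace bound $a^{\top}(\sigma\sigma^{\top})a \leq d\sigma_d^2|a|^2$ against the exponent $\gamma$ required by the Hölder extraction of the $-6q$-th moment, and checking that the combined constraint closes precisely at $\alpha > (6d+1)d\sigma_d^2/3$.
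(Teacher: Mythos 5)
The paper's proof of \cref{cor_1} is a two-line citation: existence and strong uniqueness are taken from \cite[Theorem~3.6]{NgoTagichi2017}, and the inverse-moment bound of \aref{ass_1515645743}~\iref{item_1515740401} from \cite[Lemma~3.4]{NgoTagichi2017}, after noting that $\alpha>(6d+1)d\sigma_d^2/3$ is equivalent to the existence of $q>d$ with $6q<3\alpha/(d\sigma_d^2)-1$. Your proposal is a genuinely different route: rather than citing those external results, you try to re-derive them from scratch using a Girsanov reduction for existence and a Lyapunov/generator computation for the inverse moments. That is a legitimate ambition, but as written it has two real gaps.

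First, the existence step. You propose to remove the smooth drift $\mu$ by Girsanov and then invoke ``a direct extension of the arguments in \pref{prop_1516938218} to general invertible $\sigma$.'' But the entire Girsanov machinery of \secref{Sec_Example} is tied to $\sigma=\idMat$: the change of measure from $\alpha=1/2$ to general $\alpha$ uses the density $Z(t)\propto h(X^{1/2,\mu}(t))^\nu$, which is the Doob $h$-transform associated with $h(x)=\prod_{k>l}(x_k-x_l)$, and this works precisely because the drift $\sum_{k\neq i}(x_i-x_k)^{-1}$ coincides with $\partial_i\log h$. For a general $\sigma$, the $h$-transform of the generator $\mathcal{L}=\sum_i f_i\partial_i+\tfrac12\sum_{ik}(\sigma\sigma^\top)_{ik}\partial_i\partial_k$ produces an added drift $\sum_k(\sigma\sigma^\top)_{ik}\partial_k\log h$, which is \emph{not} a multiple of $a_i$ unless $\sigma\sigma^\top$ is a multiple of the identity. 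So ``no difficulty arises here'' is not justified; the route through \pref{prop_1516938218} breaks down for non-identity $\sigma$, which is exactly why the paper appeals to an independent existence result. (There is also a circularity: the Novikov argument to absorb $\mu$ presupposes a process on which to condition, i.e.\ existence of the very solution you are trying to construct.)

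Second, the moment step. The Lyapunov computation with $F(x)=\prod_{i<j}(x_j-x_i)^{-\gamma}$ is the right kind of idea, and the ingredients you list---the identity $|a|^2=2\alpha^2\sum_{m<k}(x_m-x_k)^{-2}$, the bound $\sum_l(\sigma_{ml}-\sigma_{kl})^2\le 4\sigma_d^2$, the trace bound $a^\top(\sigma\sigma^\top)a\le d\sigma_d^2|a|^2$, and the sign of the $\mu$-contribution---are all correct. But you stop at the constraint $\mathcal{L}F\le 0$ and never carry through the H\"older extraction of the pairwise $-6q$-th moment, nor verify that the bookkeeping closes at $\alpha>(6d+1)d\sigma_d^2/3$; you explicitly flag this as an unresolved ``main obstacle.'' Indeed a back-of-the-envelope version of your estimate gives a threshold roughly of order $3d^2\sigma_d^2$ rather than the $\approx 2d^2\sigma_d^2$ the corollary states, so the constant does not come out for free. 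As it stands, the proposal is a plausible plan rather than a proof, and both halves need to be completed or replaced by the cited results from \cite{NgoTagichi2017} as the paper does.
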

\begin{proof}
The existence and strong uniqueness follows from \cite[Theorem~3.6]{NgoTagichi2017}.
The assumption on $\alpha$ implies that there exists $q>d$ such that $6q<\frac{3\alpha}{d \sigma_d^2}-1$.
Thus \cite[Lemma 3.4]{NgoTagichi2017} ensures \aref{ass_1515645743} (\ref{item_1515740401}).
\end{proof}

Finally, we note that our framework covers hyperbolic particle systems.
We set $\mu=0$ and define $c=(c_1,\dots,c_d)^\top\colon\RealNum^d\to\RealNum^d$ by
\begin{align*}
	c_i(x)
	=
		\sum_{k;k\neq i}
			\alpha_{ik}\psi(x_i-x_k),
	\quad
	\text{where}
	\quad
	\psi(\xi)
	=
		\begin{cases}
			0,& \xi=0,\\
			\coth\xi-\dfrac{1}\xi,&\xi\neq 0.
		\end{cases}
\end{align*}
Because $\psi$ is smooth and all its derivatives are bounded,
$c$ is smooth and all its derivatives are bounded.
In addition, we obtain
\begin{align*}
	f_i(x)
	=
		\sum_{k;k\neq i}
			\alpha_{ik}
			\left\{
				\frac{1}{x_i-x_k}
				+
				\psi(x_i-x_k)
			\right\}
	=
		\sum_{k;k\neq i}
			\alpha_{ik}
			\coth(x_i-x_k).
\end{align*}

\subsection{Notation and Structure}
In the present paper, we use the following notation.
For $x,y\in\RealNum^n$, $|x|$ and $\langle x,y\rangle$ denote
the Euclidian norm and the Euclidian inner product, respectively.
Let $\Mat{n}{\RealNum}$ be the set of all real square matrices of size $n$
and $\Sym{n}{\RealNum}$ be the set of all real symmetric matrices of size $n$.
We set
$
	|A|
	=
		(
			\sum_{i,j=1}^n A_{ij}^2
		)^{1/2}
$
for $A=(A_{ij})_{1\leq i,j\leq n}\in \Mat{n}{\RealNum}$,
which is a norm $|\cdot|$ on $\Mat{n}{\RealNum}$.
Note that the norm $|\cdot|$ is sub-multiplicative; that is, $|AB|\leq|A||B|$.
For $A\in\Mat{n}{\RealNum}$ and $x\in\RealNum^n$, $A^\top$ and $x^\top$
stand for the transposes of $A$ and $x$, respectively.
The set of $\RealNum^n$-valued continuous functions defined on $[0,T]$
is denoted by $C([0,T];\RealNum^n)$.
For $1<p<\infty$, $L^p([0,T];\RealNum^n)$ stands for
the set of $\RealNum^n$-valued power-$p$ integrable functions on $[0,T]$.
For a smooth $\RealNum^n$-valued function $g=(g_1,\dots,g_n)^\top$ defined on $\RealNum^n$,
we write $g_{ij}'=\partial g_i/\partial x_j$ and $g_{ijk}''=\partial^2 g_i/\partial x_j\partial x_k$.
We also regard $g'=(g_{ij}')_{1\leq i,j\leq n}$ as a $\Mat{n}{\RealNum}$-valued function.
The Kronecker delta is denoted by $\delta_{jk}$.
An indicator function of a set $F$ is denoted by $\indicator{F}$.

This paper is structured as follows.
In \secref{Sec_Preliminaries}, we make some remarks on the drift coefficient $f$
and introduce approximating SDEs of \eqref{eq_NonColPartSys}.
In \secref{Sec_Malliavin}, we apply Malliavin calculus to non-colliding particle systems
by using the approximating SDEs introduced in \secref{Sec_Preliminaries}.
The proof of \tref{thm_1512205183} is deferred to \secref{Sec_Malliavin}.
In \secref{Sec_Example}, we show \tref{thm_1516938726}.
In Appendix~\ref{Appendix}, we study some properties of
matrix-valued ordinary differential equations (ODEs).

\section{Preliminaries}\label{Sec_Preliminaries}

\subsection{Remarks on the Drift Coefficient}
We show some properties of the singular part $a$ of the drift coefficient $f$ from \eqref{eq_NonColPartSys}.
\begin{lemma}\label{lem_1515144318}
	We have the following:
	\begin{enumerate}
		\item	\label{item_1515480353}
				For all $n\in\NaturalNum\cup\{0\}$ and $\xi\in I_{ij}$,
				we have $\frac{d^n\phi_{ij}}{d\xi^n}(\xi)=(-1)^{n+1} \frac{d^n\phi_{ji}}{d\xi^n}(-\xi)$.
		\item	\label{item_1515480796}
				For all $x\in\Delta_d$, $a'(x)$ is symmetric and given by
				\begin{align*}
					a'_{ij}(x)
					=
						\begin{cases}
							\displaystyle
								{
									\sum_{l;l\neq i}
										\phi'_{il}(x_i-x_l)
								},
							&
							i=j,\\
							-\phi'_{ij}(x_i-x_j),
							&
							i\neq j.
						\end{cases}
				\end{align*}
	\end{enumerate}
\end{lemma}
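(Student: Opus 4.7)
The plan is to leverage the explicit form $\phi_{ij}(\xi) = \alpha_{ij}/\xi$ together with the symmetry $\alpha_{ij} = \alpha_{ji}$ of the matrix $\alpha$ for part~(1), and then to differentiate the defining sum term by term for part~(2), invoking part~(1) at the very end to obtain the symmetry of $a'(x)$.

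For part~(1), the quickest route is a direct computation: one checks that $\phi^{(n)}_{ij}(\xi) = (-1)^n n!\,\alpha_{ij}/\xi^{n+1}$, and that $\phi^{(n)}_{ji}(-\xi)$ picks up one extra sign relative to this because $(-\xi)^{n+1} = (-1)^{n+1}\xi^{n+1}$; the identity $\alpha_{ij} = \alpha_{ji}$ then produces exactly the factor $(-1)^{n+1}$. Equivalently one can argue by induction on $n$, with the base case $n=0$ reducing to $\alpha_{ij}/\xi = \alpha_{ji}/\xi$ and the inductive step just differentiating both sides of $\phi^{(n)}_{ij}(\xi) = (-1)^{n+1}\phi^{(n)}_{ji}(-\xi)$, where the chain-rule factor $-1$ from $d(-\xi)/d\xi$ promotes $(-1)^{n+1}$ to $(-1)^{n+2}$.

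For part~(2), I would compute $a'_{ij}(x) = \partial a_i/\partial x_j$ directly from $a_i(x) = \sum_{k\neq i}\phi_{ik}(x_i-x_k)$. When $j=i$, every summand depends on $x_i$ through the first slot of $\phi_{ik}$, yielding the sum $\sum_{l\neq i}\phi'_{il}(x_i-x_l)$. When $j\neq i$, only the $k=j$ summand depends on $x_j$, and the chain rule gives $-\phi'_{ij}(x_i-x_j)$. To obtain the symmetry $a'_{ij}(x) = a'_{ji}(x)$ for $i\neq j$, I apply part~(1) with $n=1$ at $\xi = x_i-x_j$, giving $\phi'_{ij}(x_i-x_j) = \phi'_{ji}(-(x_i-x_j)) = \phi'_{ji}(x_j-x_i)$, which is exactly what is needed.

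There is no genuine obstacle here; the lemma is a routine computation based on the explicit form of $\phi_{ij}$ together with the symmetry of $\alpha$. The only place requiring care is the sign bookkeeping in part~(1), where the chain-rule contribution from differentiating $\phi_{ji}(-\xi)$ must not be dropped.
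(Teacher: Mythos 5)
Your proof is correct and follows essentially the same route as the paper: the paper also establishes part~(1) by induction (via the difference quotient, which is just the chain-rule step you describe) and dismisses part~(2) as a direct computation of the kind you spell out. Your alternative closed-form derivation $\phi_{ij}^{(n)}(\xi)=(-1)^n n!\,\alpha_{ij}/\xi^{n+1}$ is a slightly more explicit shortcut for part~(1), but it is not a genuinely different argument.
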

\begin{proof}
	We show Assertion~\eqref{item_1515480353} by induction.
	The case for $n=0$ follows from the definition.
	Assume that the assertion holds for some $n$. Then, we have
	\begin{align*}
		\frac{1}{h}
		\left(
			\frac{d^n\phi_{ij}}{d\xi^n}(\xi+h)
			-
			\frac{d^n\phi_{ij}}{d\xi^n}(\xi)
		\right)
		=
			\frac{(-1)^{n+1}}{-1}
			\frac{1}{-h}
			\left(
				\frac{d^n\phi_{ji}}{d\xi^n}(-\xi-h)
				-
				\frac{d^n\phi_{ji}}{d\xi^n}(-\xi)
			\right).
	\end{align*}
	By letting $h\to 0$, we obtain the assertion for $n+1$.
	The proof is complete.

	Direct computation yields Assertion~\iref{item_1515480796}.
\end{proof}

\begin{lemma}\label{lem_1515650878}
	For any $x\in\Delta_d$ and $y,z\in\RealNum^d$, we have
	\begin{gather}
		\label{eq_1515216539}
		\sum_{i=1}^d
			x_i
			a_i(x)
		=
			\sum_{k,l;k>l}
				(x_k-x_l)
				\phi_{kl}(x_k-x_l)
		=
			\sum_{k,l;k>l}
				\alpha_{kl},\\
		\label{eq_1515216560}
		\langle y,a'(x)y\rangle
		=
			\sum_{k,l;k>l}
				\phi_{kl}'(x_k-x_l)
				(y_k-y_l)^2
		\leq
			0,\\
		\label{eq_1515216548}
		\sum_{j,k=1}^d
			a''_{ijk}(x)y_jz_k
		=
			\sum_{l;l\neq i}
				\phi''_{il}(x_i-x_l)
				(y_i-y_l)
				(z_i-z_l).
	\end{gather}
\end{lemma}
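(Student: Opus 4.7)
The plan is to establish all three assertions by direct computation, using only the definition $a_i(x)=\sum_{k\neq i}\phi_{ik}(x_i-x_k)$, the explicit formula for $a'_{ij}$ supplied by \lref{lem_1515144318}, and the parity relation $\phi_{ij}^{(n)}(\xi)=(-1)^{n+1}\phi_{ji}^{(n)}(-\xi)$ from the same lemma.

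For the first identity, I would substitute the definition of $a_i$ and reorganize $\sum_i x_i a_i(x)=\sum_{i\neq k}x_i\phi_{ik}(x_i-x_k)$ by pairing the ordered indices $(i,k)$ and $(k,i)$ for each unordered pair $\{i,k\}$ with $i>k$. Applying the parity relation with $n=0$ gives $\phi_{ki}(x_k-x_i)=-\phi_{ik}(x_i-x_k)$, so each pair contributes $(x_i-x_k)\phi_{ik}(x_i-x_k)$. Since $x\in\Delta_d$ forces $x_i-x_k>0$ and $\phi_{ik}(\xi)=\alpha_{ik}/\xi$ on $(0,\infty)$, this collapses to $\alpha_{ik}$, giving both stated equalities simultaneously.

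For the quadratic form identity, I would insert the componentwise formula for $a'_{ij}$ into $\sum_{i,j}y_i a'_{ij}(x)y_j$, separating the diagonal contribution $\sum_{i\neq l}y_i^2\phi'_{il}(x_i-x_l)$ from the off-diagonal contribution $-\sum_{i\neq j}y_iy_j\phi'_{ij}(x_i-x_j)$. Symmetrizing by pairing $(i,j)$ with $(j,i)$ for $i>j$ and invoking the parity relation with $n=1$, which gives $\phi'_{ji}(-\xi)=\phi'_{ij}(\xi)$, the combined sum collapses into $\sum_{i>j}\phi'_{ij}(x_i-x_j)(y_i-y_j)^2$. The inequality $\leq 0$ then follows since $\phi'_{ij}(\xi)=-\alpha_{ij}/\xi^2\leq 0$.

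For the third identity, I would differentiate the formula for $a'_{ij}$ with respect to $x_k$ and enumerate the case pairings among $i,j,k$ that produce nonzero $a''_{ijk}$ (namely $j=k=i$, $j=i\neq k$, $k=i\neq j$, and $j=k\neq i$; all others vanish). Collecting by index $l\neq i$, the sum organizes as $\sum_{l\neq i}\phi''_{il}(x_i-x_l)\bigl[y_iz_i-y_iz_l-y_lz_i+y_lz_l\bigr]$, which factors as the claimed $(y_i-y_l)(z_i-z_l)$ product. No step is conceptually deep; the only place where bookkeeping requires care is the symmetrization in the quadratic form, where the $(i,j)$ and $(j,i)$ summands must be paired correctly and the $n=1$ case of the parity relation invoked at precisely the right moment.
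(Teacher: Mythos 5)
Your proposal is correct and follows essentially the same route as the paper: expand $a_i$, use the symmetry/parity relation $\phi_{ij}^{(n)}(\xi)=(-1)^{n+1}\phi_{ji}^{(n)}(-\xi)$ to collapse paired terms over unordered index pairs, and finish the Hessian identity by direct case enumeration. The only cosmetic difference is that the paper packages the pairing step as an explicit auxiliary identity $\sum_i\xi_i\sum_{l\neq i}\eta_{il}=\sum_{k>l}(\xi_k\eta_{kl}+\xi_l\eta_{lk})$, whereas you perform the same reindexing inline.
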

We show \eqref{eq_1515216539} and \eqref{eq_1515216560} by using the following identity:
\begin{align}\label{eq_7439104813}
	\sum_{i=1}^d
		\xi_i
		\sum_{l;l\neq i}
			\eta_{il}
	=
		\sum_{k,l;k>l}
			\{\xi_k \eta_{kl}+\xi_l \eta_{lk}\}
\end{align}
for any $\{\xi_i\}_{1\leq i\leq d}$
and $\{\eta_{ij}\}_{1\leq i,j\leq d,i\neq j}$.

\begin{proof}[Proof of \lref{lem_1515650878}]
	We show \eqref{eq_1515216539}.
	From \eqref{eq_7439104813} and \lref{lem_1515144318}~\iref{item_1515480353}, we have
	\begin{align*}
		\sum_{i=1}^d
			x_i
			a_i(x)
		&=
			\sum_{i=1}^d
				x_i
				\sum_{l;l\neq i}
					\phi_{il}(x_i-x_l)\\
		&=
			\sum_{k,l;k>l}
				\{
					x_k
					\phi_{kl}(x_k-x_l)
					+
					x_l
					\phi_{lk}(x_l-x_k)
				\}\\
		&=
			\sum_{k,l;k>l}
				(x_k-x_l)
				\phi_{kl}(x_k-x_l).
	\end{align*}

	We show \eqref{eq_1515216560}.
	\lref{lem_1515144318}~\iref{item_1515480796} implies
	\begin{align*}
		(a'(x)y)_i
		&=
			\sum_{j=1}^d
				a'_{ij}(x)y_j
		=
			\Bigg(
				\sum_{l;l\neq i}
					\phi'_{il}(x_i-x_l)
			\Bigg)
			y_i
			+
			\sum_{j;j\neq i}
				(-\phi'_{ij}(x_i-x_j))y_j\\
		&=
			\sum_{l;l\neq i}
				\phi'_{il}(x_i-x_l)
				(y_i-y_l).
	\end{align*}
	This expression and \eqref{eq_7439104813} yield
	\begin{align*}
		\langle y,a'(x)y\rangle
		&=
			\sum_{i=1}^d
				y_i
				(a'(x)y)_i
		=
			\sum_{i=1}^d
				y_i
				\Bigg(
					\sum_{l;l\neq i}
						\phi'_{il}(x_i-x_l)
						(y_i-y_l)
				\Bigg)\\
		&=
			\sum_{k,l;k>l}
				\{
					y_k
					\phi'_{kl}(x_k-x_l)
					(y_k-y_l)
					+
					y_l
					\phi'_{lk}(x_l-x_k)
					(y_l-y_k)
				\}\\
		&=
			\sum_{k,l;k>l}
				\phi'_{kl}(x_k-x_l)
				(y_k-y_l)^2.
	\end{align*}

	Direct computation yields \eqref{eq_1515216548}.
	The proof is complete.
\end{proof}

\subsection{Approximating SDEs}
To apply Malliavin calculus to a solution of \eqref{eq_NonColPartSys},
we must consider how to approximate SDEs.
For this purpose, we define for the drift coefficient $f$
a family of approximations $\{f^{(\epsilon)}\}_{0<\epsilon<1}$ on $\RealNum^d$.

First, we define a family of functions $\{\rho_\epsilon\}_{0<\epsilon<1}$
that approximates the function $\rho\colon\RealNum\to\RealNum$
defined by $\rho(\xi)=\xi\indicator{[0,\infty)}(\xi)$ as follows.
We set
\begin{align*}
	\tilde{\lambda}(\xi)
	&=
		\begin{cases}
			e^{-1/\xi},& \xi>0,\\
			0,& \xi\leq 0,\\
		\end{cases}
	&
	\lambda(\xi)
	&=
		\frac{\tilde{\lambda}(\xi)}{\tilde{\lambda}(\xi)+\tilde{\lambda}(1-\xi)},
	&
	\lambda_\epsilon(\xi)
	&=
		\lambda
			\left(
				\frac{\xi}{\epsilon}
			\right)
\end{align*}
and define
\begin{align*}
	\rho_\epsilon(\xi)
	&=
		\epsilon
		+
		\int_{\epsilon}^\xi
			\lambda_\epsilon(\eta)\,
			d\eta.
\end{align*}
Next, for $i\neq j$, we introduce $\{\phi^{(\epsilon)}_{ij}\}_{0<\epsilon<1}$
that approximates $\phi_{ij}$ by
\begin{align*}
	\phi^{(\epsilon)}_{ij}(\xi)
	&=
		\begin{cases}
			\displaystyle
				{
					\phi_{ij}(\epsilon)
					+
						\int_\epsilon^\xi
							\phi_{ij}'(\rho_\epsilon(\eta))\,
							d\eta,
				}
			&
				i>j,\\
			\displaystyle
				{
					\phi_{ij}(-\epsilon)
					+
						\int_{-\epsilon}^\xi
							\phi_{ij}'(-\rho_\epsilon(-\eta))\,
							d\eta,
				}
			&
				i<j.
		\end{cases}
\end{align*}
Finally, we define $a^{(\epsilon)}=(a^{(\epsilon)}_1,\dots,a^{(\epsilon)}_d)^\top$ and $f^{(\epsilon)}=(f^{(\epsilon)}_1,\dots,f^{(\epsilon)}_d)^\top \colon\RealNum^d\to\RealNum^d$ by
\begin{align*}
	a^{(\epsilon)}_i(x)
	&=
		\sum_{k;k\neq i}
			\phi^{(\epsilon)}_{ik}(x_i-x_k),
	&
	f^{(\epsilon)}_i=a^{(\epsilon)}_i+b_i.
\end{align*}

To approximate the solution of \eqref{eq_NonColPartSys},
we consider a solution $X^{(\epsilon)}$ to an SDE
\begin{align}\label{eq_1515462872}
	\left\{
		\begin{aligned}
			dX^{(\epsilon)}_i(t)
			&=
				f^{(\epsilon)}_i(X^{(\epsilon)}(t))\,
				dt
				+
				\sum_{k=1}^d
					\sigma_{ik}\,
					dW_k(t),
			\quad
			i=1,\dots,d,\\
			X^{(\epsilon)}(0)
			&=
				\bar{x}.
		\end{aligned}
	\right.
\end{align}
Note that this SDE admits a unique strong solution
because $f^{(\epsilon)}$ is smooth
and all its derivatives are bounded for all $0<\epsilon<1$.

\begin{remark}
	The functions $\lambda_\epsilon$ and $\rho_\epsilon$ have the following properties.
	\begin{itemize}
		\item	$0\leq\lambda_\epsilon(\xi)\leq 1$ for $\xi\in\RealNum$ and
				$\lambda_\epsilon(\xi)=0$ (resp.\ $1$) for $\xi\leq 0$ (resp.\ $\epsilon \leq \xi$).
		\item	$\rho_\epsilon$ is smooth and non-decreasing.
				We have
				\begin{align*}
					\rho_\epsilon(\xi)
					=
						\begin{cases}
							\displaystyle
								{
									\epsilon
									+
									\int_{\epsilon}^0
										\lambda_\epsilon(\eta)\,
										d\eta,
								}& \xi\leq 0,\\
							\xi,& \epsilon\leq \xi.
						\end{cases}
				\end{align*}
				In addition, we obtain that $\xi\leq\rho_\epsilon(\xi)$ for $0<\xi<\epsilon$.
	\end{itemize}
\end{remark}
\begin{remark}
	Note that $\{\phi^{(\epsilon)}_{ij}\}_{0<\epsilon<1}$ and $\{a^{(\epsilon)}\}_{0<\epsilon<1}$
	are good approximations of $\phi_{ij}$ and $a$, respectively.
	For example, $\phi^{(\epsilon)}_{ij}$ is smooth and non-increasing on $\RealNum$.
	For $i>j$, $\phi^{(\epsilon)}_{ij}$ is expressed as
	\begin{align}\label{eq_1517275514}
		\phi^{(\epsilon)}_{ij}(\xi)
		=
			\begin{cases}
				\displaystyle
					{
						\phi_{ij}(\epsilon)
						+
						\int_{\epsilon}^0
							\phi_{ij}'(\rho_\epsilon(\eta))\,
							d\eta
						+
						\phi_{ij}'(\rho_\epsilon(0))
						\xi
					},&
					\xi\leq 0,\\
				\phi_{ij}(\xi),& \epsilon\leq \xi.
			\end{cases}
	\end{align}
	In particular, $\phi^{(\epsilon)}_{ij}(\xi)\geq 0$ for $\xi\leq 0$.
	In addition, we obtain that
	\begin{align}\label{eq_1517275550}
			\phi^{(\epsilon)}_{ij}(\xi)
			=
				\phi_{ij}(\epsilon)
				+
				\int_{\epsilon}^\xi
					\phi_{ij}'(\rho_\epsilon(\eta))\,
					d\eta
			\leq
				\phi_{ij}(\epsilon)
				+
				\int_{\epsilon}^\xi
					\phi_{ij}'(\eta)\,
					d\eta
			=
				\phi_{ij}(\xi)
		\end{align}
		for $0<\xi<\epsilon$.
		In the estimate, we used the fact that $\xi\leq\rho_\epsilon(\xi)$ for $\xi>0$.
\end{remark}

We obtain the assertions of \lref[lem_1515144318]{lem_1515650878}
in which $\phi_{ij}$ and $a$ are replaced by $\phi^{(\epsilon)}_{ij}$ and $a^{(\epsilon)}$, respectively.
Indeed, we obtain the following.
\begin{lemma}\label{lem_1516867028}
	We have the following.
	\begin{enumerate}
		\item	\label{item_1516866814}
				For all $n\in\NaturalNum\cup\{0\}$ and $\xi\in I_{ij}$,
				we have $\frac{d^n\phi^{(\epsilon)}_{ij}}{d\xi^n}(\xi)=(-1)^{n+1} \frac{d^n\phi^{(\epsilon)}_{ji}}{d\xi^n}(-\xi)$.
		\item	\label{item_1516866818}
				For all $x\in\Delta_d$, $a^{(\epsilon),\prime}(x)$ is symmetric and given by
				\begin{align*}
					a^{(\epsilon),\prime}_{ij}(x)
					=
						\begin{cases}
							\displaystyle
								{
									\sum_{l;l\neq i}
										\phi^{(\epsilon),\prime}_{il}(x_i-x_l)
								},
							&
							i=j,\\
							-\phi^{(\epsilon),\prime}_{ij}(x_i-x_j),
							&
							i\neq j.
						\end{cases}
				\end{align*}
	\end{enumerate}
\end{lemma}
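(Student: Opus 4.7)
The plan is to mimic the proof of \lref{lem_1515144318}, adapted to the mollified objects. The two assertions are exact analogues for $\phi^{(\epsilon)}_{ij}$ and $a^{(\epsilon)}$, so the overall structure of the argument carries over; the only new wrinkle is that the definition of $\phi^{(\epsilon)}_{ij}$ splits into two cases depending on whether $i>j$ or $i<j$.

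For assertion~\eqref{item_1516866814} I would proceed by induction on $n$. For the base case $n=0$, fix $i>j$, so that $\phi^{(\epsilon)}_{ij}$ is given by the first branch of the piecewise definition and $\phi^{(\epsilon)}_{ji}$ by the second branch. Starting from
\begin{align*}
	\phi^{(\epsilon)}_{ji}(-\xi)
	=
		\phi_{ji}(-\epsilon)
		+
		\int_{-\epsilon}^{-\xi}
			\phi_{ji}'(-\rho_\epsilon(-\zeta))\,
			d\zeta,
\end{align*}
I would perform the change of variable $u=-\zeta$ to rewrite the integral over $[\epsilon,\xi]$, and then apply \lref{lem_1515144318}~\iref{item_1515480353} to the original functions in the cases $n=0$ and $n=1$, namely $\phi_{ji}(-\epsilon)=-\phi_{ij}(\epsilon)$ and $\phi_{ji}'(-\rho_\epsilon(u))=\phi_{ij}'(\rho_\epsilon(u))$. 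Collecting terms gives $\phi^{(\epsilon)}_{ji}(-\xi)=-\phi^{(\epsilon)}_{ij}(\xi)$, which is the $n=0$ relation. The inductive step is then immediate: differentiating both sides of the $n$th identity with respect to $\xi$ yields
\begin{align*}
	\frac{d^{n+1}\phi^{(\epsilon)}_{ij}}{d\xi^{n+1}}(\xi)
	=
		(-1)^{n+1}\cdot(-1)
		\frac{d^{n+1}\phi^{(\epsilon)}_{ji}}{d\xi^{n+1}}(-\xi)
	=
		(-1)^{n+2}
		\frac{d^{n+1}\phi^{(\epsilon)}_{ji}}{d\xi^{n+1}}(-\xi),
\end{align*}
exactly as in the proof of \lref{lem_1515144318}.

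For assertion~\eqref{item_1516866818}, I would differentiate $a^{(\epsilon)}_i(x)=\sum_{k;k\neq i}\phi^{(\epsilon)}_{ik}(x_i-x_k)$ termwise. The derivative $\partial_{x_i}$ acts on every summand and produces $\sum_{l;l\neq i}\phi^{(\epsilon),\prime}_{il}(x_i-x_l)$, while for $j\neq i$ the derivative $\partial_{x_j}$ picks out only the $k=j$ term and contributes a minus sign from the inner chain rule, yielding $-\phi^{(\epsilon),\prime}_{ij}(x_i-x_j)$. Symmetry of $a^{(\epsilon),\prime}(x)$ then reduces to the identity $\phi^{(\epsilon),\prime}_{ij}(x_i-x_j)=\phi^{(\epsilon),\prime}_{ji}(x_j-x_i)$, which is the $n=1$ case of part~\eqref{item_1516866814}.

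The only step that is not purely routine is the base case of part~\eqref{item_1516866814}, where one must track the case split between $i>j$ and $i<j$ and perform the change of variable correctly; once the original identities from \lref{lem_1515144318} are in hand, everything else is formal.
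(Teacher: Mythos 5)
Your proposal is correct and follows essentially the same route as the paper: prove the $n=0$ case for part~\eqref{item_1516866814} directly from the two-branch definition of $\phi^{(\epsilon)}_{ij}$ via a change of variable together with the identities for the unsmoothed $\phi_{ij}$ from \lref{lem_1515144318}, then handle $n\geq 1$ by induction, and deduce part~\eqref{item_1516866818} from the $n=1$ case. The only cosmetic differences are that the paper starts from $\phi^{(\epsilon)}_{ij}(\xi)$ and transforms it into $-\phi^{(\epsilon)}_{ji}(-\xi)$ (you go in the opposite direction), and that the paper phrases the inductive step via a difference quotient while you invoke the chain rule directly; these are the same argument.
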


\begin{lemma}\label{lem_1516866857}
	For any $x\in\Delta_d$ and $y,z\in\RealNum^d$, we have
	\begin{gather}
		\label{eq_1516866861}
		\sum_{i=1}^d
			x_i
			a^{(\epsilon)}_i(x)
		=
			\sum_{k,l;k>l}
				(x_k-x_l)
				\phi^{(\epsilon)}_{kl}(x_k-x_l)
		\leq
			\sum_{k,l;k>l}
				\alpha_{kl},\\
		\label{eq_1516866865}
		\langle y,a^{(\epsilon),\prime}(x)y\rangle
		=
			\sum_{k,l;k>l}
				\phi^{(\epsilon),\prime}_{kl}(x_k-x_l)
				(y_k-y_l)^2
		\leq
			0,\\
		\label{eq_1516866870}
		\sum_{j,k=1}^d
			a^{(\epsilon),\prime\prime}_{ijk}(x)y_jz_k
		=
			\sum_{l;l\neq i}
				\phi^{(\epsilon),\prime\prime}_{il}(x_i-x_l)
				(y_i-y_l)
				(z_i-z_l).
	\end{gather}
\end{lemma}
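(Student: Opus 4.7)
The plan is to mirror the proof of \lref{lem_1515650878} line by line, substituting the approximating quantities $\phi^{(\epsilon)}_{ij}$ and $a^{(\epsilon)}$ for $\phi_{ij}$ and $a$, and appealing to \lref{lem_1516867028} wherever the original proof invoked \lref{lem_1515144318}. More precisely, the algebraic identity \eqref{eq_7439104813} combined with the anti-symmetry relation $\phi^{(\epsilon)}_{ij}(\xi)=-\phi^{(\epsilon)}_{ji}(-\xi)$ of \lref{lem_1516867028}~\eqref{item_1516866814} gives the first equality in \eqref{eq_1516866861}, exactly as in the unapproximated case. The equality in \eqref{eq_1516866865} is obtained by first writing out $(a^{(\epsilon),\prime}(x)y)_i = \sum_{l; l \neq i} \phi^{(\epsilon),\prime}_{il}(x_i-x_l)(y_i-y_l)$ via \lref{lem_1516867028}~\eqref{item_1516866818}, and then applying \eqref{eq_7439104813} and the anti-symmetry of $\phi^{(\epsilon),\prime}_{ij}$. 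Finally, \eqref{eq_1516866870} follows by direct differentiation of $a^{(\epsilon)}_i(x)=\sum_{k;k\neq i}\phi^{(\epsilon)}_{ik}(x_i-x_k)$.

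The only genuine new content, compared with the proof of \lref{lem_1515650878}, is the two displayed inequalities. For \eqref{eq_1516866861}, fix $k>l$ and note that $x_k-x_l>0$ since $x\in\Delta_d$. If $x_k-x_l\geq \epsilon$, then \eqref{eq_1517275514} yields $\phi^{(\epsilon)}_{kl}(x_k-x_l)=\phi_{kl}(x_k-x_l)=\alpha_{kl}/(x_k-x_l)$, so $(x_k-x_l)\phi^{(\epsilon)}_{kl}(x_k-x_l)=\alpha_{kl}$. If instead $0<x_k-x_l<\epsilon$, then \eqref{eq_1517275550} yields $\phi^{(\epsilon)}_{kl}(x_k-x_l)\leq \phi_{kl}(x_k-x_l)=\alpha_{kl}/(x_k-x_l)$, again producing $(x_k-x_l)\phi^{(\epsilon)}_{kl}(x_k-x_l)\leq\alpha_{kl}$. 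Summing over pairs $k>l$ gives the desired bound.

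For \eqref{eq_1516866865}, the inequality reduces to $\phi^{(\epsilon),\prime}_{kl}(x_k-x_l)\leq 0$, which holds because $\phi^{(\epsilon)}_{kl}$ is non-increasing on $\RealNum$; this monotonicity was recorded in the remark immediately preceding the statement of the lemma, and it follows from the construction of $\phi^{(\epsilon)}_{kl}$ together with the sign of $\phi'_{kl}$ on $I_{kl}$.

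I do not anticipate a serious obstacle: once \lref{lem_1516867028} is in hand, the proof is essentially a transcription of the proof of \lref{lem_1515650878}, and the only step requiring thought is the split into the two regions $x_k-x_l<\epsilon$ and $x_k-x_l\geq \epsilon$ used for \eqref{eq_1516866861}, which is handled cleanly by \iref[eq_1517275514]{eq_1517275550}.
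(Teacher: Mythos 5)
Your proposal is correct and follows the same route as the paper: the equalities and the sign in \eqref{eq_1516866865} are obtained by transcribing the proof of \lref{lem_1515650878} with \lref{lem_1516867028} in place of \lref{lem_1515144318}, and the inequality in \eqref{eq_1516866861} comes from \iref[eq_1517275514]{eq_1517275550}, which give $\xi\phi^{(\epsilon)}_{kl}(\xi)\leq\alpha_{kl}$ for $\xi>0$. Your write-up simply makes explicit the case split ($\xi\geq\epsilon$ versus $0<\xi<\epsilon$) that the paper leaves implicit.
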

\begin{proof}[Proof of \lref{lem_1516867028}]
	Assertion~\iref{item_1516866814} is shown by induction.
	Indeed, we have
	\begin{align*}
		\phi^{(\epsilon)}_{ij}(\xi)
		&=
			-\phi_{ji}(-\epsilon)
			+
			\int_{\epsilon}^\xi
				\phi_{ji}'(-\rho_\epsilon(\eta))\,
				d\eta\\
		&=
			-\phi_{ji}(-\epsilon)
			-
			\int_{-\epsilon}^{-\xi}
				\phi_{ji}'(-\rho_\epsilon(-\eta))\,
				d\eta\\
		&=
			-\phi^{(\epsilon)}_{ji}(-\xi).
	\end{align*}
	This is Assertion~\iref{item_1516866814} for $n=0$.
	We can prove the assertion for $n\geq 1$ in the same way as for \lref{lem_1515144318}~\iref{item_1515480353}.
	Assertion~\iref{item_1516866818} is a consequence of Assertion~\iref{item_1516866814}.
\end{proof}

\begin{proof}[Proof of \lref{lem_1516866857}]
	From \eqref{eq_1517275514} and \eqref{eq_1517275550},
	we see that $\xi\phi^{(\epsilon)}_{ij}(\xi)\leq \alpha_{ij}$ for $i>j$.
	We can prove the same inequality for $i<j$ in the same way.
	These inequalities imply the inequality part of \eqref{eq_1516866861}.
	The other assertions are easily proved.
\end{proof}

\section{Malliavin Calculus for Non-colliding Particle Systems}\label{Sec_Malliavin}

\subsection{Basics of Malliavin Calculus}
We collect basic results on Malliavin calculus;
see \cite{MatsumotoTaniguchi2017,Shigekawa2004,Nualart2006,IkedaWatanabe1989} for more details.

Let $(\probSp,\sigmaField,\prob)$ be the canonical probability space;
that is, $\probSp=C([0,T];\RealNum^d)$, $\sigmaField$ is the Borel $\sigma$-field on $\probSp$,
and $\prob$ is the Wiener measure.
Set $W(\omega)=\omega$ for $\omega\in\probSp$.
Under the probability measure $\prob$, $W$ is a $d$-dimensional standard Brownian motion.
The Cameron--Martin space associated with $d$-dimensional standard Brownian motion
is denoted by $\CM$;
that is,
$\CM$ consists of
all elements $h\in\probSp$ that have a Radon--Nikodym derivative $\dot{h}$
with respect to the Lebesgue measure and $\dot{h}\in L^2([0,T];\RealNum^d)$.

Let $K$ be a real separable Hilbert space, $k\in\NaturalNum\cup\{0\}$, and $1<p<\infty$.
We denote by $\SobSp{k}{p}(K)$ the Sobolev space
of $K$-valued Wiener functionals defined on $(\probSp,\sigmaField,\prob)$ in the Malliavin sense
with differentiability index $k$ and integrability index $p$.
If there is no risk of confusion, we write simply $\SobSp{k}{p}=\SobSp{k}{p}(\RealNum)$.
We set $L^p(K)=\SobSp{0}{p}(K)$.
For $F\in\SobSp{k}{p}(K)$ and $0\leq l\leq k$,
$D^lF$ denotes the $l$th derivative of $F$.

We use the following sufficient condition
to ensure the existence and continuity of a density function of a Wiener functional.
\begin{proposition}[{\cite[Theorem~2.1]{FloritNualart1995}}, {\cite[Theorem~2.2]{Naganuma2013}}]\label{prop_1511420148}
	Let $1<p,q<\infty$ satisfy $1/p+1/q\leq 1$.
	Suppose $q>d$.
	A functional $F\in\SobSp{1}{p}(\RealNum^d)$ admits a continuous density function on $\RealNum^d$
	if there exists an $\CM^d$-valued Wiener functional $U=(U_1,\dots,U_d)\in\SobSp{1}{q}(\CM^d)$ such that
	$(DF_j,U_k)_\CM=\delta_{jk}$ a.s.\ for any $1\leq j,k\leq d$.
\end{proposition}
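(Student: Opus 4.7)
Since this proposition is cited from \cite{FloritNualart1995} and \cite{Naganuma2013}, the plan is to sketch the underlying argument, which proceeds via a single Malliavin integration by parts and then invokes Sobolev embedding in $\RealNum^d$ in an essential way. The plan has three stages: first, derive a one-step integration by parts formula from the dual-basis condition; second, use it to show that the law of $F$ has a density $p_F$ belonging to the Sobolev space $W^{1,q}(\RealNum^d)$; third, apply Morrey's embedding $W^{1,q}(\RealNum^d)\hookrightarrow C(\RealNum^d)$, which is valid precisely because of the hypothesis $q>d$.

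For the first stage, take $\varphi\in C_c^\infty(\RealNum^d)$ and compute by the Malliavin chain rule
$$D\bigl(\varphi(F)\bigr)=\sum_{j=1}^d \partial_j\varphi(F)\,DF_j.$$
Pairing with $U_k\in\CM$ and using the dual basis condition $(DF_j,U_k)_\CM=\delta_{jk}$ collapses the sum to $(D(\varphi(F)),U_k)_\CM=\partial_k\varphi(F)$. Taking expectations and invoking the duality between the Malliavin derivative $D$ and the Skorokhod divergence $\delta$---whose applicability is granted by the Meyer-type bound $\|\delta(U_k)\|_{L^q}\leq C\|U_k\|_{\SobSp{1}{q}(\CM)}$ together with the Hölder-type condition $1/p+1/q\leq 1$---yields
$$\expect\bigl[\partial_k\varphi(F)\bigr]=\expect\bigl[\varphi(F)\,\delta(U_k)\bigr], \qquad 1\leq k\leq d.$$

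For the second stage, interpret the identity distributionally: writing $\mu_F$ for the law of $F$ and $\nu_k$ for the finite signed measure $\nu_k(A)=-\expect[\indicator{A}(F)\,\delta(U_k)]$, it reads $\partial_k\mu_F=\nu_k$ in the sense of Schwartz distributions on $\RealNum^d$. Combining this with the $L^q$ integrability of each $\delta(U_k)$---via a Fourier-inversion/disintegration argument coupled with a Hölder estimate on test functions---produces a Lebesgue density $p_F$ for $\mu_F$ whose weak first partial derivatives lie in $L^q(\RealNum^d)$ and are given a.e.\ by $\partial_k p_F(x)=-\expect[\delta(U_k)\mid F=x]\,p_F(x)$. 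For the third stage, since $q>d$, Morrey's inequality gives the continuous embedding $W^{1,q}(\RealNum^d)\hookrightarrow C^{0,1-d/q}(\RealNum^d)$, so $p_F$ has a (locally Hölder) continuous representative, which is the desired conclusion.

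The principal technical obstacle is the second stage: a distributional derivative relation $\partial_k\mu_F=\nu_k$ with $\nu_k$ a finite signed measure does not by itself imply that $\mu_F$ is absolutely continuous, let alone that its density has an $L^q$ gradient. The refinement of Florit--Nualart and Naganuma lies precisely in carrying this step out under a \emph{single} integration by parts---rather than the iterated scheme of classical Malliavin theory---by exploiting the interplay between the integrability exponent $q$ and the spatial dimension $d$ via the threshold $q>d$, which simultaneously controls the Skorokhod integral in $L^q$ and supplies the Sobolev embedding into continuous functions.
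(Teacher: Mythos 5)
The paper itself offers no proof of this proposition --- it is imported verbatim from Florit--Nualart and Naganuma --- so your sketch has to stand on its own, and as written it has a genuine gap exactly where you locate it. Stage one is fine: for $\varphi\in C_c^\infty(\RealNum^d)$ the chain rule plus $(DF_j,U_k)_\CM=\delta_{jk}$ and the duality $\expect[(D(\varphi(F)),U_k)_\CM]=\expect[\varphi(F)\delta(U_k)]$ (legitimate since $\delta(U_k)\in L^q$ by Meyer's inequality and $1/p+1/q\leq 1$) gives $\expect[\partial_k\varphi(F)]=\expect[\varphi(F)\delta(U_k)]$. But stage two --- that the law of $F$ is absolutely continuous with density $p_F\in W^{1,q}(\RealNum^d)$ --- is not a consequence of this identity by any argument you actually give. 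The identity only yields, via Jensen, $\int|\expect[\delta(U_k)\mid F=x]|^q\,p_F(x)\,dx<\infty$, whereas $\partial_kp_F=-\expect[\delta(U_k)\mid F=x]\,p_F(x)\in L^q(\RealNum^d)$ requires $\int|\expect[\delta(U_k)\mid F=x]|^q\,p_F(x)^q\,dx<\infty$, which one can only extract by already knowing $\|p_F\|_\infty<\infty$ --- essentially the conclusion you are trying to reach. So the route ``$p_F\in W^{1,q}$, then Morrey'' is circular at its decisive step; your closing paragraph concedes this and defers the step to the cited papers, which means the proposal does not contain a proof. (A non-circular variant would require a bootstrap: first $p_F\in W^{1,1}\hookrightarrow L^{d/(d-1)}$, then H\"older on $\partial_kp_F=-g_kp_F$ with $g_kp_F^{1/q}\in L^q$ to raise the integrability of $\nabla p_F$, then Sobolev again, iterating until the exponent exceeds $d$; nothing of this kind appears in the sketch.)

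It is also worth noting that the actual proofs in the cited sources do not proceed through Sobolev regularity of $p_F$ at all. There the density is represented directly by one integration by parts against the Newtonian (Riesz) kernel $Q_d$: roughly $p_F(x)=\sum_{k}\expect[\partial_kQ_d(F-x)\,\delta(U_k)]$, and the hypothesis $q>d$ enters only to guarantee $\nabla Q_d\in L^{q/(q-1)}_{\mathrm{loc}}$ (since $q/(q-1)<d/(d-1)$), so that H\"older's inequality makes the expectation finite and continuity in $x$ follows from continuity of translations in $L^{q/(q-1)}$ together with uniform integrability; Naganuma's refinement localizes this scheme. If you want to salvage your outline, either carry out the bootstrap indicated above or switch to the kernel representation; as it stands, the central implication from the integration-by-parts identity to absolute continuity with an $L^q$ gradient is missing.
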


\subsection{Malliavin Differentiability}\label{sec_MalliavinDiff}
Let $X$ be a solution of \eqref{eq_NonColPartSys}.
In this subsection, we show Malliavin differentiability of $X(t)$ for every $0\leq t\leq T$ (\pref{prop_1511158825}).
Throughout of this subsection, we suppose that \aref{ass_1515645743}~\iref{item_1515735667} holds.
Before starting our discussion, we comment on the drift coefficients $f=a+b$ and $f^{(\epsilon)}=a^{(\epsilon)}+b$.
Combining \eqref{eq_1515216539} and the linear growth condition of $b$,
we obtain that there exists a positive constant $K$ such that
\begin{align}\label{eq_1517125352}
	2
	\sum_{i=1}^d
		x_i
		f_i(x)
	+
	|\sigma|^2
	=
		2
		\sum_{i=1}^d
			x_i
			a_i(x)
		+
		2
		\sum_{i=1}^d
			x_i
			b_i(x)
		+
		|\sigma|^2
	\leq
		K(1+|x|^2)
\end{align}
for any $x\in\Delta_d$.
In particular, $2\sum_{k,l;k>l}\alpha_{kl}+|\sigma|^2\leq K$ holds.
From \eqref{eq_1516866861} and the linear growth condition of $b$,
the same inequality holds for $f^{(\epsilon)}$,
and the constant $K$ is independent of $\epsilon$.
Since $b'$ is bounded, the constant
\begin{align}\label{eq_1539590587}
	M=\sup_{{x\in\RealNum^d}}|b'(x)|
\end{align}
is finite.

To express the derivative $DX(t)$,
we introduce processes $Y$ and $Z$ as solutions
to the following $\Mat{n}{\RealNum}$-valued stochastic ODEs:
\begin{gather*}
	dY(t)
	=
		+
		f'(X(t))
		Y(t)\,
		dt,
	\qquad
	Y_0
	=
		\idMat,\\
	dZ(t)
	=
		-
		Z(t)
		f'(X(t))\,
		dt,
	\qquad
	Z_0
	=
		\idMat.
\end{gather*}
For auxiliary consideration, we introduce solutions
$Y^{(\epsilon)}$ and $Z^{(\epsilon)}$ to $\Mat{n}{\RealNum}$-valued ODEs
\begin{gather*}
	dY^{(\epsilon)}(t)
	=
		+
		f^{(\epsilon),\prime}(X^{(\epsilon)}(t))
		Y^{(\epsilon)}(t)\,
		dt,
	\qquad
	Y^{(\epsilon)}_0
	=
		\idMat,\\
	%
	dZ^{(\epsilon)}(t)
	=
		-
		Z^{(\epsilon)}(t)
		f^{(\epsilon),\prime}(X^{(\epsilon)}(t))\,
		dt,
	\qquad
	Z^{(\epsilon)}_0
	=
		\idMat.
\end{gather*}
Here, $X^{(\epsilon)}$ is a solution to \eqref{eq_1515462872}.
Note that these four ordinary differential equations
admit unique solutions with probability one
because, for a fixed $\omega\in\probSp$, $f'(X(\bullet))(\omega)$
and $f^{(\epsilon),\prime}(X^{(\epsilon)}(\bullet))(\omega)$
are continuous on $[0,T]$ and thus bounded.
A simple calculation implies that $Z$ and $Z^{(\epsilon)}$
are the inverse matrices of $Y$ and $Y^{(\epsilon)}$, respectively.
The next lemma shows the relationship between $X$, $X^{(\epsilon)}$, and so on.
\begin{lemma}\label{lem_1515739775}
	Under \aref{ass_1515645743}~\iref{item_1515735667},
	there exists a random variable $0<\epsilon_0(T)<1$
	such that $X^{(\epsilon)}(t)=X(t)$, $Y^{(\epsilon)}(t)=Y(t)$, and $Z^{(\epsilon)}(t)=Z(t)$ hold
	for any $0\leq t\leq T$ and $0<\epsilon<\epsilon_0(T)$.
\end{lemma}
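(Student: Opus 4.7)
The plan is to exploit the fact that, under \aref{ass_1515645743}~\iref{item_1515735667}, the trajectory $t\mapsto X(t)$ is continuous and stays in $\Delta_d$ on $[0,T]$. Consequently, the minimum gap
\[
    \delta(T)
    :=
        \min_{\substack{1\leq i,j\leq d\\ i\neq j}}
        \inf_{0\leq t\leq T}
            |X_i(t)-X_j(t)|
\]
is attained on the compact interval $[0,T]$ and is strictly positive almost surely. I would set
\[
    \epsilon_0(T)
    :=
        \min\!\Bigl(\tfrac{1}{2},\,\delta(T)\Bigr),
\]
which is a random variable satisfying $0<\epsilon_0(T)<1$ almost surely. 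The strategy is to show that for $0<\epsilon<\epsilon_0(T)$ the drifts of \eqref{eq_NonColPartSys} and \eqref{eq_1515462872} agree along the trajectory of $X$; then $X$ itself solves the regularized SDE, pathwise uniqueness forces $X^{(\epsilon)}=X$, and the identification propagates to the linear ODEs defining $Y,Z,Y^{(\epsilon)},Z^{(\epsilon)}$.

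For the first step, fix $\omega$ in the almost-sure event and take $0<\epsilon<\epsilon_0(T)(\omega)$. For any $i>j$ and $t\in[0,T]$, $X_i(t)-X_j(t)\geq\delta(T)>\epsilon$, so the argument $\xi=X_i(t)-X_j(t)$ lies in the region where $\rho_\epsilon$ acts as the identity. From the definition
\[
    \phi^{(\epsilon)}_{ij}(\xi)
    =
        \phi_{ij}(\epsilon)
        +
        \int_{\epsilon}^\xi
            \phi'_{ij}(\rho_\epsilon(\eta))\,
            d\eta
    \quad (i>j),
\]
one obtains $\phi^{(\epsilon)}_{ij}(\xi)=\phi_{ij}(\xi)$ for $\xi\geq\epsilon$, and differentiation under the integral gives $\frac{d^n\phi^{(\epsilon)}_{ij}}{d\xi^n}(\xi)=\frac{d^n\phi_{ij}}{d\xi^n}(\xi)$ for all $n\geq 1$ on the same range. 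The analogous identity for $i<j$ on $(-\infty,-\epsilon]$ follows either directly (using $\rho_\epsilon(-\eta)=-\eta$ for $\eta\leq -\epsilon$) or from \lref{lem_1516867028}~\iref{item_1516866814} combined with \lref{lem_1515144318}~\iref{item_1515480353}. Summing in the definitions of $a^{(\epsilon)}_i$ and $a^{(\epsilon),\prime}_{ij}$, one concludes that $f^{(\epsilon)}(X(t))=f(X(t))$ and $f^{(\epsilon),\prime}(X(t))=f'(X(t))$ for every $t\in[0,T]$.

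The equalities of the lemma then follow by uniqueness. The identity $f^{(\epsilon)}(X(t))=f(X(t))$ shows that $X$ itself solves the regularized SDE \eqref{eq_1515462872} on $[0,T]$; since $f^{(\epsilon)}$ is smooth with derivatives of all orders bounded, that SDE enjoys pathwise uniqueness, whence $X^{(\epsilon)}(t)=X(t)$. Substituting this into the ODE for $Y^{(\epsilon)}$ and using $f^{(\epsilon),\prime}(X(t))=f'(X(t))$, both $Y^{(\epsilon)}$ and $Y$ satisfy the same linear matrix ODE with the same initial condition $\idMat$; uniqueness of solutions to linear ODEs with continuous coefficients yields $Y^{(\epsilon)}=Y$. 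The equality $Z^{(\epsilon)}=Z$ follows identically, or from the inverse-matrix relation noted immediately before the lemma.

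The one subtlety, and the main thing to be careful about, lies in the ordering of the argument: because $X$ and $X^{(\epsilon)}$ a priori solve different SDEs, pathwise uniqueness cannot be invoked for them directly. The resolution is that, once the drifts have been shown to agree along the range of $X$, the process $X$ itself becomes a solution of the regularized equation \eqref{eq_1515462872}, and uniqueness is then applied entirely within that single SDE, whose coefficients are globally smooth with bounded derivatives. No circularity arises, and the random threshold $\epsilon_0(T)$ enters only through the choice of sample path on which this matching is performed.
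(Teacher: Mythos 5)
Your argument is correct and rests on the same underlying facts as the paper's proof: positivity of the minimum gap of $X$ on the compact interval $[0,T]$ (guaranteed by \aref{ass_1515645743}~\iref{item_1515735667} together with continuity of paths), and agreement of $\phi^{(\epsilon)}_{ij}$ with $\phi_{ij}$ (and all derivatives) away from the collision boundary. Where you differ is in how the uniqueness step is organized, and your version is arguably the cleaner of the two. The paper introduces a stopping time $\tau^{(\epsilon)}$, namely the first time either $X$ or $X^{(\epsilon)}$ exits $\Delta_d^{(\epsilon)}$, invokes a local uniqueness argument on $[0,\tau^{(\epsilon)}]$, and then needs a bootstrap (if $X$ stays inside $\Delta_d^{(\epsilon)}$ on $[0,T]$, so does $X^{(\epsilon)}$ by the local identification, hence $\tau^{(\epsilon)}=T$) to extend the identification to all of $[0,T]$ for small random $\epsilon$. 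You instead observe that once $\epsilon<\delta(T)$, the path of $X$ stays in $\Delta_d^{(\epsilon)}$ throughout, so $X$ is already a global solution of the regularized SDE \eqref{eq_1515462872} on $[0,T]$; global pathwise uniqueness of that Lipschitz-coefficient SDE then gives $X^{(\epsilon)}=X$ with no localization and no double stopping time. The point you flag at the end — that uniqueness is invoked entirely within the single, smooth equation \eqref{eq_1515462872}, so no comparison across two distinct SDEs and no appeal to uniqueness for the singular system \eqref{eq_NonColPartSys} are required — is precisely the simplification over the paper's version. One minor precision worth noting: equality of $\phi^{(\epsilon)}_{ij}$ with $\phi_{ij}$ and of all their derivatives holds on the open set $\xi>\epsilon$ (resp.\ $\xi<-\epsilon$), and since the strict inequality $\delta(T)>\epsilon$ keeps the arguments $X_i(t)-X_j(t)$ strictly inside that range, this is exactly what is needed; your proof implicitly uses this.
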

\begin{proof}
	For every $0<\epsilon<\min_{2\leq i\leq d}(\bar{x}_i-\bar{x}_{i-1})\wedge 1$,
	we define a stopping time $\tau^{(\epsilon)}$ by
	\begin{align*}
		\tau^{(\epsilon)}
		=
			\inf\{0\leq t\leq T;X(t)\in\Delta_d^{(\epsilon)}\}
			\wedge
			\inf\{0\leq t\leq T;X^{(\epsilon)}(t)\in\Delta_d^{(\epsilon)}\},
	\end{align*}
	where
	$
		\Delta_d^{(\epsilon)}
		=
		\{(x_1,\dots,x_d)^\top\in\Delta_d;\min_{2\leq i\leq d} (x_i-x_{i-1})>\epsilon\}
	$.
	Because $f=f^{(\epsilon)}$ on $\Delta_d^{(\epsilon)}$,
	we see that $X(t)=X^{(\epsilon)}(t)$ holds for any $0\leq t\leq \tau^{(\epsilon)}$.
	\aref{ass_1515645743}~\iref{item_1515735667} ensures
	that a random variable
	$\epsilon_0(T):=\sup\{\epsilon \in (0,1);X_t=X_t^{(\epsilon)},~\forall t \in [0,T]\}$
	exists and it holds that $\tau^{(\epsilon)}=T$ for all $0<\epsilon<\epsilon_0(T)$.
	Hence, $X(t)=X^{(\epsilon)}(t)$ holds for all $0\leq t\leq T$ and $0<\epsilon<\epsilon_0(T)$.
	This implies that $Y(t)=Y^{(\epsilon)}(t)$ and $Z(t)=Z^{(\epsilon)}(t)$ hold
	for any $0\leq t\leq T$ and $0<\epsilon<\epsilon_0(T)$.
\end{proof}
We give estimates of $Y(t)Z(s)$ and $Y^{(\epsilon)}(t)Z^{(\epsilon)}(s)$ for $0\leq s\leq t\leq T$.
\begin{lemma}\label{lem_1516245889}
	For any $0\leq s\leq t\leq T$ and $v\in\RealNum^d$, we have
	\begin{align*}
		|Y(t)Z(s)|
		&\leq
			e^{M(t-s)}
			\sqrt{d},
		&
		|Y(t)Z(s)v|
		\leq
			e^{M(t-s)}|v|,\\
		|Y^{(\epsilon)}(t)Z^{(\epsilon)}(s)|
		&\leq
			e^{M(t-s)}
			\sqrt{d},
		&
		|Y^{(\epsilon)}(t)Z^{(\epsilon)}(s)v|
		\leq
			e^{M(t-s)}|v|.
	\end{align*}
	In particular, the absolute values of the eigenvalues of $Y(t)Z(s)$ and $Y^{(\epsilon)}(t)Z^{(\epsilon)}(s)$
	are less than or equal to $e^{M(t-s)}$.
	Here, $M$ is a non-negative constant defined by \eqref{eq_1539590587}.
\end{lemma}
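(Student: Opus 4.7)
The plan is to derive both estimates from the observation that, for fixed $s$, the matrix process $\Phi(t,s) := Y(t)Z(s)$ solves the linear matrix ODE
\begin{align*}
\frac{d}{dt}\Phi(t,s) = f'(X(t))\,\Phi(t,s), \qquad \Phi(s,s) = \idMat,
\end{align*}
which follows from $dY(t) = f'(X(t))Y(t)\,dt$ and the fact that $Z(s)$ is a constant in $t$. Then I would split $f' = a' + b'$ and exploit the two structural facts already available: by \lref{lem_1515650878}~\iref{item_1515216560}, $a'(X(t))$ is symmetric and negative semidefinite on $\RealNum^d$; and by the definition of $M$ in \eqref{eq_1539590587}, $|b'(x)|\leq M$ uniformly.

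First I would establish the vector bound. Fix $v\in\RealNum^d$ and set $u(t) = \Phi(t,s)v$, so $u(s)=v$ and $\dot u(t) = f'(X(t))u(t)$. Differentiating the squared norm,
\begin{align*}
\frac{d}{dt}|u(t)|^2
= 2\langle u(t), a'(X(t))u(t)\rangle + 2\langle u(t), b'(X(t))u(t)\rangle
\leq 0 + 2M|u(t)|^2,
\end{align*}
where the first term is non-positive by \eqref{eq_1515216560} and the second is controlled via Cauchy--Schwarz by $2|u(t)|\cdot|b'(X(t))u(t)|\leq 2M|u(t)|^2$. Gronwall's lemma then gives $|u(t)|^2\leq |v|^2 e^{2M(t-s)}$, which is the desired bound $|Y(t)Z(s)v|\leq e^{M(t-s)}|v|$.

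Next, the Frobenius-norm bound follows by applying the vector bound to the standard basis vectors $e_1,\dots,e_d$ and summing:
\begin{align*}
|Y(t)Z(s)|^2 = \sum_{j=1}^d |Y(t)Z(s)e_j|^2 \leq \sum_{j=1}^d e^{2M(t-s)} = d\,e^{2M(t-s)},
\end{align*}
so $|Y(t)Z(s)|\leq \sqrt{d}\,e^{M(t-s)}$. The eigenvalue bound is immediate: for any eigenpair $(\lambda,v)$ with $v\neq 0$, the vector estimate gives $|\lambda||v|=|Y(t)Z(s)v|\leq e^{M(t-s)}|v|$.

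Finally, the corresponding bounds for $Y^{(\epsilon)}(t)Z^{(\epsilon)}(s)$ are obtained by the identical argument with $a'$ replaced by $a^{(\epsilon),\prime}$, invoking \eqref{eq_1516866865} from \lref{lem_1516866857} in place of \eqref{eq_1515216560}; the constant $M$ is unchanged since only $b'$ controls the non-dissipative part. No substantial obstacle is anticipated — the proof is essentially a two-line energy estimate — the only point requiring care is verifying that $\Phi(t,s)$ satisfies the stated linear ODE in $t$ with initial datum $\idMat$ at $t=s$, which uses that $Z(s)$ is $t$-independent and $Y(s)Z(s)=\idMat$ (equivalently, that $Z$ is the inverse of $Y$, as noted earlier in the text).
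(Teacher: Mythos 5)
Your proof is correct and is essentially the paper's argument: the paper simply cites \pref{prop_1510907899} (with $L=0$, since \eqref{eq_1515216560} and \eqref{eq_1516866865} make $a'$ and $a^{(\epsilon),\prime}$ negative semidefinite, and $|b'|\leq M$), and the proof of that proposition is exactly your Gronwall energy estimate for $\tilde y(s,t)=Y(t)Z(s)$. Your only cosmetic deviations — obtaining the Frobenius bound by summing over basis vectors rather than differentiating $|\tilde y|^2$ directly, and inlining the appendix argument — change nothing of substance.
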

\begin{proof}
	It follows from \eqref{eq_1515216560} and \eqref{eq_1516866865}
	that the eigenvalues of $a'(x)$ and $a^{(\epsilon),\prime}(x)$
	are less than or equal to zero.
	Recall the boundedness of $b'$ and the definition of $M$.
	Using \pref{prop_1510907899}, we obtain the assertions.
\end{proof}

\begin{lemma}\label{lem_1516244752}
	Let $4\leq p<\infty$.
	We have
	\begin{align*}
		\expect\left[\sup_{0 \leq t \leq T} |X(t)|^p\right]
		&\leq
			C
			(1+|\bar{x}|^p),
		&
		\sup_{0<\epsilon<1}
			\expect\left[\sup_{0 \leq t \leq T}|X^{(\epsilon)}(t)|^p\right]
		&\leq
			C
			(1+|\bar{x}|^p),
	\end{align*}
	where $C$ is a positive constant that depends only on $p$, $T$, and $K$.
\end{lemma}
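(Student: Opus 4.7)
The plan is to run the standard $L^p$ moment estimate for both SDEs in parallel, using the monotonicity-type inequality \eqref{eq_1517125352} (for $X$) and its analogue for $X^{(\epsilon)}$ derived from \eqref{eq_1516866861}, observing that the constant $K$ is the same in both cases and independent of $\epsilon$. Because everything reduces to this single inequality, the bound for $X^{(\epsilon)}$ will be uniform in $\epsilon$.

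First, I would set $F(x)=(1+|x|^2)^{p/2}$ and apply It\^o's formula. The drift in It\^o's formula is
\begin{align*}
	LF(x)
	=
		p(1+|x|^2)^{p/2-1}
		\Bigl[\langle x,f(x)\rangle+\tfrac{1}{2}|\sigma|^2\Bigr]
		+
		\tfrac{p(p-2)}{2}(1+|x|^2)^{p/2-2}|\sigma^\top x|^2.
\end{align*}
Using \eqref{eq_1517125352} to bound the first bracket by $\tfrac{K}{2}(1+|x|^2)$ and the obvious estimate $|\sigma^\top x|^2\leq|\sigma|^2|x|^2\leq|\sigma|^2(1+|x|^2)$, one obtains $LF(x)\leq CF(x)$ for a constant $C$ depending only on $p$, $K$ (recall $p\geq4$ so $(1+|x|^2)^{p/2-1}\leq(1+|x|^2)^{p/2}$). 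The same computation works verbatim for $X^{(\epsilon)}$ since the analogue of \eqref{eq_1517125352} has the same $K$.

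Next, I would introduce the localizing stopping times $\tau_N=\inf\{t\leq T;|X(t)|\geq N\}$ to guarantee that all expectations below are finite, and apply It\^o up to $t\wedge\tau_N$. Taking $\sup$ and expectation, the drift part is handled by the above pointwise bound, and the martingale part $M(t)=p\int_0^t(1+|X(s)|^2)^{p/2-1}\langle X(s),\sigma\,dW(s)\rangle$ is controlled via Burkholder--Davis--Gundy:
\begin{align*}
	\expect\Bigl[\sup_{s\leq t\wedge\tau_N}|M(s)|\Bigr]
	\leq
		C\,\expect\Bigl[\Bigl(\sup_{s\leq t\wedge\tau_N}F(X(s))\Bigr)^{1/2}\Bigl(\textstyle\int_0^{t\wedge\tau_N}F(X(s))\,ds\Bigr)^{1/2}\Bigr],
\end{align*}
after which Young's inequality absorbs one half of $\expect[\sup_{s\leq t\wedge\tau_N}F(X(s))]$ into the left-hand side. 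Gr\"onwall's lemma then yields $\expect[\sup_{s\leq t\wedge\tau_N}F(X(s))]\leq CF(\bar{x})e^{Ct}$, uniformly in $N$; letting $N\to\infty$ by Fatou gives the desired bound since $F(\bar x)\leq 2^{p/2}(1+|\bar x|^p)$. The identical argument applied to $X^{(\epsilon)}$ (where the stopping times are automatic since the coefficients are globally smooth) gives the $\epsilon$-uniform bound.

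There is no real obstacle here; the only point requiring care is the localization, which is needed because \textit{a priori} the moments on the right-hand side might be infinite when one first writes the estimate. Once localization is in place, the uniform-in-$\epsilon$ part follows for free from the uniformity of $K$ in \lref{lem_1516866857}.
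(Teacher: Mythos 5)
Your proposal is correct and takes essentially the same route as the paper: It\^o's formula combined with the one-sided growth bound \eqref{eq_1517125352} (and its $\epsilon$-uniform analogue from \eqref{eq_1516866861}, with the same constant $K$), then the Burkholder--Davis--Gundy inequality and Gronwall's lemma. The only cosmetic differences are that you apply It\^o directly to $(1+|x|^2)^{p/2}$ with a Young-inequality absorption and explicit localization, whereas the paper applies It\^o to $|x|^2$, raises to the power $p/2$, and uses Jensen's inequality; both are routine variants of the same estimate.
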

\begin{proof}
	Because we can give estimates of $\expect[\sup_{0 \leq t \leq T}|X(t)|^p]$ and $\expect[\sup_{0 \leq t \leq T}|X^{(\epsilon)}(t)|^p]$ in the same way,
	we consider $\expect[\sup_{0 \leq t \leq T}|X^{(\epsilon)}(t)|^p]$ only.
	In this proof, $C'$ and $C''$ are positive constants that depend only on $p$, $T$, and $K$.

	Applying It\^o's formula to \eqref{eq_1515462872}, we have
	$
		|X^{(\epsilon)}(t)|^2
		=
			|\bar{x}|^2
			+
			A^{(\epsilon)}(t)
			+
			M^{(\epsilon)}(t)
	$,
	where
	\begin{align*}
		A^{(\epsilon)}(t)
		&=
			\int_0^t
				\bigg\{
					2
						\sum_{i=1}^d
						X^{(\epsilon)}_i(s)
						f^{(\epsilon)}_i(X^{(\epsilon)}(s))
					+
					|\sigma|^2
				\bigg\}\,
				ds,\\
		M^{(\epsilon)}(t)
		&=
			2
			\sum_{i=1}^d
			\sum_{k=1}^d
				\int_0^t
					X^{(\epsilon)}_i(s)
					\sigma_{ik}\,
					dW_k(s).
	\end{align*}
	Recalling \eqref{eq_1517125352} and setting
	$
		\tilde{A}^{(\epsilon)}(t)
		=
			K
			\int_0^t
				(1+|X^{(\epsilon)}(s)|^2)\,
				ds
	$,
	we see that
	$
		A^{(\epsilon)}(t)
		\leq
			\tilde{A}^{(\epsilon)}(t)
	$.
	Hence,
	$
		|X^{(\epsilon)}(t)|^2
		\leq
			|\bar{x}|^2
			+
			\tilde{A}^{(\epsilon)}(t)
			+
			M^{(\epsilon)}(t)
	$,
	which implies
	$
		|X^{(\epsilon)}(t)|^p
		\leq
			3^{p/2-1}
			\{
				|\bar{x}|^p
				+
				|\tilde{A}^{(\epsilon)}(t)|^{p/2}
				+
				|M^{(\epsilon)}(t)|^{p/2}
			\}
	$.

	We estimate the expectations
	of $\sup_{0 \leq t \leq T}|\tilde{A}^{(\epsilon)}(t)|^{p/2}$ and $\sup_{0 \leq t \leq T}|M^{(\epsilon)}(t)|^{p/2}$.
	From the Jensen inequality, we have
	\begin{align*}
		\expect
		\left[
			\sup_{0 \leq u \leq t}|\tilde{A}^{(\epsilon)}(u)|^{p/2}
		\right]
		&\leq
			C'
			\bigg(
				1
				+
				\int_0^t
					\expect
						\left[
							\sup_{0 \leq u \leq s}|X^{(\epsilon)}(u)|^p
						\right]\,
					ds
			\bigg).
	\end{align*}
	Note that
	$
		\langle M^{(\epsilon)}\rangle(t)
		=
			4
			\int_0^t
				|(X^{(\epsilon)}(s))^\top \sigma|^2\,
				ds
		\leq
			4
			\int_0^t
				|X^{(\epsilon)}(s)|^2|\sigma|^2\,
				ds
	$
	holds
	and that
	$
		2\xi^2\eta^2
		\leq
			\xi^4+\eta^4
	$
	for any $\xi,\eta\in\RealNum$.
	Using these inequalities, the Burkholder--Davis--Gundy inequality, and the Jensen inequality, we have
	\begin{align*}
		\expect
			\left[
				\sup_{0 \leq u \leq t}|M^{(\epsilon)}(u)|^{p/2}
			\right]
		\leq
			C_p
			\expect[\langle M^{(\epsilon)} \rangle(t)^{p/4}]
		\leq
			C''
			\bigg(
				1
				+
				\int_0^t
					\expect
						\left[
							\sup_{0 \leq u \leq s} |X^{(\epsilon)}(u)|^p
						\right]\,
					ds
			\bigg).
	\end{align*}
	Here, $C_p$ is a constant that appears in the Burkholder--Davis--Gundy inequality
	and depends only on $p$.

	Combining the above, we obtain
	\begin{align*}
		\expect
			\left[
				\sup_{0 \leq u \leq t}|X^{(\epsilon)}(u)|^p
			\right]
		\leq
			3^{p/2-1}
			\left\{
				|\bar{x}|^p
				+
				(C'+C'')
				\bigg(
					1
					+
					\int_0^t
						\expect
							\left[
								\sup_{0 \leq u \leq s}|X^{(\epsilon)}(u)|^p
							\right]\,
						ds
				\bigg)
			\right\}.
	\end{align*}
	This and Gronwall's inequality imply the assertion.
\end{proof}

From \lref[lem_1516245889]{lem_1516244752},
we obtain the next result on the differentiability of $X^{(\epsilon)}(t)$.
\begin{lemma}\label{lem_1512025560}
	Let $0\leq t\leq T$ and $1<p<\infty$.
	We have $X^{(\epsilon)}(t)\in\SobSp{1}{p}(\RealNum^d)$
	and
	\begin{align}\label{eq_1511160310}
		(DX^{(\epsilon)}_i(t))_n
		=
			\int_0^\cdot
				\indicator{[0,t]}(s)
				(
					Y^{(\epsilon)}(t)
					Z^{(\epsilon)}(s)
					\sigma
				)_{in}\,
				ds.
	\end{align}
	Furthermore, it holds that
	\begin{align}\label{eq_1516244784}
		\sup_{0<\epsilon<1}
			\|X^{(\epsilon)}(t)\|_{\SobSp{1}{p}(\RealNum^d)}
		\leq
			C,
	\end{align}
	where $C$ is a positive constant that depends only on $|\bar{x}|$, $K$, $M$, $T$, and $p$.
\end{lemma}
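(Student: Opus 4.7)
The plan is to treat the Malliavin differentiability of $X^{(\epsilon)}(t)$ by classical means and then read off the uniform-in-$\epsilon$ estimate from the representation formula, using the two preparatory lemmas \lref{lem_1516245889} and \lref{lem_1516244752}.

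First I would invoke the standard theory of Malliavin calculus for SDEs with smooth coefficients (for example the arguments in \cite[Section~2.2]{Nualart2006}): since the drift $f^{(\epsilon)}$ and the constant diffusion matrix $\sigma$ are smooth with derivatives of all orders bounded (the bounds may depend on $\epsilon$, but that is irrelevant for establishing membership in $\SobSp{1}{p}$), the solution $X^{(\epsilon)}(t)$ of \eqref{eq_1515462872} lies in $\SobSp{1}{p}(\RealNum^d)$ for every $1<p<\infty$ and its Malliavin derivative satisfies the SDE obtained by formal differentiation. Solving this linear SDE using the fundamental matrix $Y^{(\epsilon)}$ and its inverse $Z^{(\epsilon)}$ by the variation of constants method yields precisely the claimed identity \eqref{eq_1511160310}.

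The second and more delicate part is to establish the bound \eqref{eq_1516244784} uniformly in $\epsilon$. Here the key observation is that the standard Malliavin norm bounds depending on the $C^1$-norm of $f^{(\epsilon)}$ are useless since $f^{(\epsilon),\prime}$ is not uniformly bounded in $\epsilon$; however, the representation \eqref{eq_1511160310} allows us to bypass this. From \lref{lem_1516245889} we have the pointwise estimate
\begin{align*}
|Y^{(\epsilon)}(t) Z^{(\epsilon)}(s) \sigma|
\leq
e^{M(t-s)} \sqrt{d}\, |\sigma|,
\end{align*}
which is deterministic and independent of $\epsilon$. Plugging this into \eqref{eq_1511160310} and using Jensen's inequality, we obtain a deterministic bound
\begin{align*}
\|DX^{(\epsilon)}_i(t)\|_{\CM^d}^2
\leq
d^2 |\sigma|^2
\int_0^t e^{2M(t-s)}\,ds
\leq
C(d,M,T,|\sigma|),
\end{align*}
so in particular $\expect[\|DX^{(\epsilon)}(t)\|_{\CM^d}^p]$ is uniformly bounded. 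Combined with the $L^p$-bound $\expect[|X^{(\epsilon)}(t)|^p]\leq C(1+|\bar{x}|^p)$ provided by \lref{lem_1516244752}, this yields the required uniform estimate on $\|X^{(\epsilon)}(t)\|_{\SobSp{1}{p}(\RealNum^d)}$ with a constant depending only on $|\bar{x}|$, $K$, $M$, $T$, and $p$.

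The only subtlety is to make sure that each step of the classical derivation is applicable despite the fact that the constants associated with $f^{(\epsilon)}$ degenerate as $\epsilon\downarrow 0$: once the existence and the representation formula \eqref{eq_1511160310} are granted (which hold for each fixed $\epsilon$), the uniform control comes entirely from \lref[lem_1516245889]{lem_1516244752}, whose constants are already $\epsilon$-independent. I expect no significant obstacle beyond this verification.
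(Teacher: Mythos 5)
Your proposal is correct and follows essentially the same route as the paper: invoke the classical theory to obtain membership in $\SobSp{1}{p}$ and the representation formula \eqref{eq_1511160310}, then observe that \lref{lem_1516245889} gives a deterministic, $\epsilon$-independent pointwise bound on $Y^{(\epsilon)}(t)Z^{(\epsilon)}(s)\sigma$, so that $\|DX^{(\epsilon)}(t)\|_{\CM^d}$ is uniformly bounded, and combine with \lref{lem_1516244752}. The only cosmetic differences are that the paper bounds $|Y^{(\epsilon)}(t)Z^{(\epsilon)}(s)\sigma|$ column-by-column via the vector estimate (avoiding the extra $\sqrt{d}$), and your appeal to Jensen's inequality is unnecessary since the $\CM^d$-norm is already an integral of the pointwise norm.
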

\begin{proof}
	We have $X^{(\epsilon)}(t)\in\SobSp{1}{p}(\RealNum^d)$ and \eqref{eq_1511160310}
	from the standard theory of Malliavin calculus;
	see \cite[Theorem~5.5.1]{MatsumotoTaniguchi2017}, \cite[Theorem~2.2.1]{Nualart2006}.

	We estimate $\expect[\|DX^{(\epsilon)}(t)\|_{\CM^d}^p]$.
	Because the boundedness of $Y^{(\epsilon)}(t)Z^{(\epsilon)}(s)$ is deduced from \lref{lem_1516245889},
	we obtain
	\begin{align*}
		\|DX^{(\epsilon)}(t)\|_{\CM^d}^2
		=
			\int_0^t
				|Y^{(\epsilon)}(t)Z^{(\epsilon)}(s)\sigma|^2\,
				ds
		\leq
			\int_0^t
				e^{2M(t-s)}
				|\sigma|^2\,
				ds
		=
			\frac{e^{2Mt}-1}{2M}
			|\sigma|^2,
	\end{align*}
	where $(e^{2Mt}-1)/(2M)=t$ for $M=0$.
	This implies
	\begin{align*}
		\expect[\|DX^{(\epsilon)}(t)\|_{\CM^d}^p]
		\leq
			\bigg(
				\frac{e^{2MT}-1}{2M}
				|\sigma|^2
			\bigg)^{p/2}.
	\end{align*}
	Combining this estimate and \lref{lem_1516244752}, we obtain \eqref{eq_1516244784}.
\end{proof}

Then, we obtain the following results on the differentiability of $X(t)$
and the expression of its derivative $DX(t)$.
\begin{proposition}\label{prop_1511158825}
	Let $0\leq t\leq T$ and $1<p<\infty$.
	Under \aref{ass_1515645743}~\iref{item_1515735667},
	we have $X(t)\in\SobSp{1}{p}(\RealNum^d)$
	and
	\begin{align}\label{eq_1515734562}
		(DX_i(t))_n
		=
			\int_0^\cdot
				\indicator{[0,t]}(s)
				(
					Y(t)
					Z(s)
					\sigma
				)_{in}\,
				ds.
	\end{align}
\end{proposition}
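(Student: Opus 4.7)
The plan is to pass to the limit $\epsilon \downarrow 0$ in the approximating SDEs and exploit the closability of the Malliavin derivative $D$. All the necessary ingredients are already in place: Lemma \ref{lem_1515739775} gives pointwise (almost sure) coincidence $X^{(\epsilon)}(t) = X(t)$, $Y^{(\epsilon)}(t) = Y(t)$, $Z^{(\epsilon)}(t) = Z(t)$ for $\epsilon < \epsilon_0(T)$; Lemma \ref{lem_1516244752} supplies uniform $L^p$ moment bounds for $X^{(\epsilon)}(t)$; Lemma \ref{lem_1512025560} gives uniform $\mathcal{D}^{1,p}$ bounds and the explicit formula \eqref{eq_1511160310}; and Lemma \ref{lem_1516245889} provides a \emph{deterministic} $L^\infty$ bound on $Y^{(\epsilon)}(t) Z^{(\epsilon)}(s)\sigma$, uniform in $\epsilon$ and $s \leq t$.

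First I would establish the $L^p$-convergence $X^{(\epsilon)}(t) \to X(t)$. From \lref{lem_1515739775} we obtain pointwise convergence almost surely. Combined with the uniform bound $\sup_{0<\epsilon<1} \expect[|X^{(\epsilon)}(t)|^{p+1}] < \infty$ from \lref{lem_1516244752}, uniform integrability upgrades this to strong convergence in $L^p(\RealNum^d)$.

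Next I would establish strong convergence of the Malliavin derivatives in $L^p(\CM^d)$. For each $s \leq t$ and $\omega$, \lref{lem_1515739775} gives $Y^{(\epsilon)}(t)Z^{(\epsilon)}(s)\sigma \to Y(t)Z(s)\sigma$ (eventually coinciding as $\epsilon \downarrow 0$), and this integrand is bounded pointwise in $(s,\omega,\epsilon)$ by the deterministic constant $e^{MT}\sqrt{d}\,|\sigma|$ from \lref{lem_1516245889}. Dominated convergence applied to \eqref{eq_1511160310} then yields, in $\CM^d$ and in fact almost surely,
\begin{align*}
    DX^{(\epsilon)}_i(t) \longrightarrow U_i(t) := \int_0^\cdot \indicator{[0,t]}(s)\,(Y(t)Z(s)\sigma)_{in}\,ds,
\end{align*}
and the same deterministic bound gives $L^p(\CM^d)$ convergence by bounded convergence.

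Finally, I would invoke the closability of the Malliavin derivative $D \colon \SobSp{1}{p}(\RealNum^d) \to L^p(\CM^d)$ (see e.g.\ \cite{Nualart2006}). Since $X^{(\epsilon)}(t) \to X(t)$ in $L^p(\RealNum^d)$ and $DX^{(\epsilon)}(t) \to U(t)$ in $L^p(\CM^d)$, closedness forces $X(t) \in \SobSp{1}{p}(\RealNum^d)$ with $DX(t) = U(t)$, which is precisely the formula \eqref{eq_1515734562}. There is no real obstacle here beyond bookkeeping; the only subtle point is that the bound on $Y^{(\epsilon)}(t)Z^{(\epsilon)}(s)$ in \lref{lem_1516245889} is deterministic, which is what allows the dominated-convergence step for the derivatives to go through cleanly without appealing to weak compactness and a subsequence argument.
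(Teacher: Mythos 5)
Your proposal is correct and follows essentially the same route as the paper: pass to the limit $\epsilon \downarrow 0$ in the approximating SDEs using the a.s.\ coincidence from \lref{lem_1515739775}, upgrade to $L^p$-convergence of $X^{(\epsilon)}(t)$ and $DX^{(\epsilon)}(t)$ via the uniform bounds of \lref[lem_1516245889]{lem_1516244752}, and conclude by closability of $D$. Your use of the deterministic bound from \lref{lem_1516245889} for the derivative term (bounded convergence) is a slightly more direct way to obtain the same uniform integrability the paper invokes.
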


\begin{proof}
	\lref[lem_1515739775]{lem_1512025560} imply
	\begin{align*}
		\lim_{\epsilon\downarrow 0}
			X^{(\epsilon)}(t)
		&=
			X(t)
		\quad
		\text{a.s.,}
		&
		\lim_{\epsilon\downarrow 0}
			DX^{(\epsilon)}(t)
		&=
			\text{(RHS of \eqref{eq_1515734562})}
		\quad
		\text{a.s.}
	\end{align*}
	\lref{lem_1516244752} implies the uniform integrability of $\{X^{(\epsilon)}(t)\}_{0<\epsilon<1}$ and $\{DX^{(\epsilon)}(t)\}_{0<\epsilon<1}$.
	Hence,
	\begin{align*}
		\lim_{\epsilon\downarrow 0}
			X^{(\epsilon)}(t)
		&=
			X(t)
		\quad
		\text{in $L^p(\RealNum^d)$,}
		&
		\lim_{\epsilon\downarrow 0}
			DX^{(\epsilon)}(t)
		&=
			\text{(RHS of \eqref{eq_1515734562})}
		\quad
		\text{in $L^p(\CM^d)$}.
	\end{align*}
	Combining these $L^p$-convergences
	with the closability of $D$ (\cite[Proposition~1.2.1]{Nualart2006}, \cite[Corollary~4.14]{Shigekawa2004}),
	we obtain the assertion.
\end{proof}

\subsection{Non-degeneracy}
For every $0<t\leq T$, we show the existence and continuity of the density of a solution $X(t)$ to \eqref{eq_NonColPartSys}.
To prove this assertion, we find an $\CM^d$-valued Wiener functional $U$ that
satisfies the assumption of \pref{prop_1511420148}.
Throughout of this subsection, we suppose that \aref{ass_1515645743} holds.

We set
\begin{align*}
	\gamma
	&=
		\idMat-e^{-(M+1)t}Y(t),
	&
	u_k
	&=
		(u_{1k},\dots,u_{dk})^\top,
	\quad
	k=1,\dots,d,
\end{align*}
where $M$ is a non-negative constant defined by \eqref{eq_1539590587} and
\begin{align}\label{eq_1515996735}
	u_{n k}
	=
		\int_0^\cdot
			\indicator{[0,t]}(s)
			\left(
				\sigma^{-1}
				\{(M+1)\idMat-f'(X(s))\}
			\right)_{n k}
			e^{-(M+1)(t-s)}
			\,
			ds,
	\quad
	n=1,\dots,d.
\end{align}
Here, $\gamma$ depends on $t$; however, we suppress $t$ for notational simplicity.
In what follows, we show that an $\CM^d$-valued Wiener functional $U=(U_1,\dots,U_d)$ defined by
\begin{align*}
	U_j
	&=
		\sum_{k=1}^d
			u_k(\gamma^{-1})_{kj},
	\quad
	j=1,\dots,d,
\end{align*}
satisfies the assumption of \pref{prop_1511420148}.

First, we show that $\gamma$ is invertible.
\begin{lemma}\label{lem_1511946573}
	 Under \aref{ass_1515645743}, $|\det\gamma|\geq (1-e^{-t})^d$.
\end{lemma}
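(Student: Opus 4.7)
The plan is to reduce the estimate on $|\det \gamma|$ to a spectral bound on $Y(t)$ coming from \lref{lem_1516245889}. Since $Y(t)$ need not be normal, eigenvalues may be complex, but the determinant of $\gamma$ is still controlled by those eigenvalues.

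First, I would let $\lambda_1,\dots,\lambda_d \in \mathbf{C}$ denote the eigenvalues of $Y(t)$ (counted with algebraic multiplicity), working over $\mathbf{C}$. Applying \lref{lem_1516245889} with $s=0$, and recalling that $Z(0)=\idMat$, one gets that every eigenvalue of $Y(t)=Y(t)Z(0)$ satisfies $|\lambda_j|\leq e^{Mt}$. This is the only analytic input needed.

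Next, observe that $\gamma = \idMat - e^{-(M+1)t}Y(t)$ is a polynomial in $Y(t)$, so its eigenvalues are exactly $1 - e^{-(M+1)t}\lambda_j$ for $j=1,\dots,d$. By the reverse triangle inequality,
\begin{align*}
    |1 - e^{-(M+1)t}\lambda_j|
    \geq 1 - e^{-(M+1)t}|\lambda_j|
    \geq 1 - e^{-(M+1)t}\cdot e^{Mt}
    = 1 - e^{-t},
\end{align*}
which is non-negative. Taking the product over $j=1,\dots,d$ and using $|\det\gamma| = \prod_{j=1}^d |1 - e^{-(M+1)t}\lambda_j|$ yields the desired lower bound $(1-e^{-t})^d$.

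There is no real obstacle here; the only subtle point is that $Y(t)$ is not symmetric so its eigenvalues may be complex, but this is handled by working over $\mathbf{C}$ and applying the ordinary triangle inequality to moduli. The key nontrivial ingredient, namely the eigenvalue bound $|\lambda_j|\leq e^{Mt}$, is already in hand from \lref{lem_1516245889}, and the factor $e^{-(M+1)t}$ in the definition of $\gamma$ was chosen precisely so that $e^{-(M+1)t}|\lambda_j| \leq e^{-t} < 1$ uniformly in $\omega$, which forces $\gamma$ away from singularity with a deterministic quantitative margin.
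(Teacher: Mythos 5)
Your proposal is correct and follows essentially the same route as the paper: both take the eigenvalue bound $|\lambda_j|\leq e^{Mt}$ from \lref{lem_1516245889} with $s=0$, identify the eigenvalues of $\gamma$ as $1-e^{-(M+1)t}\lambda_j$ (the paper via the characteristic polynomial evaluated at $\xi=e^{(M+1)t}$, you via the spectral mapping for a polynomial in $Y(t)$, which is the same computation), and bound each modulus below by $1-e^{-t}$ before taking the product.
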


\begin{proof}
	We denote the eigenvalues of $Y(t)$, which may be complex number, by $\lambda_1,\dots,\lambda_d$.
	We then have $|\lambda_i|\leq e^{Mt}$ for all $1\leq i\leq d$ from \lref{lem_1516245889} with $s=0$.
	Hence, $|1-e^{-(M+1)t}\lambda_i|\geq 1-e^{-t}$.
	Noting that
	$
		\det\gamma
		=
		e^{-d(M+1)t}
		\det(e^{(M+1)t}\idMat-Y(t))
	$
	and
	putting $\xi=e^{(M+1)t}$ into the polynomial $\det(\xi\idMat-Y(t))=\prod_{i=1}^d(\xi-\lambda_i)$,
	we obtain
	\begin{align*}
		\det\gamma
		=
			e^{-d(M+1)t}
			\prod_{i=1}^d(e^{(M+1)t}-\lambda_i)
		=
			\prod_{i=1}^d
				(1-e^{-(M+1)t}\lambda_i).
	\end{align*}
	Combining the above, we obtain the assertion.
\end{proof}

Next, we study the differentiability of $Y(t)$ in the Malliavin sense.
\begin{lemma}\label{lem_1511426245}
	Under \aref{ass_1515645743}, $Y(t)\in\SobSp{1}{2q}(\RealNum^{d^2})$.
\end{lemma}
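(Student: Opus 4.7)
The plan is to derive the claim from the analogous statement for the smooth approximation $Y^{(\epsilon)}(t)$ and to pass to the limit $\epsilon\downarrow 0$ via closability of the Malliavin derivative. Because $f^{(\epsilon)}$ is smooth with bounded derivatives of all orders, the standard Malliavin calculus for ODEs with smooth coefficients (cf.\ \cite[Theorem~5.5.1]{MatsumotoTaniguchi2017}) gives $Y^{(\epsilon)}(t)\in\SobSp{1}{p}(\RealNum^{d^2})$ for every $1<p<\infty$. Malliavin-differentiating the matrix ODE defining $Y^{(\epsilon)}$ and solving the resulting inhomogeneous linear equation by variation of parameters with fundamental matrix $Y^{(\epsilon)}(t)Z^{(\epsilon)}(u)$ produces the representation
\begin{align*}
D_sY^{(\epsilon)}(t)
=
\indicator{[0,t]}(s)\int_s^t
Y^{(\epsilon)}(t)Z^{(\epsilon)}(u)\bigl[f^{(\epsilon),\prime\prime}(X^{(\epsilon)}(u))\cdot\bigl(Y^{(\epsilon)}(u)Z^{(\epsilon)}(s)\sigma\bigr)\bigr]Y^{(\epsilon)}(u)\,du,
\end{align*}
where $f^{(\epsilon),\prime\prime}\cdot v$ denotes contraction of the third index of the Hessian tensor against the vector $v$; the middle factor comes from \lref{lem_1512025560}.

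Next I would establish a uniform $L^{1}$ bound on $\|DY^{(\epsilon)}(t)\|_{\CM^{d^2}}^{2q}$. Using the matrix bounds from \lref{lem_1516245889} together with Cauchy--Schwarz and Fubini in the time variables yields
\begin{align*}
\|DY^{(\epsilon)}(t)\|_{\CM^{d^2}}^{2q}
\leq
C\int_0^t\bigl|f^{(\epsilon),\prime\prime}(X^{(\epsilon)}(u))\bigr|^{2q}\,du
\end{align*}
for a constant depending only on $T$, $M$, $d$, $\sigma$, and $q$. Splitting $f^{(\epsilon),\prime\prime}=a^{(\epsilon),\prime\prime}+b''$, the contribution of $b''$ is harmless, and \lref{lem_1516866857}~\iref{eq_1516866870} expands $|a^{(\epsilon),\prime\prime}|^{2q}$ into a sum of $|\phi^{(\epsilon),\prime\prime}_{il}(X^{(\epsilon)}_i(u)-X^{(\epsilon)}_l(u))|^{2q}$. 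The explicit form of $\phi^{(\epsilon)}_{ij}$ combined with $\rho_\epsilon(\xi)\geq\xi$ for $\xi>0$ yields the pointwise bound $|\phi^{(\epsilon),\prime\prime}_{il}(\xi)|\leq 2\alpha_{il}/\xi^{3}$ for $\xi>0$ (and $\phi^{(\epsilon),\prime\prime}_{il}(\xi)=0$ for $\xi\leq 0$ when $i>l$). On the event $\{\epsilon<\epsilon_0(\omega)\}$, \lref{lem_1515739775} identifies $X^{(\epsilon)}$ with $X\in\Delta_d$, so \aref{ass_1515645743}~\iref{item_1515740401} controls the expectation there with a bound independent of $\epsilon$.

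To conclude I would invoke closability. By \lref{lem_1515739775}, $Y^{(\epsilon)}(t)\to Y(t)$ and $DY^{(\epsilon)}(t)\to V(t)$ hold almost surely as $\epsilon\downarrow 0$, where $V(t)$ is the limiting expression obtained by dropping the $\epsilon$ superscripts; the same splitting argument shows $V(t)\in L^{2q}(\CM^{d^2})$. The a priori bound $|Y^{(\epsilon)}(t)|\leq e^{MT}\sqrt{d}$ from \lref{lem_1516245889} upgrades the first convergence to $L^{2q}$ by dominated convergence, while the uniform $L^{2q}$-estimate of the preceding step, together with almost-sure convergence and the reflexivity of $L^{2q}(\CM^{d^2})$, delivers weak $L^{2q}$-convergence of the derivatives to $V(t)$. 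Closability of $D$ (\cite[Proposition~1.2.1]{Nualart2006}) then gives $Y(t)\in\SobSp{1}{2q}(\RealNum^{d^2})$ with $DY(t)=V(t)$. The hard part will be making the $L^{2q}$-estimate genuinely uniform on the complementary event $\{\epsilon_0(\omega)\leq\epsilon\}$, where $X^{(\epsilon)}$ may leave $\Delta_d$ so that the inverse moment assumption no longer applies to $X^{(\epsilon)}$ directly; I expect to resolve this by balancing the deterministic blow-up $|\phi^{(\epsilon),\prime\prime}_{il}|\leq C/\epsilon^{3}$ against the decay of $\prob(\epsilon_0\leq\epsilon)$ as $\epsilon\downarrow 0$, possibly after passing to a judicious subsequence $\epsilon_n\downarrow 0$.
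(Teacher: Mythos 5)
Your plan hits exactly the obstruction you flag at the end, and as written it is a genuine gap rather than a technicality. By approximating with $Y^{(\epsilon)}$ — the flow driven by $X^{(\epsilon)}$ — you are forced to control $\phi^{(\epsilon),\prime\prime}$ evaluated at $X^{(\epsilon)}_i(u)-X^{(\epsilon)}_l(u)$, and on the event $\{\epsilon_0(\omega)\leq\epsilon\}$ these differences can be near or below zero, where \aref{ass_1515645743}~\iref{item_1515740401} tells you nothing. There is no quantitative estimate of $\prob(\epsilon_0\leq\epsilon)$ anywhere in the paper, so the proposed balancing of a deterministic $\epsilon^{-3}$ blow-up against the shrinking probability of the bad event is a new argument you would have to supply from scratch; nothing in the existing machinery delivers it.

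The paper avoids the issue entirely by choosing a different auxiliary process. Instead of $Y^{(\epsilon)}$, it introduces $\mathcal{Y}^{(\epsilon)}$ solving
\begin{align*}
	d\mathcal{Y}^{(\epsilon)}(t)
	=
		f^{(\epsilon),\prime}(X(t))\,
		\mathcal{Y}^{(\epsilon)}(t)\,
		dt,
	\qquad
	\mathcal{Y}^{(\epsilon)}_0=\idMat,
\end{align*}
that is, the \emph{approximated coefficient evaluated along the true path} $X$, not along $X^{(\epsilon)}$. This buys two things at once. First, $X(s)\in\Delta_d$ a.s., so for $k>l$ one always has $X_k(s)-X_l(s)>0$, and the pointwise bound $\phi^{(\epsilon),\prime\prime}_{kl}(\xi)\leq 2\alpha_{kl}/\xi^3$ for $\xi>0$ applies uniformly in $\epsilon$; the inverse-moment assumption on $X$ then controls everything directly and there is no bad event to handle. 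Second, on $\{\epsilon<\epsilon_0(\omega)\}$ the coefficient $f^{(\epsilon),\prime}(X(\cdot))$ coincides with $f'(X(\cdot))$, so $\mathcal{Y}^{(\epsilon)}(t)=Y(t)$ there and the a.s.\ convergence to the correct limit is immediate. Malliavin differentiability of $\mathcal{Y}^{(\epsilon)}(t)$ itself comes from \cite[Lemma~2.2.2]{Nualart2006} because $f^{(\epsilon),\prime}$ is globally bounded. After that, the paper's argument runs along lines similar to yours: write the integral equation for the derivative $\Xi^{(\epsilon)}$, split $f^{(\epsilon)}=a^{(\epsilon)}+b$, use the structure identities of \lref{lem_1516866857} together with the nonpositivity $\langle y,a^{(\epsilon),\prime}(x)y\rangle\leq 0$ to drop the dangerous quadratic term, apply Young's and Gronwall's inequalities, and land on a bound of the form $|\Xi^{(\epsilon)}(r,t)|^2\leq C\{t+\sum_{k>l}\int_0^t(X_k(s)-X_l(s))^{-6}\,ds\}e^{Ct}$, which is where the $-6q$ moment hypothesis is consumed. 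Switching your auxiliary process from $Y^{(\epsilon)}$ to $\mathcal{Y}^{(\epsilon)}$ closes the gap in one stroke; the rest of your outline (uniform $L^{2q}$ bound plus closability of $D$) then goes through.
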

\begin{proof}
	Let us consider a solution $\mathcal{Y}^{(\epsilon)}$ of
	\begin{align*}
		d\mathcal{Y}^{(\epsilon)}(t)
		=
			+
			f^{(\epsilon),\prime}(X(t))
			\mathcal{Y}^{(\epsilon)}(t)\,
			dt,
		\qquad
		\mathcal{Y}^{(\epsilon)}_0
		=
			\idMat.
	\end{align*}
	From \cite[Lemma~2.2.2]{Nualart2006} and the boundedness of $f^{(\epsilon),\prime}$,
	we have $\mathcal{Y}^{(\epsilon)}(t)\in\SobSp{1}{2q}(\RealNum^{d^2})$
	and see that $D\mathcal{Y}^{(\epsilon)}_{im}(t)$ is identified with the process $(\Xi^{(\epsilon)}_{imn}(\cdot,t))_{1\leq n\leq d}$ satisfying
	\begin{multline*}
		\Xi^{(\epsilon)}_{imn}(r,t)\\
		=
			\sum_{j,k=1}^d
			\int_r^t
				f^{(\epsilon),\prime\prime}_{ijk}(X(s))
				\tilde{Y}_{kn}(r,s)
				\mathcal{Y}^{(\epsilon)}_{jm}(s)\,
				ds
			+
			\sum_{j=1}^d
			\int_r^t
				f^{(\epsilon),\prime}_{ij}(X(s))
				\Xi^{(\epsilon)}_{jmn}(r,s)\,
				ds,
	\end{multline*}
	where $\tilde{Y}_{kn}(r,s)=\indicator{[0,s]}(r)(Y(s)Z(r)\sigma)_{kn}$.
	To show the assertion by a similar argument to that used for \pref{prop_1511158825},
	we give uniform estimates of $\expect[|\mathcal{Y}^{(\epsilon)}_{im}(t)|^{2q}]$
	and $\expect[\|D\mathcal{Y}^{(\epsilon)}_{im}(t)\|_\CM^{2q}]$ in $\epsilon$.

	It follows from a similar argument to that in \lref{lem_1516245889}
	that $|\mathcal{Y}^{(\epsilon)}(t)|\leq e^{Mt}\sqrt{d}$.
	In the rest of this proof, we give a uniform estimate of
	$\expect[\|D\mathcal{Y}^{(\epsilon)}_{im}(t)\|_\CM^{2q}]$ in $\epsilon$.
	To this end, we write
	$
		\Xi^{(\epsilon)}_{\bullet mn}(r,t)
		=
			(
				\Xi^{(\epsilon)}_{1mn}(r,t),
				\dots,
				\Xi^{(\epsilon)}_{dmn}(r,t)
			)^\top
	$
	and
	$
		|\Xi^{(\epsilon)}_{\bullet mn}(r,t)|^2
		=
			\sum_{i=1}^d
				\Xi^{(\epsilon)}_{imn}(r,t)^2
	$.
	Note
	\begin{align*}
		\|D\mathcal{Y}^{(\epsilon)}_{im}(t)\|_\CM^2
		\leq
			\sum_{l=1}^d
				\|D\mathcal{Y}^{(\epsilon)}_{lm}(t)\|_\CM^2
		=
			\int_0^t
				\sum_{n=1}^d
					|\Xi^{(\epsilon)}_{\bullet mn}(r,t)|^2\,
				dr.
	\end{align*}
	In the rest of this proof, we estimate $|\Xi^{(\epsilon)}_{\bullet mn}(r,t)|^2$ uniformly in $r$.
	The fundamental theorem of calculus and the fact that $\Xi^{(\epsilon)}_{imn}(r,r)^2=0$ yield
	\begin{multline*}
		|\Xi^{(\epsilon)}_{\bullet mn}(r,t)|^2\\
		=
			2
			\sum_{i=1}^d
				\int_r^t
					\Xi^{(\epsilon)}_{imn}(r,s)\,
					d\Xi^{(\epsilon)}_{imn}(r,s)
		=
			2
			\int_r^t
				g^{(\epsilon)}_{mn}(r,s)\,
				ds
			+
			2
			\int_r^t
				h^{(\epsilon)}_{mn}(r,s)\,
				ds,
	\end{multline*}
	where
	\begin{align}
		\label{eq_1515661143}
		g^{(\epsilon)}_{mn}(r,s)
		&=
			\sum_{i=1}^d
				\Xi^{(\epsilon)}_{imn}(r,s)
				\sum_{j,k=1}^d
					f^{(\epsilon),\prime\prime}_{ijk}(X(s))
					\tilde{Y}_{kn}(r,s)
					\mathcal{Y}^{(\epsilon)}_{jm}(s),\\
		h^{(\epsilon)}_{mn}(r,s)
		&=
			\sum_{i=1}^d
				\Xi^{(\epsilon)}_{imn}(r,s)
				\sum_{j=1}^d
					f^{(\epsilon),\prime}_{ij}(X(s))
					\Xi^{(\epsilon)}_{jmn}(r,s).
	\end{align}
	We define functions $g^{(\epsilon),a}_{mn}(r,\cdot)$ and $g^{(\epsilon),b}_{mn}(r,\cdot)$
	by replacing $f^{(\epsilon)}$ by $a^{(\epsilon)}$ and $b$, respectively, in \eqref{eq_1515661143}.
	We use similar symbols for $h^{(\epsilon)}_{mn}(r,\cdot)$.
	Because $f^{(\epsilon)}=a^{(\epsilon)}+b$,
	we have
	$
		g^{(\epsilon)}_{mn}(r,s)
		=
			g^{(\epsilon),a}_{mn}(r,s)
			+
			g^{(\epsilon),b}_{mn}(r,s)
	$
	and
	$
		h^{(\epsilon)}_{mn}(r,s)
		=
			h^{(\epsilon),a}_{mn}(r,s)
			+
			h^{(\epsilon),b}_{mn}(r,s)
	$.

	We first estimate $g^{(\epsilon),a}_{mn}(r,s)$.
	From \eqref{eq_1516866870} and \eqref{eq_7439104813}, we have
	\begin{align*}
		g^{(\epsilon),a}_{mn}(r,s)
		&=
			\sum_{i=1}^d
				\Xi^{(\epsilon)}_{imn}(r,s)
				\sum_{l;l\neq i}
					\phi^{(\epsilon),\prime\prime}_{il}(X_i(s)-X_l(s))\\
		&\phantom{=}\qquad\qquad
					\times
					\{\tilde{Y}_{in}(r,s)-\tilde{Y}_{ln}(r,s)\}
					\{\mathcal{Y}^{(\epsilon)}_{im}(s)-\mathcal{Y}^{(\epsilon)}_{lm}(s)\}\\
		&=
			\sum_{k,l;k>l}
				\{\Xi^{(\epsilon)}_{kmn}(r,s)-\Xi^{(\epsilon)}_{lmn}(r,s)\}
				\phi^{(\epsilon),\prime\prime}_{kl}(X_k(s)-X_l(s))\\
		&\phantom{=}\qquad\qquad
				\times
				\{\tilde{Y}_{kn}(r,s)-\tilde{Y}_{ln}(r,s)\}
				\{\mathcal{Y}^{(\epsilon)}_{km}(s)-\mathcal{Y}^{(\epsilon)}_{lm}(s)\}.
	\end{align*}
	By combining this expression,
	the inequality $\phi^{(\epsilon),\prime\prime}_{kl}(\xi)\leq 2\alpha_{kl}/\xi^3$ for $\xi>0$,
	and \lref{lem_1516245889},
	we have
	\begin{align*}
		|g^{(\epsilon),a}_{mn}(r,s)|
		\leq
			C_1
			\sum_{k,l;k>l}
				\frac{|\Xi^{(\epsilon)}_{kmn}(r,s)-\Xi^{(\epsilon)}_{lmn}(r,s)|}{|X^{(\epsilon)}_k(s)-X^{(\epsilon)}_l(s)|^3},
	\end{align*}
	where $C_1$ is a positive constant such that
	\begin{align*}
		|
			2\alpha_{kl}
			\{\tilde{Y}_{kn}(r,s)-\tilde{Y}_{ln}(r,s)\}
			\{\mathcal{Y}^{(\epsilon)}_{km}(s)-\mathcal{Y}^{(\epsilon)}_{lm}(s)\}
		|
		\leq
			C_1
	\end{align*}
	for any $k$, $l$, $m$, and $n$, and for $r<s$.
	Hence, Young's inequality implies
	\begin{align*}
		|g^{(\epsilon),a}_{mn}(r,s)|
		&\leq
			C_1
			\sum_{k,l;k>l}
				\frac{1}{2}
				\left\{
					(\Xi^{(\epsilon)}_{kmn}(r,s)-\Xi^{(\epsilon)}_{lmn}(r,s))^2
					+
					\frac{1}{(X_k(s)-X_l(s))^6}
				\right\}\\
		&\leq
			2(d-1)C_1
			|\Xi^{(\epsilon)}_{\bullet mn}(r,s)|^2
			+
			\frac{C_1}{2}
			\sum_{k,l;k>l}
					\frac{1}{(X_k(s)-X_l(s))^6}.
	\end{align*}

	Next, we estimate $g^{(\epsilon),b}_{mn}(r,s)$.
	Noting the boundedness of $b''$ and \lref{lem_1516245889},
	we see that there exists a positive constant $C_2$ that satisfies
	\begin{align*}
		\sum_{i=1}^d
			\Bigg(
				\sum_{j,k=1}^d
					b''_{ijk}(X(s))
					\tilde{Y}_{kn}(r,s)
					\mathcal{Y}^{(\epsilon)}_{jm}(s)
			\Bigg)^2
		\leq
			C_2^2
	\end{align*}
	for any $r\leq s$.
	H\"older's inequality and Young's inequality imply
	\begin{align*}
		|g^{(\epsilon),b}_{mn}(r,s)|
		\leq
			|\Xi^{(\epsilon)}_{\bullet mn}(r,s)|
			C_2
		\leq
			\frac{C_2}{2}
			\{
				1
				+
				|\Xi^{(\epsilon)}_{\bullet mn}(r,s)|^2
			\}.
	\end{align*}

	Finally, we estimate $h^{(\epsilon),a}_{mn}(r,s)$
	and $h^{(\epsilon),b}_{mn}(r,t)$.
	Note that
	\begin{align*}
		h^{(\epsilon),a}_{mn}(r,s)
		&=
			\langle
				\Xi^{(\epsilon)}_{\bullet mn}(r,s),
				a^{(\epsilon),\prime}(X(s))\Xi^{(\epsilon)}_{\bullet mn}(r,s)
			\rangle,\\
		h^{(\epsilon),b}_{mn}(r,s)
		&=
			\langle
				\Xi^{(\epsilon)}_{\bullet mn}(r,s),
				b'(X(s))\Xi^{(\epsilon)}_{\bullet mn}(r,s)
			\rangle.
	\end{align*}
	Hence, by using \eqref{eq_1516866865} and noting the boundedness of $b'$, we have
	\begin{align*}
		h^{(\epsilon),a}_{mn}(r,t)
		&\leq
			0,
		&
		|h^{(\epsilon),b}_{mn}(r,t)|
		&\leq
			M
			|\Xi^{(\epsilon)}_{\bullet mn}(r,s)|^2.
	\end{align*}

	Combining the above, we obtain
	\begin{align*}
		|\Xi^{(\epsilon)}_{\bullet mn}(r,t)|^2
		&\leq
			C_3
			(t-r)
			+
			C_4
			\sum_{k,l;k>l}
				\int_r^t
					\frac{ds}{(X_k(s)-X_l(s))^6}
			+
			C_5
			\int_r^t
				|\Xi^{(\epsilon)}_{\bullet mn}(r,s)|^2\,
				ds.
	\end{align*}
	Therefore, Gronwall's inequality implies
	\begin{align*}
		|\Xi^{(\epsilon)}_{\bullet mn}(r,t)|^2
		&\leq
			\left\{
				C_3
				(t-r)
				+
				C_4
				\sum_{k,l;k>l}
					\int_r^t
						\frac{ds}{(X_k(s)-X_l(s))^6}
			\right\}
			e^{C_5(t-r)}\\
		&\leq
			\left\{
				C_3
				t
				+
				C_4
				\sum_{k,l;k>l}
					\int_0^t
						\frac{ds}{(X_k(s)-X_l(s))^6}
			\right\}
			e^{C_5t}.
	\end{align*}
	This and \aref{ass_1515645743}~\iref{item_1515740401}
	imply that $\expect[\|D\mathcal{Y}^{(\epsilon)}_{im}\|_\CM^q]$ is finite.
\end{proof}

\begin{lemma}\label{lem_1511947871}
	Under \aref{ass_1515645743}, $\gamma^{-1}\in\SobSp{1}{2q}(\RealNum^{d^2})$.
\end{lemma}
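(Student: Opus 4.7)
The plan is to express $\gamma^{-1}$ as a smooth function of $\gamma$ with globally bounded partial derivatives, and then invoke the Malliavin chain rule.

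First, I would observe that $\gamma$ takes values almost surely in a fixed compact subset of invertible matrices. Indeed, \lref{lem_1516245889} applied with $s=0$ gives $|Y(t)|\leq e^{Mt}\sqrt{d}$ a.s., so the affine relation $\gamma=\idMat-e^{-(M+1)t}Y(t)$ yields a deterministic constant $R>0$ with $|\gamma|\leq R$ a.s. Combined with the lower bound $|\det\gamma|\geq(1-e^{-t})^d=:c>0$ from \lref{lem_1511946573}, this places $\gamma$ almost surely in the compact set
\begin{align*}
	K=\{A\in\Mat{d}{\RealNum}:|A|\leq R,~|\det A|\geq c\},
\end{align*}
all of whose elements are invertible. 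Moreover, \lref{lem_1511426245} together with the affine relation above gives $\gamma\in\SobSp{1}{2q}(\RealNum^{d^2})$.

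Next, the matrix inversion $A\mapsto A^{-1}$ is $C^\infty$ on an open neighbourhood $K'\supset K$ whose closure is compact and contained in the set of invertible matrices. I would fix a smooth cut-off $\eta\colon\Mat{d}{\RealNum}\to[0,1]$ with $\eta\equiv 1$ on $K$ and $\mathrm{supp}(\eta)\subset K'$, and define $\Psi(A)=\eta(A)A^{-1}$ on $K'$ and $\Psi(A)=0$ outside $\mathrm{supp}(\eta)$. The two definitions agree on the overlap where $\eta=0$, so $\Psi\colon\Mat{d}{\RealNum}\to\Mat{d}{\RealNum}$ is smooth with bounded partial derivatives of all orders, and $\Psi(\gamma)=\gamma^{-1}$ a.s. Applying the Malliavin chain rule entry by entry (cf.\ \cite[Proposition~1.2.3]{Nualart2006}) then yields $\gamma^{-1}=\Psi(\gamma)\in\SobSp{1}{2q}(\RealNum^{d^2})$, with the expected formula $D\gamma^{-1}=-\gamma^{-1}(D\gamma)\gamma^{-1}$ a.s.

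The only delicate point is the global control of the inversion map: $A\mapsto A^{-1}$ does not itself have bounded derivatives, so the a priori bounds on $|\gamma|$ above and on $|\det\gamma|^{-1}$ below provided by \lref[lem_1516245889]{lem_1511946573} are precisely what makes the cut-off construction possible. Beyond this, the argument is mechanical.
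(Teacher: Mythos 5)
Your proof is correct and relies on exactly the same inputs as the paper's — boundedness of $\gamma$ (via \lref{lem_1516245889}), the lower bound on $|\det\gamma|$ (\lref{lem_1511946573}), membership of $\gamma$ in $\SobSp{1}{2q}$ (via \lref{lem_1511426245}), and the chain rule of \cite[Proposition~1.2.3]{Nualart2006}. The organization differs slightly: the paper first writes $\gamma^{-1}=\Gamma^\top/\det\gamma$ via the cofactor matrix, so that $\Gamma$ and $\det\gamma$ are polynomials in the bounded entries of $\gamma$ (hence land in $\SobSp{1}{2q}$ by the product rule), and then the cut-off argument is needed only at the scalar level, to make $x\mapsto 1/x$ globally Lipschitz on the range of $\det\gamma$. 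You instead bypass the cofactor decomposition entirely and cut off the matrix-valued inversion map $A\mapsto A^{-1}$ directly on a compact neighbourhood of the a.s.\ range of $\gamma$. Both routes are sound; your version is a more unified one-step application of the chain rule, while the paper's two-step decomposition makes the bounded-derivative hypothesis of the chain rule visibly satisfied (polynomials on a bounded set, plus a one-variable cut-off) without having to discuss a smooth truncation of the matrix inverse. Nothing is missing in your argument.
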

\begin{proof}
	We use \cite[Proposition~1.2.3]{Nualart2006} to prove this assertion.
	Recall that $\gamma^{-1}=\Gamma^\top/\det\gamma$,
	where $\Gamma$ is the cofactor matrix of $\gamma$.
	Because all elements of $\gamma$ are bounded and belong to $\SobSp{1}{2q}$
	(see \lref[lem_1516245889]{lem_1511426245}),
	we have $\Gamma\in\SobSp{1}{2q}(\RealNum^{d^2})$ and $\det\gamma\in\SobSp{1}{2q}$.
	\lref{lem_1511946573} yields $1/\det\gamma\in\SobSp{1}{2q}$.
	Hence, the assertion holds.
\end{proof}

\begin{lemma}\label{lem_1511497774}
	Under \aref{ass_1515645743}, $u_k\in\SobSp{1}{2q}(\CM)$ for every $1\leq k\leq d$.
\end{lemma}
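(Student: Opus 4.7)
The goal is to show $u_k \in \SobSp{1}{2q}(\CM)$ for every $1 \leq k \leq d$, namely (i) $\expect \|u_k\|_\CM^{2q} < \infty$ and (ii) Malliavin differentiability with $\expect\|Du_k\|_{\CM \otimes \CM}^{2q} < \infty$. The plan is to prove (i) by a direct estimate and to handle (ii) through approximation by smooth analogues built from $f^{(\epsilon)}$ and $X^{(\epsilon)}$. The key structural input is
\[
	\dot u_{nk}(s)
	=
		\indicator{[0,t]}(s)\,
		e^{-(M+1)(t-s)}\,
		(\sigma^{-1}\{(M+1)\idMat - f'(X(s))\})_{nk},
\]
which, because $b'$ is bounded by $M$ (see \eqref{eq_1539590587}), reduces everything to estimates on the singular part $a'(X(s))$ and its Malliavin derivative $a''(X(s))\cdot DX(s)$.

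For (i), one uses $|a'(x)|^2 \leq C \sum_{i\neq j}(x_i - x_j)^{-4}$ from \lref{lem_1515144318}~\iref{item_1515480796}. Jensen's inequality in the time integral defining $\|u_k\|_\CM^2$, followed by taking expectation, yields
\[
	\expect \|u_k\|_\CM^{2q}
	\leq
		C\Big(
			1
			+
			\sum_{i\neq j}
			\int_0^t
				\expect[|X_i(s) - X_j(s)|^{-4q}]\,
			ds
		\Big),
\]
which is finite by \aref{ass_1515645743}~\iref{item_1515740401} together with H\"older's inequality (since $4q < 6q$).

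For (ii), I introduce the approximation
\[
	u_{nk}^{(\epsilon)}(r)
	=
		\int_0^r
			\indicator{[0,t]}(s)
			(\sigma^{-1}\{(M+1)\idMat - f^{(\epsilon),\prime}(X^{(\epsilon)}(s))\})_{nk}\,
			e^{-(M+1)(t-s)}\,
			ds.
\]
For each fixed $0<\epsilon<1$, since $f^{(\epsilon)}$ is globally smooth with bounded derivatives, standard Malliavin calculus combined with \lref{lem_1512025560} gives $u_k^{(\epsilon)} \in \SobSp{1}{2q}(\CM)$, with $Du_k^{(\epsilon)}$ obtained by the chain rule and involving the factor $f^{(\epsilon),\prime\prime}(X^{(\epsilon)}(s))\cdot DX^{(\epsilon)}(s)$. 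The bound $|\phi^{(\epsilon),\prime\prime}_{ij}(\xi)| \leq 2\alpha_{ij}|\xi|^{-3}$ for $|\xi|\geq \epsilon$ together with \lref{lem_1516867028}~\iref{item_1516866818} yields $|a^{(\epsilon),\prime\prime}(x)|^2 \leq C\sum_{i\neq j}(x_i - x_j)^{-6}$ on the relevant range; combined with \lref{lem_1515739775} (which allows replacing $X^{(\epsilon)}$ by $X$ for $\epsilon < \epsilon_0(T)$), the uniform bound $\sup_\epsilon \|DX^{(\epsilon)}(s)\|_\CM \leq C$ from \lref{lem_1512025560}, and the boundedness of $b''$, this leads to
\[
	\sup_{0<\epsilon<1}
		\expect \|Du_k^{(\epsilon)}\|_{\CM \otimes \CM}^{2q}
	\leq
		C\Big(
			1
			+
			\sum_{i\neq j}
			\int_0^t
				\expect[(X_i(s) - X_j(s))^{-6q}]\,
			ds
		\Big),
\]
which is finite precisely by \aref{ass_1515645743}~\iref{item_1515740401}.

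Finally, \lref{lem_1515739775} provides $u_k^{(\epsilon)} \to u_k$ almost surely, and this upgrades to $L^{2q}(\CM)$-convergence via the uniform $L^{2q}$-bound on $\{u_k^{(\epsilon)}\}$ established as in (i). Combined with the uniform boundedness of $\{Du_k^{(\epsilon)}\}$ in $L^{2q}(\CM \otimes \CM)$, the closability of $D$ (\cite[Proposition~1.2.3]{Nualart2006}, already invoked in \lref{lem_1511947871}) then yields $u_k \in \SobSp{1}{2q}(\CM)$. The principal difficulty is the cubic singularity of $\phi_{ij}''$ near the boundary of $\Delta_d$, which is precisely what forces the sixth-order inverse moment condition of \aref{ass_1515645743}~\iref{item_1515740401}; this explains the sharp exponent $6q$ in the assumption.
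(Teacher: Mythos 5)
Your proof takes a genuinely different route from the paper. The paper does not approximate $u_k$ at all: it invokes a criterion from Shigekawa's book (cited as pp.~125--126) which reduces $u_{nk}=\int_0^\cdot A_{nk}(s)\,ds\in\SobSp{1}{2q}(\CM)$ to three conditions on the integrand $\Psi(s)=A_{nk}(s)$, namely $\Psi(s)\in\SobSp{1}{2q}(\RealNum)$, $\expect[(\int_0^T|\Psi|^2)^q]<\infty$, and $\expect[(\int_0^T\|D\Psi(s)\|_\CM^2\,ds)^q]<\infty$. It then verifies these for $\Psi(s)=(X_i(s)-X_j(s))^{-2}$ directly, computing $D\Psi(s)$ by the chain rule on the already-established $X(s)\in\SobSp{1}{2q}$ from \pref{prop_1511158825}, and lifts to $a'_{ij}(X(s))$ and $b'_{ij}(X(s))$. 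Your route uses an approximation $u_k^{(\epsilon)}$, uniform $L^{2q}$ bounds, and closability of $D$. Both strategies are in principle viable, but yours has a concrete gap.

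The gap is in the claimed uniform bound
\begin{align*}
	\sup_{0<\epsilon<1}
		\expect\|Du_k^{(\epsilon)}\|_{\CM\otimes\CM}^{2q}
	\leq
		C\Big(1+\sum_{i\neq j}\int_0^t\expect[(X_i(s)-X_j(s))^{-6q}]\,ds\Big).
\end{align*}
You define $u^{(\epsilon)}_k$ using $f^{(\epsilon),\prime}(X^{(\epsilon)}(s))$, so $Du^{(\epsilon)}_k$ contains $\phi^{(\epsilon),\prime\prime}_{ij}(X^{(\epsilon)}_i(s)-X^{(\epsilon)}_j(s))$. The pointwise bound $|\phi^{(\epsilon),\prime\prime}_{ij}(\xi)|\leq 2\alpha_{ij}/\xi^3$ is only valid for $\xi>0$ (for $\xi\leq 0$ the second derivative vanishes, but for small positive $\xi<\epsilon$ the relevant bound is $2\alpha_{ij}/\rho_\epsilon(\xi)^3$, which is bounded by $2\alpha_{ij}/\xi^3$ only because $\rho_\epsilon(\xi)\geq\xi$ there). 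The problem is that $X^{(\epsilon)}$ is not guaranteed to stay in $\Delta_d$ — the approximating SDE has a globally Lipschitz drift and the particles can cross — and \aref{ass_1515645743} controls inverse moments of $X$, not of $X^{(\epsilon)}$. Invoking \lref{lem_1515739775} to replace $X^{(\epsilon)}$ by $X$ only works on the random event $\{\epsilon<\epsilon_0(T)\}$ and cannot produce a deterministic supremum over $0<\epsilon<1$. The clean fix, which the paper itself uses when handling $Y(t)$ in \lref{lem_1511426245}, is to put $X$ (not $X^{(\epsilon)}$) as the argument: define $u^{(\epsilon)}_k$ via $f^{(\epsilon),\prime}(X(s))$. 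Then $X_i(s)-X_j(s)>0$ a.s.\ and $\phi^{(\epsilon),\prime\prime}_{ij}(X_i(s)-X_j(s))\leq 2\alpha_{ij}/(X_i(s)-X_j(s))^3$ uniformly in $\epsilon$, making the $6q$-th inverse moment of $X$ the right quantity. With that change your approximation argument goes through; as stated, the uniformity step is unjustified.
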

\begin{proof}
	Let $A_{n k}(s)$ be the integrand in \eqref{eq_1515996735};
	that is, $u_{n k}=\int_0^\cdot A_{n k}(s)\,ds$.
	By \cite[pp.125--126]{Shigekawa2004}, the assertion holds
	if we have
	\begin{enumerate}
		\item	\label{item_1516083876}
				$
					\Psi(s)
					\in
						\SobSp{1}{2q}(\RealNum)
				$
				for all $0\leq s\leq T$,
		\item	\label{item_1516084702}
				$
					\expect
						\left[
							\left(
								\int_0^T
									|\Psi(s)|^2\,
									ds
							\right)^q
						\right]
					<
						\infty
				$,
		\item	\label{item_1516095030}
				$
					\expect
						\left[
							\left(
								\int_0^T
									\|D\Psi(s)\|_\CM^2\,
									ds
							\right)^q
						\right]
					<
						\infty
				$
	\end{enumerate}
	for $\Psi(s)=A_{n k}(s)$.

	First, we set $\Psi(s)=(X_i(s)-X_j(s))^{-2}$ for $i\neq j$
	and show that Assertions~\iref{item_1516083876},~\iref{item_1516084702}, and~\iref{item_1516095030} hold.
	\aref{ass_1515645743}~\iref{item_1515740401} and \pref{prop_1511158825}	yield Assertion~\iref{item_1516083876}.
	Jensen's inequality and \aref{ass_1515645743}~\iref{item_1515740401} imply Assertion~\iref{item_1516084702}.
	Because $D\Psi(s)$ is identified with
	\begin{align*}
		(-2)
		(X_i(s)-X_j(s))^{-3}
		(\tilde{Y}_{in}(\cdot,s)-\tilde{Y}_{jn}(\cdot,s))_{1\leq n\leq d},
	\end{align*}
	where
	$
		\tilde{Y}_{in}(u,s)
		=
			\indicator{[0,s]}(u)
			(
				Y(s)
				Z(u)
				\sigma
			)_{in}
	$,
	we see that $\|D\Psi(s)\|_\CM^2$ is bounded above by
	\begin{multline*}
		\int_0^T
			|
				(-2)
				(X_i(s)-X_j(s))^{-3}
				(\tilde{Y}_{in}(u,s)-\tilde{Y}_{jn}(u,s))_{1\leq n\leq d}
			|^2\,
			du\\
		\begin{aligned}
			&=
				\int_0^T
					4
					(X_i(s)-X_j(s))^{-6}
					|(\tilde{Y}_{in}(u,s)-\tilde{Y}_{jn}(u,s))_{1\leq n\leq d}|^2\,
					du\\
			&\leq
				16d^2|\sigma|^2e^{2MT}(X_i(s)-X_j(s))^{-6}.
		\end{aligned}
	\end{multline*}
	This estimate and \aref{ass_1515645743}~\iref{item_1515740401} ensure Assertion~\iref{item_1516095030}.

	We conclude this proof by showing
	Assertions~\iref{item_1516083876}, \iref{item_1516084702}, and \iref{item_1516095030}
	for $\Psi(s)=A_{n k}(s)$.
	From the above discussion for $\Psi(s)=(X_i(s)-X_j(s))^{-2}$,
	the assertions are valid for $\Psi(s)=a'_{ij}(X(s))$.
	Because $X(s)\in\SobSp{1}{2q}(\CM)$ and $b'_{ij}$ has bounded derivatives,
	the assertions hold for $\Psi(s)=b'_{ij}(X(s))$.
	Because $f=a+b$, Assertions~\iref{item_1516083876}, \iref{item_1516084702}, and \iref{item_1516095030} hold
	for $\Psi(s)=A_{n k}(s)$.
	The proof is complete.
\end{proof}
We are now in a position to prove our main theorem.
\begin{proof}[Proof of \tref{thm_1512205183}]
	We have $X(t)\in\SobSp{1}{p}$ for $1<p<\infty$ with $1/p+1/q\leq 1$ from \pref{prop_1511158825}.
	From \lref[lem_1511947871]{lem_1511497774},
	we have $\gamma^{-1}\in\SobSp{1}{2q}(\RealNum^{d^2})$ and $u\in\SobSp{1}{2q}(\CM^d)$,
	which implies $U\in\SobSp{1}{q}(\CM^d)$.
	In the rest of this proof, we show $\langle DX_i, U_j \rangle_\CM=\delta_{ij}$.
	\pref{prop_1511158825} and the definition \eqref{eq_1515996735} imply
	\begin{gather*}
		\frac{d(DX_i(t))_n}{ds}(s)
		=
			\indicator{[0,t]}(s)
			\sum_{k,l=1}^d
				Y_{ik}(t)
				Z_{kl}(s)
				\sigma_{ln},\\
		\frac{du_{n j}}{ds}(s)
		=
			\indicator{[0,t]}(s)
			\sum_{l=1}^d
				(\sigma^{-1})_{n l}
				((M+1)\delta_{lj}-f'_{lj}(X(s)))
				e^{-(M+1)(t-s)},
	\end{gather*}
	respectively.
	By summing the product of these terms over $n$,
	we obtain
	\begin{multline*}
		\langle DX_i(t), u_j \rangle_\CM\\
		\begin{aligned}
			&=
				\int_0^t
					\sum_{k,l=1}^d
						Y_{ik}(t)Z_{kl}(s)
						((M+1)\delta_{lj}-f'_{lj}(X(s)))e^{-(M+1)(t-s)}\,
					ds\\
			&=
				\sum_{k=1}^d
					Y_{ik}(t)
					e^{-(M+1)t}
					\int_0^t
						\sum_{l=1}^d
							Z_{kl}(s)
							((M+1)\delta_{lj}-f'_{lj}(X(s)))e^{(M+1)s}\,
						ds\\
			&=
				\sum_{k=1}^d
					Y_{ik}(t)
					e^{-(M+1)t}
					\int_0^t
						\left\{
							Z_{kj}(s)
							\frac{de^{(M+1)s}}{ds}
							+
							\frac{dZ_{kj}}{ds}(s)
							e^{(M+1)s}
						\right\}\,
						ds.
		\end{aligned}
	\end{multline*}
	The integration by parts formula implies
	\begin{multline*}
		\langle DX_i(t), u_j \rangle_\CM\\
		\begin{aligned}
			=
				\sum_{k=1}^d
					Y_{ik}(t)
					e^{-(M+1)t}
					\{Z_{kj}(t)e^{(M+1)t}-\delta_{kj}\}
			=
				\delta_{ij}-e^{-(M+1)t}Y_{ij}(t)
			=
				\gamma_{ij}.
		\end{aligned}
	\end{multline*}
	From this, we have
	\begin{align*}
		\langle DX_i(t), U_j \rangle_\CM
		=
			\sum_{k=1}^d
				\langle DX_i(t), u_k \rangle_\CM
				(\gamma^{-1})_{kj}
		=
			\sum_{k=1}^d
				\gamma_{ik}
				(\gamma^{-1})_{kj}
		=
			\delta_{ij}.
	\end{align*}
	The proof is complete.
\end{proof}

\section{Proof of \tref{thm_1516938726}}\label{Sec_Example}
In this section, we show that \aref{ass_1516937057} implies \aref{ass_1515645743}.
As a result we obtain \tref{thm_1516938726}.
We denote by $X^{\alpha,\mu+c}$ a solution to \eqref{eq_NonColPartSys}
with $\alpha_{ik}=\alpha$, $b=\mu+c$, and $\sigma=\idMat$
and call it a Dyson Brownian motion with a parameter $\alpha$ and a smooth drift $\mu+c$.
The goal of this section is to show the following propositions.

\begin{proposition}\label{prop_1516938218}
	Let \aref{ass_1516937057}~\iref{item_1516957962} and $\alpha\geq 1/2$ be satisfied.
	Then, there exists a unique strong solution $X^{\alpha,\mu+c}$ to \eqref{eq_NonColPartSys}
	such that $\prob(X^{\alpha,\mu+c}(t)\in\Delta_d \text{\ for all\ } 0\leq t\leq T)=1$.
\end{proposition}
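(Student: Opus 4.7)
The plan is to combine the existence of a non-colliding strong solution in the case $c = 0$ (guaranteed by the results of Rogers--Shi, Cepa--Lepingle, and Graczyk--Malecki cited just before the proposition) with a Girsanov change of measure that inserts the smooth bounded perturbation $c$. Concretely, I would let $X^{\alpha,\mu}$ denote the unique strong non-colliding solution of \eqref{eq_NonColPartSys} with $c = 0$, defined on $(\probSp, \sigmaField, \prob)$ and driven by the given Brownian motion $W$. Since each $c_i$ is bounded, Novikov's condition is trivial, so
\[
	N(T)
	=
		\exp\biggl(
			\sum_{i=1}^d
				\int_0^T
					c_i(X^{\alpha,\mu}(s))\,
					dW_i(s)
			-
			\frac{1}{2}
			\int_0^T
				|c(X^{\alpha,\mu}(s))|^2\,
				ds
		\biggr)
\]
is a uniformly integrable $\prob$-martingale with $\expect[N(T)] = 1$. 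Defining $\boldsymbol{Q}$ on $\sigmaField(T)$ by $d\boldsymbol{Q}/d\prob = N(T)$, Girsanov's theorem says $\tilde{W}_i(t) = W_i(t) - \int_0^t c_i(X^{\alpha,\mu}(s))\, ds$ is a standard $d$-dimensional Brownian motion under $\boldsymbol{Q}$, and $X^{\alpha,\mu}$ now solves the SDE with full drift $a + \mu + c$ driven by $\tilde{W}$; hence $(X^{\alpha,\mu}, \tilde{W})$ is a weak solution of \eqref{eq_NonColPartSys}. Because $N(T) > 0$ almost surely, $\boldsymbol{Q} \sim \prob$, and the non-collision property is preserved: $\boldsymbol{Q}(X^{\alpha,\mu}(t) \in \Delta_d \text{ for all } 0 \le t \le T) = 1$.

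For the pathwise uniqueness, I would take two strong solutions $X$ and $Y$ on a common filtered space driven by the same Brownian motion, starting from $\bar{x} \in \Delta_d$ and both staying in $\Delta_d$. Since $X - Y$ is of bounded variation, standard calculus gives
\[
	|X(t) - Y(t)|^2
	=
		2\int_0^t
			\langle
				X(s) - Y(s),\,
				f(X(s)) - f(Y(s))
			\rangle\,
			ds.
\]
Using the convexity of $\Delta_d$ and applying \eqref{eq_1515216560} pointwise along the line segment joining $Y(s)$ and $X(s)$, the singular contribution satisfies $\langle X - Y,\, a(X) - a(Y)\rangle \le 0$. Since the derivatives of $b = \mu + c$ are bounded by some constant $M$, one gets $\langle X - Y,\, b(X) - b(Y)\rangle \le M |X - Y|^2$, and Gronwall's lemma forces $X \equiv Y$. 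Combining this pathwise uniqueness with the weak existence established above, the Yamada--Watanabe principle produces the unique strong solution $X^{\alpha,\mu+c}$ of \eqref{eq_NonColPartSys} on the original probability space $(\probSp, \sigmaField, \prob)$ with the original Brownian motion $W$, staying in $\Delta_d$ almost surely.

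The hardest part will be establishing pathwise uniqueness in the presence of a drift that blows up on the boundary of the state space. The crucial structural input is the monotonicity of $-a$ on $\Delta_d$ recorded in \eqref{eq_1515216560} of \lref{lem_1515650878}, which forces the singular contribution to be nonpositive in the Itô bound on $|X-Y|^2$; without this monotonicity, controlling two colliding-free solutions near the singular boundary would require a considerably more delicate Krylov--R\"ockner type argument. The remaining ingredients---the trivial Novikov condition for bounded $c$ and the Yamada--Watanabe principle---are standard. The remark in the paper suggests one can dispense with the full $\alpha \ge 1/2$ input and use only the critical $\alpha = 1/2$ case, presumably by iterating a Vandermonde-type Girsanov transformation (absorbing the extra factor $(\alpha-1/2)\sum_{k\neq i}(x_i-x_k)^{-1}$ via the logarithmic derivative of the Vandermonde product) together with the inverse moments supplied by \pref{prop_1516937693}.
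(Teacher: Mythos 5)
Your proposal is correct and follows essentially the same route as the paper: Girsanov from the $c=0$ case to get weak existence, a dissipativity plus Gronwall argument for pathwise uniqueness, and Yamada--Watanabe to promote to a strong solution. The paper's version of the pathwise-uniqueness lemma (its Lemma~4.2) bounds the singular contribution by the direct algebraic identity $(\xi-\eta)(\xi^{-1}-\eta^{-1})\le 0$ applied to the differences $x_k-x_l$ and $\tilde{x}_k-\tilde{x}_l$ via the identity \eqref{eq_7439104813}, rather than your mean-value/convexity argument through \eqref{eq_1515216560}, but these are interchangeable since $\Delta_d$ is convex and $a$ is smooth there; likewise, the paper routes the $c=0$, $\alpha\ge 1/2$ existence through a Vandermonde Girsanov from the $\alpha=1/2$ base (its Lemma~4.4), whereas you invoke the cited Rogers--Shi/C\'epa--L\'epingle/Graczyk--Ma\l{}ecki results directly, exactly as your closing remark anticipates.
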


\begin{proposition}\label{prop_1516937693}
	Let \aref{ass_1516937057}~\iref{item_1516957962} and $\alpha > 1/2$ be satisfied.
	For any $0 \leq q<\alpha-1/2$, we have
	\begin{align}\label{eq_1517028447}
		\max_{\substack{1\leq i,k\leq d,\\i \neq k}}
		\sup_{0 \leq t \leq T}
			\expect[|X_i^{\alpha,\mu+c}(t)-X_k^{\alpha,\mu+c}(t)|^{-q}]
		<
			\infty.
	\end{align}
\end{proposition}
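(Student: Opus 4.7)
My plan is to prove the estimate in three stages: first remove the bounded smooth drift $c$ by a Girsanov transformation, then establish a supermartingale estimate for a negative power of the Vandermonde $V(x)=\prod_{i<j}(x_j-x_i)$ under the transformed measure, and finally extract the single-gap moment bound by Cauchy--Schwarz and H\"older.

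Since $c$ is bounded, Novikov's condition is automatic for $\mathcal{E}_T^c=\exp\bigl(-\sum_i \int_0^T c_i(X(s))\,dW_i(s)-\tfrac12\int_0^T |c(X(s))|^2\,ds\bigr)$, and $\tilde{\prob}=\mathcal{E}_T^c\cdot\prob$ is a probability measure under which $\tilde W_i(t)=W_i(t)+\int_0^t c_i(X(s))\,ds$ is Brownian and $X$ solves the reduced SDE $dX_i=(a_i(X)+\mu_i(X))\,dt+d\tilde W_i$. The key step is to show that $V^{-\theta}(X(\cdot))$ is a $\tilde{\prob}$-supermartingale for every $0\le\theta\le 2\alpha-1$. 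Starting from $a=\alpha\nabla \log V$ and the Dyson identity $\Delta \log V=-|\nabla \log V|^2=-2\sum_{i<j}(x_j-x_i)^{-2}$ (which follows from the partial-fraction cancellation $\sum_{\mathrm{cyc}}[(x_p-x_\ell)(x_p-x_{\ell'})]^{-1}=0$ applied to the off-diagonal part of $|\nabla \log V|^2$), It\^o's formula under $\tilde{\prob}$ gives
\begin{align*}
dV^{-\theta}(X(t))
&= V^{-\theta}(X(t))\Big[\tfrac{\theta(\theta+1-2\alpha)}{2}|\nabla \log V(X(t))|^2\\
&\qquad -\theta\sum_{i<j}\tfrac{\mu_j(X(t))-\mu_i(X(t))}{X_j(t)-X_i(t)}\Big]dt + dM(t),
\end{align*}
with $M$ a local martingale. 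For $\theta\le 2\alpha-1$ the first bracketed term is non-positive, and the second is non-positive on $\Delta_d$ because $\mu_j(X)=\tilde{\mu}_j(X_j)\ge\tilde{\mu}_i(X_i)=\mu_i(X)$ when $j>i$ by \aref{ass_1516937057}~\iref{item_c}. Localizing at $\tau_n=\inf\{t:V(X(t))\le 1/n\}$ and using \pref{prop_1516938218} to conclude $\tau_n\uparrow T$ a.s., we obtain a non-negative local supermartingale, hence a genuine supermartingale, so $\expect^{\tilde{\prob}}[V^{-\theta}(X(t))]\le V^{-\theta}(\bar{x})<\infty$ uniformly in $t\in[0,T]$.

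For $q<\alpha-1/2$, choose $p>1$ with $2pq<2\alpha-1$, and apply Cauchy--Schwarz to the factorization $|X_i-X_k|^{-pq}=V^{-pq}\prod_{(\ell,m)\neq(i,k),\,\ell<m}(X_m-X_\ell)^{pq}$:
\begin{align*}
\expect^{\tilde{\prob}}[|X_i(t)-X_k(t)|^{-pq}]
\le \sqrt{\expect^{\tilde{\prob}}[V^{-2pq}(X(t))]\cdot\expect^{\tilde{\prob}}\Big[\prod_{(\ell,m)\neq(i,k)}(X_m-X_\ell)^{2pq}\Big]}.
\end{align*}
The first factor is finite by the previous paragraph; the second is bounded by polynomial moments of $\sup_{0\le t\le T}|X(t)|$ under $\tilde{\prob}$, which hold uniformly by \lref{lem_1516244752} applied to the reduced SDE (whose drift $a+\mu$ still satisfies the same dissipation-plus-linear-growth bound). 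A final H\"older step
\begin{align*}
\expect^\prob[|X_i(t)-X_k(t)|^{-q}]
\le \expect^{\tilde{\prob}}[|X_i(t)-X_k(t)|^{-pq}]^{1/p}\cdot\expect^{\tilde{\prob}}[(\mathcal{E}_T^c)^{-p'}]^{1/p'}
\end{align*}
with $1/p+1/p'=1$ completes the proof, the last moment being finite because $c$ is bounded.

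The main obstacle is the clean It\^o-level computation above: it hinges on the Dyson identity $|\nabla \log V|^2+\Delta \log V=0$, which must be verified via the triple cancellation, and on the non-colliding property supplied by \pref{prop_1516938218} to make the localization go through. Once this supermartingale statement is in hand, the remaining steps are routine applications of Cauchy--Schwarz, H\"older, and the moment estimate of \lref{lem_1516244752}.
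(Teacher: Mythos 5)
Your argument is correct and it takes a genuinely different route from the paper, so a comparison is in order. The paper proves the inverse-moment bound by first performing a Girsanov change that maps the $\alpha$-process to the $\alpha=1/2$-process (Lemma~\ref{lem_1517027501}); the Radon--Nikodym density $Z(T)$ is shown to be dominated by $h(X^{1/2,\mu}(T))^{\alpha-1/2}/h(\bar{x})^{\alpha-1/2}$ because the $\mu$-term and the quadratic-variation term in the exponent are both nonpositive (the former by the ordering assumption on $\mu$, the latter trivially). Plugging $g(w)=|w_i(t)-w_k(t)|^{-q}$ into the change-of-measure formula then makes the positive Vandermonde power \emph{cancel} the negative gap power, leaving a polynomial whose moments are handled by Lemma~\ref{lem_1516244752}; a second Girsanov step reinstates $c$ via H\"older. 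You instead remove $c$ first, then prove a supermartingale inequality $\expect^{\tilde{\prob}}[V^{-\theta}(X(t))]\le V^{-\theta}(\bar{x})$ for $0\le\theta\le 2\alpha-1$ directly via It\^o, using the same two structural facts that drive the paper's computation (the Dyson identity $|\nabla\log V|^2=-\Delta\log V$, stated as~\eqref{eq_u12}, and the ordering $\mu_k\ge\mu_l$), and then extract the single-gap moment by Cauchy--Schwarz. Your route is more self-contained in that it never invokes the auxiliary $\alpha=1/2$ process and delivers a time-uniform bound on inverse moments of the full Vandermonde, but it is essentially the "exponentiated" version of what the paper does at the level of $Z$; the restriction $2pq<2\alpha-1$ forced by Cauchy--Schwarz costs nothing here because the proposition already has strict inequality $q<\alpha-1/2$.

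One technical point worth tightening: the stopping times $\tau_n=\inf\{t:V(X(t))\le 1/n\}$ alone do not bound the local-martingale integrand $V^{-\theta}\nabla\log V(X)$, because $\nabla\log V$ can be large on $\{V\ge 1/n\}$ when one gap is small and others are large. You should localize with $\tau_n=\inf\{t: V(X(t))\le 1/n\ \text{or}\ |X(t)|\ge n\}$; on $\{V\ge 1/n,\ |X|\le n\}$ every gap is bounded below, the integrand is bounded, and the stopped process is a genuine supermartingale. Since $X$ stays in $\Delta_d$ on $[0,T]$ by Proposition~\ref{prop_1516938218} and has continuous paths, $\tau_n\uparrow T$ a.s., and the Fatou argument then yields the claimed bound. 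With this adjustment the proof is complete.
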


\tref{thm_1516938726} is a direct consequence of these propositions as follows.
\begin{proof}[Proof of \tref{thm_1516938726}]
	\pref{prop_1516938218} implies \aref{ass_1515645743}~\iref{item_1515735667}.
	From \pref{prop_1516937693}, we have \aref{ass_1515645743}~\iref{item_1515740401}
	under the conditions that
	$c$ is bounded together with the first derivatives and $\alpha>6d+1/2$.
	This completes the proof.
\end{proof}

\subsection{Existence and Uniqueness}
This subsection is devoted to proving \pref{prop_1516938218}.
First, we show the pathwise uniqueness for \eqref{eq_NonColPartSys}.
\begin{lemma}\label{lem_1517027748}
	The pathwise uniqueness of solutions of \eqref{eq_NonColPartSys} holds.
\end{lemma}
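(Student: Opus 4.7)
The plan is to exploit the fact that the diffusion coefficient $\sigma$ in \eqref{eq_NonColPartSys} is constant, so the stochastic integrals of two solutions driven by the same Brownian motion $W$ cancel, reducing pathwise uniqueness to a deterministic Gronwall argument on a random ODE. Let $X$ and $\tilde X$ be two strong solutions with the same initial datum $\bar x\in\Delta_d$ and driving noise $W$, both satisfying $\prob(X(t),\tilde X(t)\in\Delta_d \text{ for all } 0\le t\le T)=1$. Set $Z(t)=X(t)-\tilde X(t)$. Then
\[
Z_i(t)=\int_0^t\bigl(f_i(X(s))-f_i(\tilde X(s))\bigr)\,ds,
\]
so $|Z(t)|^2$ is absolutely continuous with
\[
\tfrac{d}{dt}|Z(t)|^2=2\langle Z(t),f(X(t))-f(\tilde X(t))\rangle.
\]

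Next I would split $f=a+b$ and treat the two contributions separately. Because $\Delta_d$ is convex, the whole segment $\tilde X(t)+sZ(t)$, $s\in[0,1]$, lies in $\Delta_d$, where $a$ is smooth. The fundamental theorem of calculus gives
\[
\langle Z(t),a(X(t))-a(\tilde X(t))\rangle=\int_0^1\langle Z(t),a'(\tilde X(t)+sZ(t))Z(t)\rangle\,ds,
\]
and the nonpositivity identity \eqref{eq_1515216560} from \lref{lem_1515650878} (which encodes that each $\phi_{kl}'\le 0$) makes this integrand pointwise $\le 0$. For the smooth part, the bound $M=\sup_x|b'(x)|<\infty$ from \eqref{eq_1539590587} together with sub-multiplicativity yields
\[
\langle Z(t),b(X(t))-b(\tilde X(t))\rangle=\int_0^1\langle Z(t),b'(\tilde X(t)+sZ(t))Z(t)\rangle\,ds\le M|Z(t)|^2.
\]
Combining, $\tfrac{d}{dt}|Z(t)|^2\le 2M|Z(t)|^2$; since $Z(0)=0$, Gronwall's inequality forces $Z\equiv 0$ on $[0,T]$, which is the desired pathwise uniqueness.

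The one technical subtlety, and the only place one has to be mildly careful, is that $a$ is singular on $\partial\Delta_d$, so the differentiation $\tfrac{d}{dt}|Z(t)|^2$ and the integration-along-the-segment step must a priori be carried out where both solutions are separated from the boundary. I would localize with the stopping times
\[
\tau_n=\inf\Bigl\{t\in[0,T]:\min_{2\le i\le d}(X_i(t)-X_{i-1}(t))\wedge\min_{2\le i\le d}(\tilde X_i(t)-\tilde X_{i-1}(t))\le 1/n\Bigr\}\wedge T,
\]
perform the Gronwall estimate on $[0,\tau_n]$ where $a$ and $a'$ are bounded along the segment, conclude $Z\equiv 0$ on $[0,\tau_n]$, and then send $n\to\infty$: the assumption that $X,\tilde X$ remain in $\Delta_d$ almost surely, combined with path continuity, gives $\tau_n\uparrow T$ a.s. I do not expect any serious obstacle here; the proof is essentially the monotonicity argument of Cépa--Lépingle / Rogers--Shi adapted to the present coefficients.
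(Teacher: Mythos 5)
Your proposal is correct and follows the same overall strategy as the paper: cancel the stochastic integrals (constant diffusion), split $f=a+b$, exploit a one-sided Lipschitz/monotonicity property of $a$ and Lipschitz continuity of $b$, and close with Gronwall. The only local difference is how the monotonicity of the singular part is established: the paper derives $\sum_i(x_i-\tilde{x}_i)(a_i(x)-a_i(\tilde{x}))\le 0$ directly from the elementary inequality $(\xi-\eta)(\xi^{-1}-\eta^{-1})\le 0$ together with the rearrangement identity \eqref{eq_7439104813}, whereas you recover the same bound by integrating the quadratic-form inequality $\langle y,a'(\cdot)y\rangle\le 0$ of \lref{lem_1515650878} along the segment from $\tilde X(t)$ to $X(t)$, using convexity of $\Delta_d$. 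These are two equivalent formulations of the same dissipativity of $-a$; the paper's is slightly more elementary (no FTC, no differentiability of $a$ invoked), while yours ties the step to an existing lemma. Your explicit localization via the stopping times $\tau_n$ is careful but not strictly required: on each path both $X$ and $\tilde X$ are continuous on the compact interval $[0,T]$ with values in the open set $\Delta_d$, so a.s.\ each path stays in a compact subset of $\Delta_d$ and $a$, $a'$ are already bounded along the segment.
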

\begin{proof}
	Let $(X,W)$ and $(\tilde{X},W)$ be two solutions of \eqref{eq_NonColPartSys}.
	It\^o's formula yields
	\begin{align*}
		|X(t)-\tilde{X}(t)|^2
		=
			2
			\int_0^t
				A(X(s),\tilde{X}(s))\,
				ds
			+
			2
			\int_0^t
				B(X(s),\tilde{X}(s))\,
				ds,
	\end{align*}
	where
	\begin{align*}
		A(x,\tilde{x})
		&=
			\alpha
			\sum_{i=1}^d
				(x_i-\tilde{x}_i)
				(a_i(x)-a_i(\tilde{x})),
		&
		B(x,\tilde{x})
		&=
			\sum_{i=1}^d
				(x_i-\tilde{x}_i)
				(b_i(x)-b_i(\tilde{x})).
	\end{align*}
	From \eqref{eq_7439104813} and $(\xi-\eta)(\xi^{-1}-\eta^{-1})\leq 0$ for $\xi,\eta>0$,
	we have
	\begin{align*}
		A(x,\tilde{x})
		=
			\alpha
			\sum_{k,l;k>l}
				\{(x_k-x_l)-(\tilde{x}_k-\tilde{x}_l)\}
				\left\{
					\frac{1}{x_k-x_l}
					-
					\frac{1}{\tilde{x}_k-\tilde{x}_l}
				\right\}
		\leq
			0.
	\end{align*}
	The Lipschitz continuity of $b$ implies
	$|B(x,\tilde{x})|\leq K|x-\tilde{x}|^2$ for some $K$ that is independent of $x$ and $\tilde{x}$.
	Hence,
	$
		|X(t)-\tilde{X}(t)|^2
		\leq
			2K
			\int_0^t
				|X(s)-\tilde{X}(s)|^2\,
				ds
	$.
	Therefore, from Gronwall's inequality, we conclude the pathwise uniqueness.
\end{proof}

Next, we show the existence of solutions of \eqref{eq_NonColPartSys}.
Recall that there exists a unique strong solution of \eqref{eq_NonColPartSys} for $\alpha\geq 1/2$ and $c=0$
from \cite[Lemma~1]{RogersShi1993},
\cite[Theorem~3.1]{CepaLepingle1997},
and \cite[Theorem~2.2, Corollary~6.2]{GraczykMalecki2014}.
We use this result to show weak existence and uniqueness in law.
The proof is based on the method of \cite[Lemma~4.5]{Yor1980a}
(see also \cite[Proposition~2.1]{PitmanYor1981} and
\cite[Chapter~XI, Exercise~1.22]{RevuzYor1999}),
which is the Girsanov transformation for Bessel processes to restrict its parameter $\alpha$ to $1/2$.
Note that in this proof, we use only the result on
the unique existence of a strong solution of \eqref{eq_NonColPartSys} with $\alpha=1/2$ and $c=0$.

Before starting our discussion, we fix the notation.
Let $\alpha\geq 1/2$.
Set $\nu=\alpha-1/2$ and $h(x)=\prod_{k,l;k>l}(x_k-x_l)$ for $x\in\Delta_d$.
Let $X^{1/2,\mu}$ be a strong solution of \eqref{eq_NonColPartSys} with $\alpha=1/2$ and $c=0$.
We define processes $M=\{M(t)\}_{0\leq t\leq T}$
and $Z=\{Z(t)\}_{0\leq t\leq T}$ used in the Girsanov transformation by
\begin{align*}
	M(t)
	&=
		\sum_{i=1}^{d}
		\sum_{k;k\neq i}
		\int_{0}^{t}
			\frac{dW_i(s)}{X^{1/2,\mu}_i(s)-X^{1/2,\mu}_k(s)},
	&
	Z(t)
	&=
		\exp
			\left(
				\nu M(t)
				-
				\frac{\nu^2}{2} \langle M \rangle(t)
			\right).
\end{align*}
Because $X^{1/2,\mu}$ satisfies
$
	\prob
		(
			X^{1/2,\mu}(t)\in\Delta_d \text{\ for all\ } 0\leq t\leq T
		)
	=
		1
$,
the process $M$ is well-defined and a local martingale.
Although we see that $Z$ is a local martingale
because it is a solution of an SDE
\begin{align}\label{eq_1516957428}
	Z(t)=1+\nu\int_0^t Z(s)\,dM(s),
\end{align}
we can show that $Z$ is a martingale as follows.
\begin{lemma}
	Let \aref{ass_1516937057}~\iref{item_1516957962} and $\alpha\geq 1/2$ be satisfied.
	Then, the process $Z$ is expressed as
	\begin{multline*}
		Z(t)
		=
			\frac{h(X^{1/2,\mu}(t))^\nu}{h(\bar{x})^\nu}
			\exp
				\left(
					-
					\nu
					\sum_{k,l;k>l}
						\int_0^t
							\frac{\mu_k(X^{1/2,\mu}(s))-\mu_l(X^{1/2,\mu}(s))}{X^{1/2,\mu}_k(s)-X^{1/2,\mu}_l(s)}\,
							ds
				\right.\\
				\left.
					-
					\frac{\nu^2}{2}
					\sum_{i=1}^{d}\sum_{k;k \neq i}
					\int_{0}^t
						\frac{ds}{|X_i^{1/2,\mu}(s)-X_k^{1/2,\mu}(s)|^2}
				\right)
	\end{multline*}
	and is a martingale.
\end{lemma}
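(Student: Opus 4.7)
The plan is to apply It\^o's formula to $\nu \log h(X^{1/2,\mu}(t))$, rearrange so that the exponent $\nu M(t) - \tfrac{\nu^2}{2}\langle M\rangle(t)$ emerges in closed form, and then upgrade the non-negative local martingale $Z$ to a true martingale by dominating it with an integrable random variable.

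On $\Delta_d$ we have
\[
\partial_i \log h(x) = \sum_{k;k \neq i}\frac{1}{x_i - x_k}, \qquad \partial_i^2 \log h(x) = -\sum_{k;k \neq i}\frac{1}{(x_i - x_k)^2}.
\]
Applying It\^o's formula to $\nu \log h(X^{1/2,\mu}(t))$ using \eqref{eq_NonColPartSys} with $\alpha_{ik}=1/2$, $\sigma=\idMat$, and $b = \mu$, the stochastic-integral part is exactly $\nu M(t)$. The drift from the singular part $a$ contributes $\tfrac{\nu}{2}\int_0^t \sum_i(\sum_{k;k\neq i}(X_i-X_k)^{-1})^2\,ds$, the drift from $\mu$ contributes $\nu \int_0^t \sum_i \mu_i(X)\sum_{k;k\neq i}(X_i-X_k)^{-1}\,ds$, and the It\^o correction from $\partial_i^2 \log h$ contributes $-\tfrac{\nu}{2}\int_0^t \sum_i \sum_{k;k\neq i}(X_i-X_k)^{-2}\,ds$. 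The crucial observation is the classical identity $\sum_i (\sum_{k;k\neq i}(x_i-x_k)^{-1})^2 = \sum_i \sum_{k;k\neq i}(x_i-x_k)^{-2}$ on $\Delta_d$ (verified by a short partial-fraction computation: the off-diagonal cross-terms sum to zero). This makes the two $\tfrac{\nu}{2}$-contributions cancel and at the same time identifies $\langle M\rangle(t) = \sum_i \sum_{k;k\neq i}\int_0^t(X_i^{1/2,\mu}-X_k^{1/2,\mu})^{-2}\,ds$. The $\mu$-term rewrites, via \eqref{eq_7439104813}, as $\nu \int_0^t \sum_{k,l;k>l}(\mu_k - \mu_l)/(X_k - X_l)\,ds$. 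Solving for $\nu M(t)$, subtracting $\tfrac{\nu^2}{2}\langle M\rangle(t)$, and exponentiating then give the claimed formula for $Z$.

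By \eqref{eq_1516957428}, $Z$ is a non-negative local martingale and hence a supermartingale. The assumption $\mu_k(x) \geq \mu_l(x)$ for $x \in \Delta_d$ and $k > l$ makes $(\mu_k - \mu_l)/(X^{1/2,\mu}_k - X^{1/2,\mu}_l) \geq 0$, so both exponential factors in the explicit formula are bounded above by $1$. Combined with the trivial bound $h(x) \leq (2|x|)^{d(d-1)/2}$ on $\Delta_d$, this yields
\[
0 \leq Z(t) \leq h(\bar{x})^{-\nu}\bigl(2\sup_{0 \leq s \leq T}|X^{1/2,\mu}(s)|\bigr)^{\nu d(d-1)/2}.
\]
Since $\mu'$ is bounded, a moment estimate analogous to \lref{lem_1516244752} gives $\expect[\sup_{0 \leq s \leq T}|X^{1/2,\mu}(s)|^p] < \infty$ for every $p \geq 1$, so the right-hand side lies in every $L^p$ and $\{Z(t)\}_{0 \leq t \leq T}$ is uniformly integrable; hence $Z$ is a true martingale. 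The main obstacle is the cancellation in the It\^o step, which hinges on the algebraic identity above; without it a singular $(X_k - X_l)^{-2}$ drift would survive and prevent the clean representation of $Z$. Once the cancellation is established, the domination upgrading $Z$ from a local to a true martingale is routine.
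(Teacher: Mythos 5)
Your proof is correct and follows essentially the same route as the paper: Itô's formula applied to $\log h(X^{1/2,\mu})$, the algebraic cancellation identity $\sum_i u_{i1}^2 = \sum_i u_{i2}$ (which the paper pulls from \cite[p.252]{AndersonGuionnetZeitouni2010}), the rewriting of the drift term via \eqref{eq_7439104813}, and then uniform integrability from the monotonicity $\mu_k \geq \mu_l$ together with the moment bound of \lref{lem_1516244752}. The only small difference is that you spell out the polynomial bound $h(x)\leq (2|x|)^{d(d-1)/2}$, whereas the paper leaves that step implicit; note also that \lref{lem_1516244752} applies to $X^{1/2,\mu}$ directly, not merely ``analogously.''
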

\begin{proof}
	We set $F(x)=\log h(x)=\sum_{k,l;k>l} \log (x_k-x_l)$ for $x\in\Delta_d$.
	Then the derivatives of $F$ are given by
	\begin{align*}
		\frac{\partial F}{\partial x_i}(x)
		=
			\sum_{k;k \neq i}\frac{1}{x_i-x_k}
		=:
			u_{i1}(x),
		&
		&
		\frac{\partial^2 F}{\partial x_i^2}(x)
		=
			-\sum_{k;k \neq i}\frac{1}{(x_i-x_k)^2}
		=:
			-u_{i2}(x),
	\end{align*}
	for all $i=1,\ldots,d$.
	From \cite[p.252]{AndersonGuionnetZeitouni2010},
	we note that
	\begin{align}\label{eq_u12}
		\sum_{i=1}^{d}
		u_{i1}(x)^2
		=
			\sum_{i=1}^{d}
			u_{i2}(x).
	\end{align}

	Applying Ito's formula and using \eqref{eq_u12}, we have
	$
		F(X^{1/2,\mu}(t))
		=
			F(\bar{x})
			+A(t)
			+M(t)
	$,
	where
	\begin{align*}
		A(t)
		&=
			\sum_{i=1}^{d}
			\int_{0}^{t}
				u_{i1}(X^{1/2,\mu}(s))
				\mu_i(X^{1/2,\mu}(s))\,
				ds.
	\end{align*}
	Hence,
	\begin{align*}
		Z(t)
		=
			\exp
				\left(
					\nu
					\{
						F(X^{1/2,\mu}(t))-F(\bar{x})-A(t)
					\}
					-
					\frac{\nu^2}{2}
					\langle M\rangle(t)
				\right).
	\end{align*}

	Next, we calculate $Z(t)$.
	The definition of $F$ yields
	\begin{align*}
		\exp(
			\nu
			\{
				F(X^{1/2,\mu}(t))-F(\bar{x})
			\}
			)
		=
			\frac{h(X^{1/2,\mu}(t))^\nu}{h(\bar{x})^\nu}.
	\end{align*}
	From \eqref{eq_7439104813}, we have
	\begin{align*}
		\sum_{i=1}^{d}
			u_{i1}(x)
			\mu_i(x)
		=
			\sum_{i=1}^{d}
				\mu_i(x)
				\sum_{k \neq i}\frac{1}{x_i-x_k}
		=
			\sum_{k,l;k>l}
				\frac{\mu_k(x)-\mu_k(x)}{x_k-x_l},
	\end{align*}
	which implies
	\begin{align*}
		A(t)
		=
			\sum_{k,l;k>l}
				\int_0^t
					\frac{\mu_k(X^{1/2,\mu}(s))-\mu_l(X^{1/2,\mu}(s))}{X^{1/2,\mu}_k(s)-X^{1/2,\mu}_l(s)}\,
					ds.
	\end{align*}
	From \eqref{eq_u12}, we obtain
	\begin{align*}
		\langle M\rangle(t)
		=
			\sum_{i=1}^{d}
			\int_{0}^{t}
				u_{i1}(X^{1/2,\mu}(s))^2\,
				ds
		=
			\sum_{i=1}^{d}
			\int_{0}^{t}
				u_{i2}(X^{1/2,\mu}(s))\,
				ds.
	\end{align*}
	Combining the above, we obtain the expression for $Z(t)$.

	Next, we show that $Z$ is a martingale.
	Since $\mu_k(x)\geq \mu_l(x)$ for $k > l$, from the expression of $Z$, we have for any $p>1$ and stopping time $\tau \leq T$,
	\begin{align*}
		|Z(\tau)|^{p}
		\leq
			\sup_{0 \leq t \leq T}
			\frac{h(X^{1/2,\mu}(t))^{p\nu}}{h(\bar{x})^{p \nu}}.
	\end{align*}
	Lemma \ref{lem_1516244752} therefore yields the family of random variables $Z(\tau)$, is uniformly integrable.
	Hence from \cite[Proposition 1.7 in chapter IV]{RevuzYor1999} , $Z$ is a martingale.
\end{proof}

\begin{lemma}\label{lem_1517027501}
	Let \aref{ass_1516937057}~\iref{item_1516957962} and $\alpha\geq 1/2$ be satisfied.
	If we assume that $c=0$,
	then we have the following:
	\begin{enumerate}
		\item	\label{item_1517028015}
				A weak solution of \eqref{eq_NonColPartSys} on $\Delta_d$ for all $0\leq t \leq T$ exists
				and uniqueness in law holds.
		\item	\label{item_1517028029}
				For any measurable function $g\colon C([0,T];\RealNum^d)\to\RealNum$,
				we have
				\begin{align*}
					\expect[g(X^{\alpha,\mu})]
					=
						\expect[g(X^{1/2,\mu})Z(T)]
				\end{align*}
				provided that all the above expectations exist.
	\end{enumerate}
\end{lemma}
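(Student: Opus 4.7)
The proof plan is to use the Girsanov transformation with Radon--Nikodym density $Z(T)$ from the preceding lemma, reversing the computation that led to that expression for $Z$.

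First, since the previous lemma established that $Z = \{Z(t)\}_{0 \le t \le T}$ is a true $\prob$-martingale, I would define a new probability measure $\tilde{\prob}$ on $\sigmaField(T)$ by $d\tilde{\prob}/d\prob = Z(T)$. Using \eqref{eq_1516957428} together with L\'evy's characterization (or the standard Girsanov theorem), the process
\begin{align*}
    \tilde{W}_i(t)
    = W_i(t) - \nu \int_0^t \sum_{k; k \neq i} \frac{ds}{X^{1/2,\mu}_i(s)-X^{1/2,\mu}_k(s)},
    \quad i=1,\dots,d,
\end{align*}
is a standard $d$-dimensional Brownian motion under $\tilde{\prob}$ on $[0,T]$. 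This step is valid because $X^{1/2,\mu}$ stays in $\Delta_d$ for all $0\le t\le T$ almost surely under $\prob$, and hence (since $\tilde\prob\ll\prob$) also under $\tilde{\prob}$.

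Next, I would substitute $dW_i(t) = d\tilde{W}_i(t) + \nu \sum_{k\neq i}(X^{1/2,\mu}_i(t)-X^{1/2,\mu}_k(t))^{-1}\,dt$ into the SDE satisfied by $X^{1/2,\mu}$. Since $\alpha = 1/2 + \nu$, the singular drift becomes
\begin{align*}
    \Bigl(\tfrac{1}{2} + \nu\Bigr)\sum_{k;k\neq i}\frac{dt}{X^{1/2,\mu}_i(t)-X^{1/2,\mu}_k(t)}
    = \alpha\sum_{k;k\neq i}\frac{dt}{X^{1/2,\mu}_i(t)-X^{1/2,\mu}_k(t)},
\end{align*}
so under $\tilde{\prob}$ the pair $(X^{1/2,\mu}, \tilde{W})$ is a weak solution of \eqref{eq_NonColPartSys} with parameters $\alpha$, $\mu$, $c=0$, and $\sigma = \idMat$, taking values in $\Delta_d$ on the whole interval $[0,T]$. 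This proves weak existence in assertion~\iref{item_1517028015}. Uniqueness in law then follows by combining the pathwise uniqueness established in \lref{lem_1517027748} with the Yamada--Watanabe theorem.

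For assertion~\iref{item_1517028029}, I would use the very definition of $\tilde{\prob}$: for any measurable $g\colon C([0,T];\RealNum^d)\to\RealNum$ for which the relevant expectations exist,
\begin{align*}
    \expect[g(X^{1/2,\mu})Z(T)]
    = \expect_{\tilde{\prob}}[g(X^{1/2,\mu})]
    = \expect[g(X^{\alpha,\mu})],
\end{align*}
where the last equality uses the uniqueness in law from \iref{item_1517028015} applied to the weak solution $X^{1/2,\mu}$ of the $\alpha$-SDE under $\tilde{\prob}$. The main subtlety is simply making sure the shift produced by Girsanov cancels against the difference between the $1/2$- and $\alpha$-drifts; once the martingale property of $Z$ (the content of the previous lemma) is available, everything else is essentially bookkeeping.
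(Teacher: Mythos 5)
Your proof is correct and follows essentially the same Girsanov-change-of-measure argument as the paper: define $\tilde\prob$ via $d\tilde\prob/d\prob=Z(T)$, observe that the Girsanov drift shift turns the $1/2$-drift into the $\alpha$-drift, and read off both the weak existence and the identity $\expect[g(X^{\alpha,\mu})]=\expect[g(X^{1/2,\mu})Z(T)]$. The only (harmless) divergence is in the uniqueness step: you invoke Yamada--Watanabe together with \lref{lem_1517027748}, whereas the paper transfers uniqueness in law directly from the $\alpha=1/2$ case through the absolutely continuous change of measure; both routes are valid and rest on machinery already available.
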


\begin{proof}
	We define a new measure $\prob_T(F)=\expect[Z(T)\indicator{F}]$ for $F \in \sigmaField(T)$.
	Then, because $Z$ is a martingale, $\prob_T$ is a probability measure.
	From the Girsanov theorem, the process $B=\{(B_i(t),\dots,B_d(t))\}_{0\leq t\leq T}$ defined by
	\begin{align*}
		B_i(t)
		=
			W_i(t)
			-
			\langle W_i, \nu M \rangle(t)
		=
			W_i(t)
			-
			\nu
			\sum_{k;k\neq i}
			\int_{0}^{t}
				\frac{ds}{X^{1/2,\mu}_i(s)-X^{1/2,\mu}_k(s)}
	\end{align*}
	is a standard Brownian motion on the probability space $(\probSp, \sigmaField(T),\prob_T)$.
	Moreover, we observe that
	\begin{align*}
		X^{1/2,\mu}_i(t)
		&=
			\bar{x}_i
			+
			\int_{0}^{t}
				\left(
					\sum_{k;k\neq i}
						\frac{1/2}{X^{1/2,\mu}_i(s)-X^{1/2,\mu}_k(s)}
						+
						\mu_i(X^{1/2,\mu}(s))
				\right)\,
				ds
			+
			W_i(t)\\
		&=
			\bar{x}_i
			+
			\int_{0}^{t}
				\left(
					\sum_{k;k\neq i}
						\frac{\alpha}{X^{1/2,\mu}_i(s)-X^{1/2,\mu}_k(s)}
					+
					\mu_i(X^{1/2,\mu}(s))
				\right)\,
				ds
			+
			B_i(t),
	\end{align*}
	and thus $(X^{1/2,\mu},B)$ is a weak solution of \eqref{eq_NonColPartSys}
	with $\alpha\geq 1/2$ and $c=0$ on the probability space $(\probSp, \sigmaField(T),\prob_T)$.
	The uniqueness follows from the uniqueness of the case $\alpha=1/2$ and $c=0$.
	This concludes the proof of the statement.
\end{proof}
We generalize \lref{lem_1517027501} as follows.
\begin{lemma}\label{lem_existence}
	Let \aref{ass_1516937057}~\iref{item_1516957962} and $\alpha\geq 1/2$ be satisfied.
	Then we have the following:
	\begin{enumerate}
		\item	\label{item_1516960355}
				A weak solution of \eqref{eq_NonColPartSys} on $\Delta_d$ for all $0\leq t \leq T$ exists
				and uniqueness in law holds.
		\item	\label{item_1516960406}
				For any measurable function $g\colon C([0,T];\RealNum^d)\to\RealNum$,
				we have
				\begin{align*}
					\expect[g(X^{\alpha,\mu+c})]
					&=
						\expect
							\left[
								g(X^{\alpha,\mu})
								\exp
								\left(
									\sum_{i=1}^{d}
									\int_{0}^{T}
										c_i(X^{\alpha,\mu}(s))\,
										dW_i(s)
								\right.
							\right.\\
					&\phantom{=}\qquad\qquad\qquad\qquad\qquad\qquad
							\left.
								\left.
									-
									\frac{1}{2}
									\int_{0}^{T}
										|c(X^{\alpha,\mu}(s))|^2
									ds
								\right)
							\right]
				\end{align*}
				provided that all the above expectations exist.
		\item	\label{item_1516960452}
				Let $p>1$.
				For any measurable function $g\colon C([0,T];\RealNum^d)\to\RealNum$,
				we have
				\begin{align*}
					\expect[|g(X^{\alpha,\mu+c})|]
					\leq
						C
						\expect[|g(X^{\alpha,\mu})|^p]^{1/p},
				\end{align*}
				where $C$ is a positive constant that is independent of $g$.
	\end{enumerate}
\end{lemma}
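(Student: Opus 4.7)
The plan is to reduce the general case to \lref{lem_1517027501} by a second Girsanov transformation, this time used to add the bounded drift $c$ to the process $X^{\alpha,\mu}$ constructed in that lemma. Because $c$ is bounded together with all its derivatives by \aref{ass_1516937057}~\iref{item_1516957962}~\iref{item_c}, Novikov's criterion will be immediate and the standard Girsanov machinery will apply without the extra work required in the first transformation (where the integrand was the singular $(x_i-x_k)^{-1}$).

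For \iref{item_1516960355} and \iref{item_1516960406}, I would start by defining on $(\probSp,\sigmaField(T),\prob)$ the exponential
\begin{align*}
    \tilde Z(t)
    =
        \exp\left(
            \sum_{i=1}^d
            \int_0^t c_i(X^{\alpha,\mu}(s))\,dW_i(s)
            -
            \frac12
            \int_0^t |c(X^{\alpha,\mu}(s))|^2\,ds
        \right),
\end{align*}
where $(X^{\alpha,\mu},W)$ is the weak solution of \eqref{eq_NonColPartSys} with drift $\mu$ that is granted by \lref{lem_1517027501}. Boundedness of $c$ gives $\expect[\exp(\tfrac12\int_0^T |c(X^{\alpha,\mu}(s))|^2\,ds)] < \infty$, so Novikov's criterion implies $\tilde Z$ is a uniformly integrable martingale. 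Introducing $\tilde\prob_T(F) = \expect[\tilde Z(T)\indicator{F}]$ and applying Girsanov's theorem, the process $\tilde B_i(t) = W_i(t) - \int_0^t c_i(X^{\alpha,\mu}(s))\,ds$ is a standard $d$-dimensional Brownian motion under $\tilde\prob_T$, and one checks directly that $(X^{\alpha,\mu},\tilde B)$ solves \eqref{eq_NonColPartSys} with drift $a+\mu+c$ on $\Delta_d$ for all $0 \le t \le T$. This yields weak existence. For uniqueness in law, combine this weak existence with the pathwise uniqueness supplied by \lref{lem_1517027748} via the Yamada--Watanabe theorem. Assertion \iref{item_1516960406} is then just the change-of-measure identity
\begin{align*}
    \expect[g(X^{\alpha,\mu+c})]
    =
    \expect_{\tilde\prob_T}[g(X^{\alpha,\mu})]
    =
    \expect[g(X^{\alpha,\mu})\tilde Z(T)],
\end{align*}
applied to $g$ for which all three expectations exist.

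For \iref{item_1516960452}, let $p>1$ and $p'=p/(p-1)$. By \iref{item_1516960406} and H\"older's inequality,
\begin{align*}
    \expect[|g(X^{\alpha,\mu+c})|]
    \le
    \expect[|g(X^{\alpha,\mu})|^p]^{1/p}
    \expect[\tilde Z(T)^{p'}]^{1/p'}.
\end{align*}
To bound the second factor uniformly, rewrite
\begin{align*}
    \tilde Z(T)^{p'}
    =
    \exp\left(\frac{(p')^2-p'}{2}\int_0^T |c(X^{\alpha,\mu}(s))|^2\,ds\right)
    \mathcal E_T,
\end{align*}
where $\mathcal E_T = \exp(p'\int_0^T c(X^{\alpha,\mu}(s))\,dW(s) - \tfrac{(p')^2}{2}\int_0^T |c(X^{\alpha,\mu}(s))|^2\,ds)$ is again a Novikov exponential martingale thanks to boundedness of $c$, so $\expect[\mathcal E_T]=1$. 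Since the prefactor is deterministically bounded by $\exp(\tfrac{(p')^2-p'}{2}\|c\|_\infty^2 T)$, we get $\expect[\tilde Z(T)^{p'}] \le \exp(\tfrac{(p')^2-p'}{2}\|c\|_\infty^2 T) =: C^{p'}$, independent of $g$.

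The only delicate point is verifying that the Girsanov-transformed equation really is \eqref{eq_NonColPartSys} with drift $\mu+c$ on $\Delta_d$, not merely up to a stopping time at $\partial\Delta_d$; this is handled by the fact that $X^{\alpha,\mu}$ already stays in $\Delta_d$ for all $t\in[0,T]$ almost surely under $\prob$, and the measure $\tilde\prob_T$ is equivalent to $\prob$ on $\sigmaField(T)$, so the non-collision property is preserved. Everything else is bookkeeping.
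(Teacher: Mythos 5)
Your proof is correct and follows essentially the same route as the paper: a second Girsanov transformation using the exponential martingale built from the bounded function $c$, with Novikov's criterion made trivial by boundedness, then H\"older's inequality together with the rewriting $\tilde Z(T)^{p'} = \exp(\frac{(p')^2-p'}{2}\langle\tilde M\rangle(T))\,\mathcal E_T$ to bound $\expect[\tilde Z(T)^{p'}]$ by $\exp(\frac{(p')^2-p'}{2}\|c\|_\infty^2 T)$. The only cosmetic difference is that for uniqueness in law in item \textup{(1)} you invoke Yamada--Watanabe via the pathwise uniqueness of \lref{lem_1517027748}, whereas the paper deduces uniqueness in law directly from the equivalence of measures and the uniqueness in law already established for $c=0$ in \lref{lem_1517027501}; both are valid, and your explicit remark that equivalence of $\tilde\prob_T$ and $\prob$ preserves the non-collision event is a point the paper leaves implicit.
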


\begin{proof}
	Let $(X^{\alpha,\mu},W)$ be a weak solution of
	\eqref{eq_NonColPartSys} with $\alpha\geq 1/2$ and $c=0$.
	We set
	$
		\tilde{M}(t)
		=
		\sum_{i=1}^{d}
			\int_0^t
				c_i(X^{\alpha,\mu}(s))\,
				dW_i(s)
	$.
	Because
	$
		\langle \tilde{M} \rangle (t)
		=
			\int_0^t
				|c(X^{\alpha,\mu}(s))|^2\,
				ds
	$
	and	$c$ is bounded,
	the process $\tilde{M}$ satisfies the Novikov condition.
	Hence, for every $q\geq 1$,
	$\{\tilde{Z}_q(t)=\exp(q\tilde{M}(t)-\frac{q^2}{2}\langle \tilde{M} \rangle(t)\}_{0\leq t\leq T}$
	is a martingale starting at $1$.

	Next, we prove Assertion~\iref{item_1516960355}.
	By using the Girsanov transformation
	and the weak existence and uniqueness in law of solutions of \eqref{eq_NonColPartSys} with $c=0$ (\lref{lem_1517027501}),
	we see weak existence and uniqueness in law of solutions of \eqref{eq_NonColPartSys} with any function $c$.
	Note that $\expect[g(X^{\alpha,\mu+c})]=\expect[g(X^{\alpha,\mu})\tilde{Z}_1(T)]$ holds.
	The proof of Assertion~\iref{item_1516960406} is complete.

	Next, we prove Assertion~\iref{item_1516960452}.
	For any $p,q>1$ with $1/p+1/q=1$, H\"older's inequality yields
	$
		\expect[|g(X^{\alpha,\mu+c})|]
		\leq
			\expect[|g(X^{\alpha,\mu})|^p]^{1/p}
			\expect[\tilde{Z}_1(T)^q]^{1/q}
	$.
	Therefore, we need to prove that $\expect[\tilde{Z}_1(T)^q]$ is finite.
	Because
	$
		\tilde{Z}_1(T)^q
		=
			e^{q(q-1) \langle \tilde{M} \rangle(T)/2}
			\tilde{Z}_q(T)
	$,
	$c$ is bounded and $\tilde{Z}_q$ is a martingale starting at $1$,
 	we have
	$
		\expect[\tilde{Z}_1(T)^q]
		\leq
			e^{q(q-1) R^2/2}
	$,
	where $R$ is a positive constant such that $|c(x)|\leq R$ for any $x\in\RealNum^d$.
	This proves Assertion~\iref{item_1516960452}.
\end{proof}

\begin{proof}[Proof of \pref{prop_1516938218}]
	\lref{lem_1517027748} and \lref{lem_existence}~\iref{item_1516960355} imply the assertion.
\end{proof}

\subsection{Inverse Moments}
Next, we prove \pref{prop_1516937693}.
\begin{lemma}\label{lem_1516251703}
	Let \aref{ass_1516937057}~\iref{item_1516957962} and $\alpha\geq 1/2$ be satisfied.
	Assume that $c=0$.
	For any $0 \leq q\leq\alpha-1/2$, we have \eqref{eq_1517028447}.
\end{lemma}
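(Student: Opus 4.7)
The plan is to use the Girsanov transformation of \lref{lem_1517027501} to express the inverse moment for $X^{\alpha,\mu}$ as an expectation involving the simpler process $X^{1/2,\mu}$, and then to exploit the non-positivity of the exponent in $Z$ to reduce the problem to polynomial moments of $X^{1/2,\mu}$, which are already controlled by \lref{lem_1516244752}.

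First, I would fix $0\leq t\leq T$ and distinct indices $i,k$, and apply \lref{lem_1517027501}~\iref{item_1517028029} with $g(X)=|X_i(t)-X_k(t)|^{-q}$. Since this functional is $\sigmaField(t)$-measurable and $Z$ is a martingale, the tower property allows one to replace $Z(T)$ by $Z(t)$, giving
\begin{align*}
	\expect[|X_i^{\alpha,\mu}(t)-X_k^{\alpha,\mu}(t)|^{-q}]
	=\expect[|X_i^{1/2,\mu}(t)-X_k^{1/2,\mu}(t)|^{-q}\,Z(t)].
\end{align*}

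Next I would bound $Z(t)$ using the explicit formula already derived in the excerpt. Setting $\nu=\alpha-1/2\geq 0$, on $\Delta_d$ each ratio $(\mu_{k'}(x)-\mu_{l'}(x))/(x_{k'}-x_{l'})$ with $k'>l'$ is non-negative by the monotonicity clause in \aref{ass_1516937057}, and the quadratic-variation term is manifestly non-positive. Both contributions in the exponential of $Z(t)$ are therefore non-positive, whence
\begin{align*}
	Z(t)\leq \frac{h(X^{1/2,\mu}(t))^\nu}{h(\bar{x})^\nu}.
\end{align*}
Assuming without loss of generality $i>k$, the factor $(X_i^{1/2,\mu}(t)-X_k^{1/2,\mu}(t))^\nu$ inside $h(X^{1/2,\mu}(t))^\nu$ cancels against $(X_i^{1/2,\mu}(t)-X_k^{1/2,\mu}(t))^{-q}$ to yield the non-negative power $(X_i^{1/2,\mu}(t)-X_k^{1/2,\mu}(t))^{\nu-q}$, which exists precisely because $q\leq\nu$. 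What remains is a polynomial in the components of $X^{1/2,\mu}(t)$ whose degree depends only on $d$ and $\nu$, so \lref{lem_1516244752} provides a bound on the expectation that is uniform in $t\in[0,T]$. Taking the maximum over the finite set of pairs $(i,k)$ and the supremum over $t$ completes the argument.

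The main subtlety (rather than obstacle) is the sign analysis leading to $Z(t)\leq h(X^{1/2,\mu}(t))^\nu/h(\bar{x})^\nu$; the monotonicity hypothesis on $\mu$ in \aref{ass_1516937057} is essential, since without it the drift-exponential in $Z(t)$ could have unknown sign, forcing either a Novikov-type auxiliary estimate or a Cauchy--Schwarz split that would complicate the polynomial moment bookkeeping.
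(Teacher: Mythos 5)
Your proposal is correct and follows essentially the same route as the paper: apply Lemma~\ref{lem_1517027501}~(\ref{item_1517028029}) with $g(w)=|w_i(t)-w_k(t)|^{-q}$, bound the Girsanov density by $h(X^{1/2,\mu}(t))^{\nu}/h(\bar{x})^{\nu}$ via the sign conditions on $\mu$ and the non-positive quadratic-variation term, cancel the $(X_i-X_k)$ power (possible precisely since $q\le\nu$), and conclude from the polynomial moment bound of Lemma~\ref{lem_1516244752}. The only difference is cosmetic: you spell out the tower-property/martingale step that replaces $Z(T)$ by $Z(t)$, which the paper leaves implicit.
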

\begin{proof}
	Applying \lref{lem_1517027501}~\iref{item_1517028029}
	with $g(w)=|w_i(t)-w_k(t)|^{-q}$ for $w\in C([0,T];\RealNum^d)$,
	we have
	\begin{align*}
		\expect[|X_i^{\alpha,\mu}(t)-X_k^{\alpha,\mu}(t)|^{-q}]
		&\leq
		\frac{1}{h(\bar{x})^{\alpha-1/2}}
		\expect
		\left[
				\frac{h(X^{1/2,\mu}(t))^{\alpha-1/2}}{|X_i^{1/2,\mu}(t)-X_k^{1/2,\mu}(t)|^q}
		\right]
		\leq
		\frac{C'}{h(\bar{x})^{\alpha-1/2}},
	\end{align*}
	where $C'$ is a positive constant that depends only on
	$\alpha$, $q$, $\bar{x}$, $T$, $K$, and $d$.
	In the last estimate, we used \lref{lem_1516244752}
	because the integrand is reducible
	with respect to $|X^{1/2,\mu}_i(t)-X^{1/2,\mu}_k(t)|^2$.
\end{proof}

\begin{proof}[Proof of \pref{prop_1516937693}]
	We use \lref{lem_existence}~\iref{item_1516960452}
	with $p=(\alpha-1/2)/q>1$ and \lref{lem_1516251703}.
	Then
	\begin{align*}
		\expect[|X_i^{\alpha,\mu+c}(t)-X_k^{\alpha,\mu+c}(t)|^{-q}]
		&\leq
			C
			\expect[|X_i^{\alpha,\mu+c}(t)-X_k^{\alpha,\mu+c}(t)|^{-(\alpha-1/2)}]^{q/(\alpha-1/2)}\\
		&<
			\infty,
	\end{align*}
	which implies the conclusion.
\end{proof}

\appendix
\section{Estimate of Solution of Matrix-valued ODE}\label{Appendix}
We consider a continuous $\Sym{n}{\RealNum}$-valued function $a$ defined on $[0,\infty)$
and denote the eigenvalues of $a(t)$ by $\lambda_1(t),\dots,\lambda_n(t)$.
We assume that a constant $L$ exists such that
$\lambda_i(t)\leq L$ holds for any $1\leq i\leq n$ and $t\geq 0$.
The assumption $a(t)\in\Sym{n}{\RealNum}$ implies that $a(t)$ is diagonalizable;
more precisely, there exists an orthogonal matrix $q(t)$ such that
$a(t)=q(t)^\top \Lambda(t) q(t)$, where $\Lambda(t)=\mathop{\mathrm{diag}}\{\lambda_1(t),\dots,\lambda_n(t)\}$.
We consider a continuous $\Mat{n}{\RealNum}$-valued function $b$ defined on $[0,\infty)$
and assume that a positive constant $M$ exists such that $|b(t)|\leq M$ for any $t$.
Set $f(t)=a(t)+b(t)$.

For a function $f$ satisfying the conditions above,
we consider $\Mat{n}{\RealNum}$-valued ODEs
\begin{gather*}
	\frac{dy}{dt}(t)
	=
		+
		f(t)y(t),
	\qquad
	y(0)
	=
		\idMat,\\
	\frac{dz}{dt}(t)
	=
		-
		z(t)
		f(t),
	\qquad
	z(0)
	=
		\idMat.
\end{gather*}
From $\frac{d(zy)}{dt}(t)=0$, we have $y(t)z(t)=z(t)y(t)=\idMat$.
For every $0\leq s\leq t$,
we set $\tilde{y}(s,t)=y(t)z(s)$.
Then, we have
\begin{gather*}
	\frac{d\tilde{y}}{dt}(s,t)
	=
		+
		f(t)
		\tilde{y}(s,t),
	\qquad
	\tilde{y}(s,s)=\idMat.
\end{gather*}

\begin{proposition}\label{prop_1510907899}
	For any $0\leq s\leq t<\infty$ and $v\in\RealNum^n$, we have
	\begin{gather*}
		|\tilde{y}(s,t)|
		\leq
			e^{(L+M)(t-s)}
			|\idMat|,
		\qquad
		|\tilde{y}(s,t)v|
		\leq
			e^{(L+M)(t-s)}
			|v|.
	\end{gather*}
\end{proposition}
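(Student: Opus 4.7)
The plan is to prove the vector estimate first by a Gronwall-style argument applied to $|\tilde{y}(s,t)v|^2$, and then deduce the matrix estimate by summing the vector estimate over the standard basis.

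For the vector bound, fix $v\in\RealNum^n$ and $0\leq s$, and set $w(t)=\tilde{y}(s,t)v$ for $t\geq s$. Differentiating, I get
\begin{align*}
\frac{d}{dt}|w(t)|^2
=2\langle w(t),f(t)w(t)\rangle
=2\langle w(t),a(t)w(t)\rangle+2\langle w(t),b(t)w(t)\rangle.
\end{align*}
Using the diagonalization $a(t)=q(t)^\top\Lambda(t)q(t)$ with $q(t)$ orthogonal, and writing $u=q(t)w(t)$, the symmetric part becomes $\langle u,\Lambda(t)u\rangle=\sum_i\lambda_i(t)u_i^2\leq L|u|^2=L|w(t)|^2$. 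For the non-symmetric part, Cauchy--Schwarz combined with sub-multiplicativity of $|\cdot|$ yields $|\langle w(t),b(t)w(t)\rangle|\leq |w(t)||b(t)w(t)|\leq |b(t)||w(t)|^2\leq M|w(t)|^2$. Putting the two bounds together gives
\begin{align*}
\frac{d}{dt}|w(t)|^2\leq 2(L+M)|w(t)|^2.
\end{align*}
Since $|w(s)|^2=|v|^2$, Gronwall's inequality (or direct integration of $(d/dt)\log|w(t)|^2$ when $|w(t)|\neq 0$, with a trivial extension when it vanishes) gives $|w(t)|^2\leq |v|^2 e^{2(L+M)(t-s)}$, which is the desired vector estimate.

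For the matrix estimate, I would apply the vector inequality to the standard basis $\{e_1,\dots,e_n\}$ of $\RealNum^n$. Writing the Hilbert--Schmidt norm as $|\tilde{y}(s,t)|^2=\sum_{j=1}^n|\tilde{y}(s,t)e_j|^2$, the vector bound with $v=e_j$ yields
\begin{align*}
|\tilde{y}(s,t)|^2
\leq\sum_{j=1}^n|e_j|^2 e^{2(L+M)(t-s)}
=n\, e^{2(L+M)(t-s)}
=|\idMat|^2 e^{2(L+M)(t-s)},
\end{align*}
so $|\tilde{y}(s,t)|\leq |\idMat|e^{(L+M)(t-s)}$.

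I do not foresee a real obstacle: the argument is a standard energy/Gronwall computation once one recognises that only the symmetric part of $f(t)$ needs an eigenvalue bound, while the remainder can be dominated in operator norm. The only point to be slightly careful about is ensuring the decomposition of $\langle w,f(t)w\rangle$ into symmetric and non-symmetric contributions, and keeping track of the fact that $\tilde{y}(s,t)$ is a right-evolution from $s$ rather than from $0$ (which is immediate because $\tilde{y}(s,s)=\idMat$ provides the correct initial condition for the energy ODE).
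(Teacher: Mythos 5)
Your proposal is correct and uses essentially the same energy/Gronwall computation as the paper: diagonalize the symmetric part $a(t)$ to bound $\langle w,a(t)w\rangle\le L|w|^2$, use sub-multiplicativity of the Frobenius norm for the $b(t)$ term, and integrate. The only organizational difference is that you deduce the matrix bound from the vector bound by summing over the standard basis, whereas the paper runs the identical energy argument directly on $|\tilde y(s,t)|^2$ and notes the vector case is analogous; both are fine.
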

\begin{proof}
	We have the assertion because we can prove that $|\tilde{y}(s,t)|$ and $|\tilde{y}(s,t)v|^2$
	satisfy the assumption of Gronwall's inequality; that is,
	\begin{gather*}
		\frac{1}{2}
		\frac{d}{dt}
		|\tilde{y}(s,t)|^2
		\leq
			(L+M)
			|\tilde{y}(s,t)|^2,
		\qquad
		\frac{1}{2}
		\frac{d}{dt}|\tilde{y}(s,t)v|^2
		\leq
			(L+M)
			|\tilde{y}(s,t)v|^2.
	\end{gather*}

	Because we can prove these two inequalities in a similar way,
	we prove the first inequality only.
	Note that
	\begin{align*}
		\frac{1}{2}
		\frac{d}{dt}|\tilde{y}(s,t)|^2
		&=
			\langle a(t)\tilde{y}(s,t),\tilde{y}(s,t) \rangle
			+
			\langle b(t)\tilde{y}(s,t),\tilde{y}(s,t) \rangle.
	\end{align*}
	We write $\tilde{y}=\tilde{y}(s,t)$ for notational simplicity.
	Because $a(t)$ is diagonalizable by the orthogonal matrix $q(t)$
	and $\Lambda(t)\leq L \idMat$, we have
	\begin{align*}
		\langle a(t)\tilde{y},\tilde{y} \rangle
		=
			\langle q(t)^\top \Lambda(t)q(t)\tilde{y}, \tilde{y} \rangle
		=
			\langle \Lambda(t) q(t) \tilde{y},q(t)\tilde{y} \rangle
		\leq
			L|q(t)\tilde{y}|^2
		=
			L|\tilde{y}|^2.
	\end{align*}
	The boundedness of $b(t)$ implies
	$
		|\langle b(t)\tilde{y},\tilde{y} \rangle|
		\leq
			|b(t)\tilde{y}||\tilde{y}|
		=
			M|\tilde{y}|^2
	$.
	Combining them, we obtain the assertion and complete the proof.
\end{proof}

From \pref{prop_1510907899}, we see that the absolute values of the eigenvalues of $\tilde{y}(s,t)$
are less than or equal to $e^{(L+M)(t-s)}$ as follows.
Let $\lambda$ be an eigenvalue of $\tilde{y}(s,t)$
and $v$ be an eigenvector corresponding to $\lambda$ with $|v|=1$.
Then
$
	|\lambda|
	=
		|\langle	\lambda v,v\rangle|
	=
		|\langle \tilde{y}(s,t) v,v\rangle|
	\leq
		e^{(L+M)(t-s)}
$.

\section*{Acknowledgments}
The authors thank Professor Ryo Takada of Kyushu University for his valuable suggestion.
They are grateful to an anonymous referee for his/her helpful comments.
This work was supported by JSPS KAKENHI Grant Numbers JP17K14202 and JP17H06833.


\begin{thebibliography}{GRY08}

\bibitem[AGZ10]{AndersonGuionnetZeitouni2010}
Greg~W. Anderson, Alice Guionnet, and Ofer Zeitouni.
\newblock {\em An introduction to random matrices}, volume 118 of {\em
  Cambridge Studies in Advanced Mathematics}.
\newblock Cambridge University Press, Cambridge, 2010.

\bibitem[Bia95]{Biane1995}
Philippe Biane.
\newblock Permutation model for semi-circular systems and quantum random walks.
\newblock {\em Pacific J. Math.}, 171(2):373--387, 1995.

\bibitem[Chy06]{Chybiryakov2006}
Oleksandr Chybiryakov.
\newblock {\em Processus de {D}unkl et relation de {L}amperti}.
\newblock PhD thesis, University Paris 6, 2006.

\bibitem[CL97]{CepaLepingle1997}
Emmanuel C{\'e}pa and Dominique L{\'e}pingle.
\newblock Diffusing particles with electrostatic repulsion.
\newblock {\em Probab. Theory Related Fields}, 107(4):429--449, 1997.

\bibitem[CL01]{CepaLepingle2001}
Emmanuel C{\'e}pa and Dominique L{\'e}pingle.
\newblock Brownian particles with electrostatic repulsion on the circle:
  {D}yson's model for unitary random matrices revisited.
\newblock {\em ESAIM Probab. Statist.}, 5:203--224, 2001.

\bibitem[DM11]{DeMarco2011}
Stefano De~Marco.
\newblock Smoothness and asymptotic estimates of densities for {SDE}s with
  locally smooth coefficients and applications to square root-type diffusions.
\newblock {\em Ann. Appl. Probab.}, 21(4):1282--1321, 2011.

\bibitem[Dys62]{Dyson1962a}
Freeman~J. Dyson.
\newblock A {B}rownian-motion model for the eigenvalues of a random matrix.
\newblock {\em J. Mathematical Phys.}, 3:1191--1198, 1962.

\bibitem[FN95]{FloritNualart1995}
Carme Florit and David Nualart.
\newblock {A local criterion for smoothness of densities and application to the
  supremum of the Brownian sheet}.
\newblock {\em Statist. Probab. Lett.}, 22(1):25--31, 1995.

\bibitem[Fri64]{Friedman1964}
Avner Friedman.
\newblock {\em Partial differential equations of parabolic type}.
\newblock Prentice-Hall, Inc., Englewood Cliffs, N.J., 1964.

\bibitem[GM14]{GraczykMalecki2014}
Piotr Graczyk and Jacek Ma{\l}ecki.
\newblock Strong solutions of non-colliding particle systems.
\newblock {\em Electron. J. Probab.}, 19:no. 119, 21, 2014.

\bibitem[Gra99]{Grabiner1999}
David~J. Grabiner.
\newblock Brownian motion in a {W}eyl chamber, non-colliding particles, and
  random matrices.
\newblock {\em Ann. Inst. H. Poincar{\'e} Probab. Statist.}, 35(2):177--204,
  1999.

\bibitem[GRY08]{GraczykRoslerYor2008}
Piotr Graczyk, Margit R{\"o}sler, and Marc Yor, editors.
\newblock {\em Harmonic and Stochastic Analysis of Dunkl Processes}.
\newblock Hermann, 2008.

\bibitem[IW89]{IkedaWatanabe1989}
Nobuyuki Ikeda and Shinzo Watanabe.
\newblock {\em Stochastic differential equations and diffusion processes},
  volume~24 of {\em North-Holland Mathematical Library}.
\newblock North-Holland Publishing Co., Amsterdam-New York; Kodansha, Ltd., Tokyo, second edition, 1989.

\bibitem[Kat15]{Katori2015}
Makoto Katori.
\newblock {\em Bessel processes, {S}chramm-{L}oewner evolution, and the {D}yson model},
{SpringerBriefs in Mathematical Physics},
Springer
2015.

\bibitem[Kus17]{Kusuoka2017}
Seiichiro Kusuoka.
\newblock Continuity and {G}aussian two-sided bounds of the density functions
  of the solutions to path-dependent stochastic differential equations via
  perturbation.
\newblock {\em Stochastic Process. Appl.}, 127(2):359--384, 2017.

\bibitem[Mak16]{Makhlouf2016}
Azmi Makhlouf.
\newblock Representation and {G}aussian bounds for the density of {B}rownian
  motion with random drift.
\newblock {\em Commun. Stoch. Anal.}, 10(2):Article 2, 151--162, 2016.

\bibitem[Meh04]{Mehta2004}
Madan~Lal Mehta.
\newblock {\em Random matrices}, volume 142 of {\em Pure and Applied
  Mathematics (Amsterdam)}.
\newblock Elsevier/Academic Press, Amsterdam, third edition, 2004.

\bibitem[MT17]{MatsumotoTaniguchi2017}
Hiroyuki Matsumoto and Setsuo Taniguchi.
\newblock {\em Stochastic analysis}, volume 159 of {\em Cambridge Studies in
  Advanced Mathematics}.
\newblock Cambridge University Press, Cambridge, {J}apanese edition, 2017.
\newblock It\^o and Malliavin calculus in tandem.

\bibitem[Nag13]{Naganuma2013}
Nobuaki Naganuma.
\newblock Smoothness of densities of generalized locally non-degenerate
  {W}iener functionals.
\newblock {\em Stoch. Anal. Appl.}, 31(4):609--631, 2013.

\bibitem[NT17]{NgoTagichi2017}
Hoang-Long Ngo and Dai Taguchi.
\newblock {\em Semi-implicit Euler-Maruyama approximation for non-colliding particle systems,}
Preprint,
arXiv:1706.10119v2.

\bibitem[Nua06]{Nualart2006}
David Nualart.
\newblock {\em The {M}alliavin calculus and related topics}.
\newblock Probability and its Applications (New York). Springer-Verlag, Berlin,
  second edition, 2006.

\bibitem[Por90]{Portenko1990}
Nikola{\v{\i}}~I. Portenko.
\newblock {\em Generalized diffusion processes}, volume~83 of {\em Translations
  of Mathematical Monographs}.
\newblock American Mathematical Society, Providence, RI, 1990.
\newblock Translated from the Russian by H. H. McFaden.

\bibitem[PY81]{PitmanYor1981}
Jim Pitman and Marc Yor.
\newblock Bessel processes and infinitely divisible laws.
\newblock In {\em Stochastic integrals ({P}roc. {S}ympos., {U}niv. {D}urham,
  {D}urham, 1980)}, volume 851 of {\em Lecture Notes in Math.}, pages 285--370.
  Springer, Berlin, 1981.

\bibitem[R{\"o}s98]{Rosler1998}
Margit R{\"o}sler.
\newblock Generalized {H}ermite polynomials and the heat equation for {D}unkl
  operators.
\newblock {\em Comm. Math. Phys.}, 192(3):519--542, 1998.

\bibitem[RS93]{RogersShi1993}
L.~C.~G. Rogers and Zhan Shi.
\newblock Interacting {B}rownian particles and the {W}igner law.
\newblock {\em Probab. Theory Related Fields}, 95(4):555--570, 1993.

\bibitem[RY99]{RevuzYor1999}
Daniel Revuz and Marc Yor.
\newblock {\em Continuous martingales and {B}rownian motion}, volume 293 of
  {\em Grundlehren der Mathematischen Wissenschaften [Fundamental Principles of
  Mathematical Sciences]}.
\newblock Springer-Verlag, Berlin, third edition, 1999.

\bibitem[Shi04]{Shigekawa2004}
Ichiro Shigekawa.
\newblock {\em Stochastic analysis}, volume 224 of {\em Translations of
  Mathematical Monographs}.
\newblock American Mathematical Society, Providence, RI, 2004.
\newblock Translated from the 1998 Japanese original by the author, Iwanami
  Series in Modern Mathematics.

\bibitem[Yor80]{Yor1980a}
Marc Yor.
\newblock Loi de l'indice du lacet brownien, et distribution de
  {H}artman-{W}atson.
\newblock {\em Z. Wahrsch. Verw. Gebiete}, 53(1):71--95, 1980.

\end{thebibliography}
\end{document}